\title{Bayesian quadrature for 
%the weighted Sobolev space 
$\Hone$
with Poincaré inequality on a compact interval}
\author[1]{Olivier Roustant}
\author[2]{Nora L\"uthen}
\author[3]{Fabrice Gamboa}
\affil[1]{UMR CNRS 5219, Institut de Mathématiques de Toulouse, INSA, Université
de Toulouse, France}
\affil[2]{Chair of Risk, Safety, and Uncertainty Quantification, ETH Z\"urich,
% Stefano-Franscini-Platz 5, 
8093 Z\"urich, Switzerland}
\affil[3]{UMR CNRS 5219, Institut de Mathématiques de Toulouse, Université
de Toulouse, France}
\newcommand{\Ha}{\mathcal{H}}
\newcommand{\Hk}{\mathcal{H}_K}
\newcommand{\HkM}{\mathcal{H}_{K_M}}
\newcommand{\N}{\mathbb{N}}
\newcommand{\R}{\mathbb{R}}
\newcommand{\HH}[1]{H^{#1}(\mu)}
\newcommand{\Hone}{\HH{1}}
\newcommand{\Var}{\mathbb{V}\text{ar}}
\newcommand{\wce}{\mathrm{wce}}
\newcommand{\bound}{\mathcal{B}}
\numberwithin{equation}{section}
\theoremstyle{plain}
\newtheorem{thm}{Theorem}%[section]
\newtheorem{lem}{Lemma}%[section]
\newtheorem{defi}{Definition}%[section]
\newtheorem{prop}{Proposition}%[section]
\newtheorem{cor}{Corollary}%[section]
\newtheorem{rem}{Remark}%[section]
\newtheorem{example}{Example}
\newtheorem{assum}{Assumption}
\begin{document}

\maketitle

\begin{abstract}
Motivated by uncertainty quantification of complex systems, we aim at finding quadrature formulas of the form $\int_a^b f(x) d\mu(x) = \sum_{i=1}^n w_i f(x_i)$ where $f$ belongs to $\Hone$. Here, $\mu$ belongs to a class of continuous probability distributions on $[a, b] \subset \R$ and $\sum_{i=1}^n w_i \delta_{x_i}$ is a discrete probability distribution on $[a, b]$. 
We show that $\Hone$ is a reproducing kernel Hilbert space with a continuous kernel $K$, which allows to reformulate the quadrature question as a Bayesian (or kernel) quadrature problem. Although $K$ has not an easy closed form in general, we establish a correspondence between its spectral decomposition and the one associated to Poincaré inequalities, whose common eigenfunctions form a $T$-system \citep{karlin1966t}. 
The quadrature problem can then be solved in the finite-dimensional proxy space spanned by the first eigenfunctions.
The solution is given by a generalized Gaussian quadrature, which we call Poincaré quadrature.\\ 
We derive several results for the Poincaré quadrature weights and the associated worst-case error. 
When $\mu$ is the uniform distribution, the results are explicit: the Poincaré quadrature is equivalent to the midpoint (rectangle) quadrature rule. Its nodes coincide with the zeros of an eigenfunction % (as for the Gaussian quadrature of polynomials), 
and the worst-case error scales as $\frac{b-a}{2\sqrt{3}}n^{-1}$ for large $n$.
By comparison with known results for $H^1(0,1)$, this shows that the Poincaré quadrature is asymptotically optimal.
For a general $\mu$, we provide an efficient numerical procedure, based on finite elements and linear programming.
Numerical experiments provide useful insights: nodes are nearly evenly spaced, 
weights are close to the probability density at nodes, and the worst-case error is approximately $O(n^{-1})$ for large $n$. 

\paragraph{Keywords} Sobolev space, Bayesian quadrature, Poincaré inequality, Sturm-Liouville theory, Tchebytchev system ($T$-system), Gaussian quadrature. 
\end{abstract}

\tableofcontents

\section{Introduction}

\paragraph{Motivation.} This research is motivated by uncertainty quantification of complex systems, where a typical task is to compute integrals $I = \int G(x) dP(x)$. Here, $G$ is a multivariate function representing a quantity of interest of the system,
$x$ is a vector of $\R^d$ representing the input variables and $P$ is a probability distribution representing the uncertainty on $x$. In this context, the evaluation of $G$ is often time-consuming, and cubature formula may be preferred to sampling techniques to compute the integral $I$. Assuming that the input variables are independent, cubature formulas then boil down to $1$-dimensional quadrature formulas, by tensorization or using sparse grids. 

\paragraph{Problem considered.}
For a given interval $[a,b]$ of $\R$, we aim at finding accurate approximations of  integrals $\int_a^b f(x) d\mu(x)$ when $\mu$ is replaced by a discrete probability distribution. We thus consider quadrature formulas 
\begin{equation} 
\label{eq:quadratureDef}
\int_a^b f(x) d\mu(x) = \sum_{i = 1}^n w_i f(x_i),
\end{equation}
where, for $i=1,\cdots, n$, the quadrature nodes $x_i$ lie in $[a, b]$. The quadrature weights $(w_i)$ are non-negative and sum to $1$. We will denote by $X$ (resp. $w$) the sequence of nodes $(x_i)$ (resp. of weights $(w_i)$).
Considering minimal regularity conditions, we assume that $f$ belongs to the Sobolev space $\Hone = \{ f \in L^2(\mu), s.t. f' \in L^2(\mu) \}$, where the derivatives are defined in a weak sense. More generally, for an integer $p\geq 2$, we define the Sobolev space $\HH{p}$ as the subset of functions $f$ of $\HH{p-1}$ such that $f^{(p)}\in L^2(\mu)$. 
In the whole paper, we consider the usual norm of $H^1(\mu)$ defined by 
$\Vert f \Vert_{\Hone}^2 = \Vert f \Vert^2 + \Vert f' \Vert^2$, 
where $\Vert f \Vert^2 = \int_a^b f^2(x)d\mu(x)$
is the norm of $L^2(\mu)$.
For technical reasons, we assume that $\mu$ is a \emph{bounded perturbation} of the uniform distribution on $[a, b]$, meaning that it admits a continuous probabity density function $\rho$ that does not vanish on $[a,b]$. This includes a wide range of probability distributions used in practice, such as the truncated normal, obtained by conditioning a Gaussian variable to vary in a finite domain. This assumption implies that the sets $L^2(\mu), H^1(\mu)$ contain the same equivalence classes of functions than $L^2(a,b), H^1(a,b)$, associated to the uniform distribution on $[a,b]$, with equivalent norms.

\paragraph{Bayesian -- or kernel -- quadrature formulation.}
When $\mu$ is the uniform probability distribution, it is well known that $H^1(a,b)$ is a reproducing kernel Hilbert space (RKHS). 
Under the previous assumption on $\mu$, we will show that $H^1(\mu)$ is also a RKHS (Section~\ref{sec:PoincRKHS}). In that case, a suitable criterion to evaluate the accuracy of a quadrature $(X, w)$ is the worst-case error, defined by
$$ \wce(X, w, \mathcal{H}) = \sup_{h \in \mathcal{H}, \Vert h \Vert_{\mathcal{H}} \leq 1} \left\vert \int h(x) d\mu(x) - \sum_{i= 1}^n w_i h(x^i) \right\vert.$$
Here, $\mathcal{H}$ is some particular given functional space. Indeed, when $\mathcal{H}$ is a RKHS, $\wce(X, w, \mathcal{H})$ can be explicitly computed as a function of the kernel $K$ (Section~\ref{sec:kernel_quadrature}). 
An interesting  quadrature problem is so the following minimization problem:
$$ (P): \qquad \min_{X, w} \wce(X, w, \mathcal{H}),$$
where one wish to identify the minimizing quadrature. 
Such problem is often called kernel quadrature, or Bayesian quadrature, as the prior information is that the functions lie in the RKHS associated to the kernel $K$.

\paragraph{Originality of the problem.}
We remark that, apart from the case of the uniform distribution, the problem does not reduce to the more standard quadrature problem with a weight function
\begin{equation} \label{eq:quadWeight}
\int_a^b f(x) \rho(x) dx = \sum_{i=1}^n w_i f(x_i)
\end{equation}
where $f$ belongs to $H^1(a,b)$. 
Indeed, the unit balls 
$\{ h \in \Hone, \Vert h \Vert_{\Hone} \leq 1 \}$
and 
$\{ h \in H^1(a,b), \Vert h \Vert_{H^1(a,b)} \leq 1 \}$ 
are different if $\rho$ is not a constant function. Thus $\wce(X, w, \Hone) \neq \wce(X, w, H^1(a,b))$ and the weighted quadrature problem, formulated as a worst-case error minimization problem, will in general not give the same solutions as $(P)$.

\paragraph{Problem resolution in a finite-dimensional proxy space.}
A difficulty in our frame is that the kernel $K$ is in general not known explicitly. Thus $(P)$ cannot be solved directly. 
A key result of this paper is that there is a correspondence between the spectral decomposition of $K$ and the one associated to Poincar\'e inequalities. Furthermore, the common eigenfunctions form a Tchebytchev system ($T$-system, see \citep{karlin1966t}). This has two main consequences.
Firstly, one can compute numerically the spectral decomposition of $K$ with a finite element technique \citep{PoincInterval}. 
Secondly, the worst-case problem $(P)$ can be replaced by a tractable proxy problem 
$$ (P_M): \qquad \min_{X, w} \wce(X, w, \mathcal{H}_M)$$
where $\Hone$ has been replaced by its projection $\mathcal{H}_M$ onto the space spanned by the first $M$ eigenfunctions. Indeed, similarly to polynomials, $T$-systems admit a Gaussian quadrature and for a given number of nodes $n$, there exists a unique quadrature $(X, w)$ with positive weights for which  $\wce(X, w, K_M) = 0$, where $M=2n-1$ is maximal. We call this optimal quadrature \emph{Poincaré quadrature}.
For a general probability distribution $\mu$, the Poincaré quadrature is computed efficiently by linear programming.

\paragraph{Properties of the Poincaré quadrature.}
We derive several results for the connection between the kernel associated to $\Hone$, the Poincaré quadrature nodes and weights, and the associated worst-case error. 
When $\mu$ is the uniform distribution, the results are explicit (Section~\ref{sec:PoincQuadPropUnif}): the Poincaré quadrature is equal to the midpoint (rectangle) quadrature rule, its nodes coincide with the zeros of an eigenfunction, as for the Gaussian quadrature of polynomials, and the worst-case error scales as
$\frac{b-a}{2\sqrt{3}}n^{-1}$ for large $n$.
Furthermore, in the case of $H^1(0,1)$, the kernel is given explicitly, and it is possible to compute the optimal kernel quadrature for it and not only for its finite-dimensional approximation. The results obtained by \cite{these_duc_jacquet_1973} show that the optimal kernel quadrature has evenly space nodes and weights asymptotically equal to $\frac{1}{n}$, which shows that the Poincaré quadrature is asymptotically optimal.

In the general case, numerical experiments provide empirical insights (Section~\ref{sec:NumExp}): nodes are nearly evenly spaced, weights are close to the probability density at nodes, and the worst-case error is approximately proportional to $n^{-1}$ for large $n$. 

\paragraph{Links with literature.}
To the best of our knowledge, considering Sobolev spaces with a non-uniform probability distribution is new. As mentioned above, this does not boil down to a quadrature with weights for the uniform distribution, as the unit balls are different. The case of $H^1(0,1)$ (uniform case) has been studied by several authors, with with various choices of norms and weight functions (Equation~\ref{eq:quadWeight}).
For instance, \cite{zhang_novak_2019optimal} provide expressions of the radius of information (worst-case error for the optimal quadrature) in function of the nodes, for the semi-norm $\int_0^1 f'(x)^2 dx$
%\Vert f \Vert^2 = 
and centered weight functions.
For a constant weight function, and the usual norm of $H^1(0, 1)$ considered in the present paper,  \cite{these_duc_jacquet_1973} obtains the optimal kernel quadrature. The link between $T$-systems and kernel quadrature has been also exploited in \cite{oettershagenPhD2017}. There, the kernel is assumed to have an explicit form, and the $T$-system is obtained by considering the kernel function at nodes $K(x_i, .)$, which is different than our approach based on the spectral decomposition of $K$. The case of $H^1(0,1)$ is considered in their numerical experiments, but with a different norm associated to Bernoulli polynomials, given by $\Vert f \Vert^2 = (\int_0^1 f(x)dx)^2 + \int_0^1 f'(x)^2 dx$. 

\paragraph{Paper organization.}
Section~\ref{sec:background} gives the prerequisites on Poincaré inequalities, $T$-systems, RKHS and kernel quadrature. 
The analysis of the RKHS structure of $H^1(\mu)$ is done in Section~\ref{sec:PoincRKHS}, where a connection is established between Poincaré inequalities and the kernel of $\Hone$. 
Section~\ref{sec:PoincQuadPropGen} gives general formulas for the optimal quadrature weights and the associated worst-case error as a function of the kernel. Section~\ref{sec:PoincQuadPropUnif} focuses on the case of the uniform distribution. 
Section~\ref{sec:NumExp} presents numerical experiments in the general case. 
%Section~\ref{sec:discussion} concludes the paper.

%%%%%%%%%%%%%%%%%%%%
\section{Background}
%%%%%%%%%%%%%%%%%%%%
\label{sec:background}

In the whole paper, we consider a bounded interval of the real line $[a, b]$, with $-\infty < a < b< \infty$. 
We consider a probability distribution $\mu$ supported on $[a, b]$ which is a bounded perturbation of the uniform distribution, in the following sense.

\begin{defi}[Bounded perturbation of the uniform distribution]
Let $\mu$ be a continuous probability distribution on $[a,b]$, with density $\rho$. We say that $\mu$ is a \emph{bounded perturbation of the uniform distribution}
if $\rho$ is a positive continuous and piecewise $C^1$ function on $[a,b]$.
We denote by $\bound$ the set of bounded perturbations of the uniform distribution on $[a,b]$.\\
We also denote by $V = -\log(\rho)$ the so-called \emph{potential} associated to $\mu$. Equivalently, $\rho(t) = e^{-V(t)}.$
\end{defi}
{\bf Remarks}
\begin{itemize}
\item Obviously, if $\mu$ fulfils the previous definition, then $\rho$ is bounded from below and above by positive constants:
there exist $m, M$ in $\R$ such that
$$ \forall t \in [a,b], \qquad  0 < m \leq \rho(t) \leq M < +\infty.$$
\item When $\mu \in \bound$, it is straightforward that the sets $L^2(\mu), H^1(\mu)$ contain the same equivalence classes of functions than $L^2(a,b), H^1(a,b)$, associated to the uniform distribution on $[a,b]$, with equivalent norms.
\end{itemize}

%---------------------------------------------------------------------------
\subsection{Poincaré inequalities and basis}
%---------------------------------------------------------------------------

This section is based on \cite{PoincInterval} \cite[see also][]{BGL_book}.
Let $\mu$ be a probability distribution on $[a, b]$. For $f, g \in L^2(\mu)$, let $ \Vert f \Vert = \left( \int f^2 d\mu \right)^{1/2} $ be the usual norm, and $\langle f, g \rangle = \int fg d\mu$ the usual dot product. Denote by
$\Var_\mu(f)$ the variance of $f$:
$$ \Var_\mu(f) :=  \left\Vert f - \int f d\mu \right\Vert^2.$$
We first recall the notion of Poincar\'e inequality.

\begin{defi}[Poincar\'e inequality]
We say that $\mu$ verifies a Poincar\'e inequality if there exists a
finite constant $C$ such that for all $f \in \Hone$:
$$\Var_\mu(f) \leq C \Vert f' \Vert^2.$$
%where $f'$ is the weak derivative of $f$. 
In this case, the smallest possible constant $C$ above is denoted $C_P(\mu)$, and is called Poincaré constant of $\mu$.
\end{defi}

When it exists, the Poincar\'e constant is obtained by minimizing the so-called Rayleigh ratio $J(f) = \frac{\Vert f' \Vert^2}{\Vert f \Vert^2}$ over all centered functions of $\Hone$.
An important result is that a bounded perturbation of the uniform distribution admits a Poincar\'e inequality, which is related to a spectral decomposition:

\begin{thm}[Spectral theorem] %, \citet{PoincInterval}, Theorem 2]
\label{thm:spectralThm}
Let $\mu$ be a probability distribution in $\bound$.
Consider the following problems:
\begin{itemize}
\item[(P1)] $\textrm{Find } f \in \Hone \textrm{ s.t. } \quad
J(f) = \frac{\Vert f' \Vert^2}{\Vert f \Vert^2} \quad \textrm{ is minimum under} \quad \int f d\mu=0.$ 
\item[(P2)] $\textrm{Find } f \in \Hone \textrm{ s.t. } \quad
\langle f', g' \rangle = \lambda \langle f, g \rangle \quad \forall g \in \Hone.$ 
\item[(P3)] $\textrm{Find } f \in H^2(\mu) \textrm{ s.t. } \quad f'' - V'f' = - \lambda f \quad \textrm{ and } \quad f'(a) = f'(b) = 0.$ 
\end{itemize}
Then the eigenvalue problems (P2) and (P3) are equivalent, and their eigenvalues form an increasing sequence $(\lambda_m)_{m \geq 0}$ of non-negative real numbers that tends to infinity. They are all simple, and $\lambda_0 = 0$. The eigenvectors $(\varphi_m)_{m \geq 0}$ form a Hilbert basis of $L^2(\mu)$, and $\varphi_0$ is a constant function.\\
Furthermore when $\lambda = \lambda_1$, the first positive eigenvalue, $(P2)$ and $(P3)$ are equivalent to $(P1)$ and the minimum of $(P1)$ is attained for $f = \varphi_1$. 
Thus $C_P(\mu)= 1/\lambda_1$.
\end{thm}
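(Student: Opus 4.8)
The plan is to recognize (P3) as a Sturm--Liouville problem in divergence form and to apply the abstract spectral theorem for compact self-adjoint operators, reserving one-dimensional ODE arguments for the finer properties. Throughout I would use that $\mu \in \bound$ makes $\rho = e^{-V}$ continuous and bounded between positive constants, and makes the norms of $L^2(\mu), \Hone$ equivalent to those of $L^2(a,b), H^1(a,b)$. The starting observation is that $\rho' = -V'\rho$, so the operator in (P3) is in divergence form,
$$ f'' - V'f' = \frac{1}{\rho}\,(\rho f')', $$
and the strong equation $f'' - V'f' = -\lambda f$ reads $(\rho f')' = -\lambda\rho f$, while (P2) written with densities is $\int_a^b \rho f'g'\,dx = \lambda \int_a^b \rho f g\,dx$ for all $g \in \Hone$. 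To get (P3)$\Rightarrow$(P2) I would integrate by parts, the boundary term $[\rho f'g]_a^b$ vanishing by the Neumann conditions $f'(a)=f'(b)=0$. For the converse I would first test against $g \in C_c^\infty(a,b)$ to obtain $(\rho f')' = -\lambda\rho f$ in the distributional sense; since the right-hand side lies in $L^2$, $\rho f' \in H^1(a,b)$, and dividing by the continuous, piecewise $C^1$, non-vanishing $\rho$ gives $f' \in H^1(a,b)$, i.e. $f \in \HH{2}$. Integrating by parts against a general $g \in \Hone$ then reproduces $[\rho f'g]_a^b$, and choosing $g$ nonzero at the endpoints forces $f'(a)=f'(b)=0$. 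I expect this regularity bootstrap with a merely piecewise-$C^1$ potential, together with the recovery of the Neumann conditions as \emph{natural} boundary conditions, to be the main technical obstacle.

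For the discrete spectrum I would introduce the solution operator associated with the coercive inner product $\langle f,g\rangle_{\Hone} = \langle f,g\rangle + \langle f',g'\rangle$: by Lax--Milgram, each $h \in L^2(\mu)$ admits a unique $u = Th \in \Hone$ with $\langle u,g\rangle_{\Hone} = \langle h,g\rangle$ for all $g$. Then $T: L^2(\mu) \to L^2(\mu)$ is self-adjoint and positive, and it is compact because the embedding $\Hone \hookrightarrow L^2(\mu)$ is compact (Rellich on $[a,b]$, transported through the norm equivalence valid for $\mu \in \bound$). The spectral theorem for compact self-adjoint operators then yields an orthonormal basis $(\varphi_m)$ of eigenvectors; since $Th = 0$ implies $\langle h,g\rangle = 0$ for all $g$, hence $h=0$, the operator is injective and the basis is complete in $L^2(\mu)$. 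Writing the eigenvalues of $T$ as $\tau_m \downarrow 0$, the pairs $(\varphi_m,\lambda_m)$ with $\lambda_m = \tau_m^{-1}-1$ solve (P2); the $\lambda_m$ increase to $+\infty$ and are non-negative because $\langle f',f'\rangle = \Vert f'\Vert^2 \geq 0$.

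It remains to pin down $\lambda_0$ and simplicity. Constant functions belong to $\Hone$ and satisfy (P2) with $\lambda=0$, so together with non-negativity this gives $\lambda_0 = 0$ with $\varphi_0$ constant. Simplicity is the genuinely one-dimensional ingredient: for fixed $\lambda$, the equation $(\rho f')' = -\lambda\rho f$ is a second-order linear ODE whose coefficients are regular enough (Carath\'eodory, as $\rho$ is positive and piecewise $C^1$) for its initial value problem to be uniquely solvable; the single condition $f'(a)=0$ then leaves a one-dimensional solution set, so each eigenspace has dimension at most one and every $\lambda_m$ is simple.

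Finally, for the link with (P1), a centered $f \in \Hone$ satisfies $\int f\,d\mu = \langle f,\varphi_0\rangle = 0$, hence expands as $f = \sum_{m\geq 1} c_m\varphi_m$. Testing (P2) against eigenfunctions gives $\langle \varphi_m',\varphi_k'\rangle = \lambda_m\delta_{mk}$, so (with the expansion converging in $\Hone$) $\Vert f'\Vert^2 = \sum_{m\geq 1}\lambda_m c_m^2$ and $\Vert f\Vert^2 = \sum_{m\geq 1} c_m^2$, whence
$$ J(f) = \frac{\sum_{m\geq 1}\lambda_m c_m^2}{\sum_{m\geq 1} c_m^2} \geq \lambda_1, $$
with equality exactly when $f$ is proportional to $\varphi_1$. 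Thus the minimum in (P1) equals $\lambda_1$, is attained at $\varphi_1$ (the eigenfunction produced by (P2)--(P3) at $\lambda=\lambda_1$), and since $\Var_\mu(f) = \Vert f\Vert^2$ for centered $f$, the inequality $\Var_\mu(f) \leq C\Vert f'\Vert^2$ holds with optimal constant $C = 1/\lambda_1$, i.e. $C_P(\mu) = 1/\lambda_1$.
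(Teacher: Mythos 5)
The paper does not prove Theorem~\ref{thm:spectralThm} itself: it is recalled from the cited reference, whose argument (visible in the proofs of Propositions~\ref{prop:PoincareAndSL} and~\ref{prop:MercerH1}, which invoke the weak formulation $\langle f,g\rangle_{\Hone}=\beta\langle f,g\rangle$ and the associated compact self-adjoint solution operator) is exactly the route you take. Your proof is correct and essentially matches that standard approach --- Lax--Milgram plus Rellich for the discrete spectrum, the divergence form $(\rho f')'=-\lambda\rho f$ with an integration-by-parts/bootstrap argument for the equivalence of (P2) and (P3), ODE uniqueness for simplicity, and the Rayleigh-quotient expansion for (P1) --- and you correctly identify the only delicate point, namely carrying out the regularity bootstrap and recovering the natural Neumann conditions when $\rho$ is merely positive, continuous and piecewise $C^1$.
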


In this paper, our interest is in the whole spectral decomposition. In particular, we define the Poincar\'e basis of $L^2(\mu)$ as follows.

\begin{defi}[Poincar\'e basis]
Let $\mu$ be a probability distribution in $\bound$. We call Poincar\'e basis an orthonormal basis formed by eigenfunctions $(\varphi_m)_{m \geq 0}$ of the spectral theorem (Theorem~\ref{thm:spectralThm}).
As all eigenvalues are simple, a Poincar\'e basis is unique up to a sign change for each eigenfunction. We set $\varphi_0 = 1$.\\
\end{defi}

We conclude this section by a link to the Sturm-Liouville theory of second-order differential equations.

\begin{prop} \label{prop:PoincareAndSL}
Let $\mu$ be a probability distribution in $\bound$.
Then the Poincar\'e basis consists of the eigenfunctions of the Sturm-Liouville eigenproblem
\begin{equation}  \label{eq:PoincSturmLiouville}
L(f)(x) = \beta r(x) f(x)
\end{equation}
with Neumann conditions $f'(a)=f'(b)=0$, where $L(f)(x) = - (p(x)f'(x))' + q(x) f(x)$ and $q = r = p = e^{-V}.$
Furthermore, the Sturm-Liouville problem is regular, in the sense that all eigenvalues are  positive.
\end{prop}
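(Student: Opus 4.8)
The plan is to recognize that the differential equation in problem (P3) of Theorem~\ref{thm:spectralThm} is, after multiplication by the density $\rho = e^{-V}$, exactly the Sturm--Liouville equation~\eqref{eq:PoincSturmLiouville}, up to an explicit shift of the eigenvalue. Since Theorem~\ref{thm:spectralThm} already establishes that the Poincaré basis $(\varphi_m)$ consists of the $H^2(\mu)$-solutions of (P3) with Neumann conditions $f'(a) = f'(b) = 0$, it will suffice to identify the two equations and to check that the boundary conditions coincide (they are both Neumann, so this is immediate).

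First I would expand the Sturm--Liouville operator for the prescribed choice $p = q = r = \rho = e^{-V}$. Writing $L(f) = -(\rho f')' + \rho f$ and using $\rho' = -V'\rho$, one gets $-(\rho f')' = -\rho' f' - \rho f'' = \rho(V'f' - f'')$, hence
\[ L(f) = \rho\left(-f'' + V'f' + f\right). \]
The eigenproblem $L(f) = \beta r f = \beta \rho f$ then becomes, after dividing by the strictly positive factor $\rho$,
\[ -f'' + V'f' + f = \beta f, \qquad \text{i.e.} \qquad f'' - V'f' = -(\beta - 1) f. \]
This is precisely the equation of (P3) with $\lambda = \beta - 1$. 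As the Neumann conditions are shared by both formulations, the two eigenproblems have the same eigenfunctions, namely the Poincaré eigenfunctions, and their eigenvalues are related by $\beta_m = \lambda_m + 1$.

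For the regularity assertion I would simply invoke this correspondence: Theorem~\ref{thm:spectralThm} gives $\lambda_m \geq 0$ for all $m$ (with $\lambda_0 = 0$), so the Sturm--Liouville eigenvalues satisfy $\beta_m = \lambda_m + 1 \geq 1 > 0$, which is the claimed positivity.

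I do not expect a genuine obstacle here: the result is essentially a restatement of the same eigenproblem in divergence form, and the only points requiring care are bookkeeping the shift $\beta = \lambda + 1$ and noting that the division by $\rho$ is legitimate because $\rho > 0$ by the definition of $\bound$. The mildest subtlety is that $\rho$ is only piecewise $C^1$, so $p = \rho$ is not everywhere differentiable; however, since the eigenfunctions are furnished by Theorem~\ref{thm:spectralThm} as bona fide $H^2(\mu)$ solutions of (P3), the passage to the form $-(\rho f')'$ holds in the weak sense and globally as an $L^2(\mu)$ identity, so no additional regularity argument is needed.
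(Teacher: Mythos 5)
Your proposal is correct and follows essentially the same route as the paper: both identify the Sturm--Liouville equation with the ODE of (P3) via multiplication/division by the nonvanishing density $e^{-V}$, track the eigenvalue shift $\beta = \lambda + 1$, and deduce positivity from $\lambda_m \geq 0$. The only cosmetic difference is that you start from the strong form (P3) while the paper passes through the weak formulation $\langle f, g \rangle_{\Hone} = \beta \langle f, g \rangle$ cited from the Poincaré-inequality reference; your remark on the piecewise-$C^1$ regularity of $\rho$ is a welcome extra point of care.
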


\begin{proof}
Recall that $\langle f, g \rangle_{\Hone} = \langle f, g \rangle + \langle f', g' \rangle$. From the proof of Theorem 2 in \cite{PoincInterval}, the eigenfunctions of the Poincaré operator are solutions of the spectral problem: to find $f \in \Hone$ and $\beta$ such that for all $g \in \Hone$, 
\begin{equation} \label{eq:PoincWeakForm1}
\langle f, g \rangle_{\Hone} = \beta \langle f, g \rangle.
\end{equation}
The corresponding eigenvalues are $\beta_m = 1+\lambda_m$. 
In particular, $\beta_m > 0$, as $\lambda_m \geq 0$. 
Moreover, Problem (\ref{eq:PoincWeakForm1}) is equivalent to the second order differential equation 
$$ f''(x) - V'(x)f'(x) - f(x) = - \beta f(x) $$
with Neumann conditions $f'(a) = f'(b) = 0$  \cite[see also][proof of Theorem 2]{PoincInterval}. 
Multiplying by $e^{-V(x)}$, which by definition of $\bound$ does not vanish, we obtain the equivalent Sturm-Liouville form (\ref{eq:PoincSturmLiouville}).
\end{proof}

%---------------------------------------------
\subsection{Quadrature with T-systems}
%---------------------------------------------
\label{sec:1Dquadrature}

This section is based on \cite{karlin1966t}.

\begin{defi}[T-systems, generalized polynomials] 
Let $(u_n)_{n \in \N}$ be a family of real-valued continuous functions defined on a compact interval $[a, b] \subset \R$. We say that $(u_n)$ is a \emph{complete Tchebytchev system}, or simply \emph{T-system}, if for all integer $n \geq 1$ and for all sequence of distinct points $a \leq t_1 < \dots < t_n \leq b$, the determinant of the \emph{generalized Vandermonde matrix} 
$$ V(u_0, \dots, u_{n-1}; t_1, \dots, t_n) :=  
\begin{pmatrix}
u_0(t_1) & \dots & u_0(t_n)  \\
\vdots & \ddots & \vdots \\
u_{n-1}(t_1) & \dots & u_{n-1}(t_n) \\
\end{pmatrix}
$$
is positive. The finite linear combinations of the $u_n$'s are called \emph{generalized polynomials} or \emph{u-polynomials}.
\end{defi}

A prototype of $T$-system on any interval of the real line is given by the polynomial functions $u_i(t) = t^i$, and $V$ is equal to the Vandermonde determinant $\det V(1, t, \dots, t^{n-1}, t_1, \dots, t_n) = \prod_{1 \leq i < j \leq n} (t_j - t_i)$ 
\cite[see e.g.][page 1]{karlin1966t}. 
An equivalent definition of $T$-systems (up to a sign change) is that any generalized polynomial, i.e., any linear combination of $u_0, u_1, \dots, u_n$, has at most $n$ zeros 
\cite[][Theorem 4.1.]{karlin1966t}.
This extends the property that (ordinary) polynomials of degree $n$ have at most $n$ zeros.
In that sense, $T$-systems can be viewed as a generalization of polynomials, which justifies the name {\it generalized polynomials}.\\

In the context of quadrature problems, the definition of $T$-systems guarantees that for any set of distinct quadrature nodes in $[a, b]$, there exists a unique set of quadrature weights such that the quadrature formula (\ref{eq:quadratureDef})
is exact at order $n-1$, i.e. for all functions in  $\text{span}(u_0, \dots, u_{n-1})$. Indeed, up to reordering, the equations above define a linear system whose matrix is invertible and equal, up to a sign change, to $V(u_0, \dots, u_{n-1}; t_1, \dots, t_n$). This quadrature formula thus generalizes the Newton-Cotes quadrature of polynomials, and suffers in general from the same drawback: the weights can be negative and the resulting quadrature formula can be instable.\\

Interestingly, extending the Gaussian quadrature of ordinary polynomials to more general functions, $T$-systems admit a unique quadrature that has positive weights and is exact at order $2n-1$. Contrarily to the polynomial case, however, the nodes of this quadrature do in general not coincide with the zeros of a (generalized) polynomial. The computation of the nodes and weights uses a different approach, relying on geometry. More precisely, consider the \emph{moment space}:
\begin{multline} \label{eq:MomentSpace}
    \mathcal{M}_{n+1} = \bigg\{ c \in \mathbb{R}^{n+1},  \textrm{ with } c_i = \int_a^b u_i(t) d\sigma(t), \textrm{ for all } i=0, \dots, n, \\
  \text{where $\sigma$ is a finite measure with support $[a,b]$} 
    \bigg\}.
\end{multline}
It can be shown that the moment space is a closed convex set. Then, we have the announced result.

\begin{prop} \label{prop:GaussQuadTsystem}
Let $u=(u_n)_{n \in \N}$ be a $T$-system, and $\mu$ be a probability distribution in $\bound$.
Then, for all $n \in \N$, the vector $c := \left( \int_a^b u_i(t) d\mu(t) \right)_{0 \leq i \leq 2n-1}$ is an interior point of $\mathcal{M}_{2n}$, and
there exists a unique quadrature  (\ref{eq:quadratureDef}) with positive weights which is exact at order $2n-1$ (i.e. exact on the vector space spanned by $u_0, \dots, u_{2n-1}$) and uses a minimal number of nodes, which is equal to $n$.
Its nodes are all in the open interval $(a, b)$ and its weights sum to $1$.\\
It coincides with the Gaussian quadrature when the $T$-system is formed by polynomials $u_i(t) = t^i$.\\
Furthermore, this quadrature is obtained by solving the minimization problem 
\begin{equation} \label{eq:RepRec}
    \underset{\sigma \in V_{2n-1}(c)}{\min} \int_a^b u_{2n}(t) d\sigma(t)
\end{equation}
over the set $V_{2n-1}(c)$ of probability distributions subject to moment conditions $\int_a^b u_i(t) d\sigma(t) = c_i$, for $i=0, 1, \dots, 2n-1$.
\end{prop}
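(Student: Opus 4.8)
The plan is to prove Proposition \ref{prop:GaussQuadTsystem} by reducing it to the classical theory of Gaussian quadrature for $T$-systems as developed in \cite{karlin1966t}, since the statement is essentially a specialization of that theory to measures $\mu \in \bound$. The main work is to verify the single nontrivial hypothesis that the classical theorems require, namely that the moment vector $c = \left( \int_a^b u_i(t)\, d\mu(t) \right)_{0 \leq i \leq 2n-1}$ is an \emph{interior} point of the moment space $\mathcal{M}_{2n}$; once this is established, existence, uniqueness, minimality of the node count, the location of nodes in the open interval, and the variational characterization \eqref{eq:RepRec} all follow from Karlin--Studden's results on principal representations.

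First I would establish the interiority of $c$. The key observation is that $\mu \in \bound$ has a density $\rho$ that is bounded below by a positive constant $m > 0$, so $\mu$ is not supported on finitely many points; in particular its support is all of $[a,b]$, and hence $\mu$ "sees" arbitrarily many distinct nodes. The geometric characterization of the boundary of the moment space (from \cite{karlin1966t}) is that $c$ lies on the boundary of $\mathcal{M}_{2n}$ precisely when every representing measure is supported on too few points, i.e. when $c$ admits an \emph{essentially unique} representation whose number of support points (counted with the appropriate index convention) is strictly less than the generic value. Since $\mu$ itself is a representing measure with infinite support, it cannot be one of these boundary-type principal representations, which forces $c$ into the interior. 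I would make this rigorous by invoking the dimension/rank condition: because the $u_i$ form a $T$-system, for any $2n$ distinct points the generalized Vandermonde matrix is nonsingular, so the moments of the continuous measure $\mu$ cannot satisfy the degenerate determinantal (Hankel-type) relations that cut out the boundary.

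Having placed $c$ in the interior of $\mathcal{M}_{2n}$, the second step is to invoke the existence and uniqueness of the \emph{lower principal representation} from the Karlin--Studden theory: an interior moment point of a $T$-system admits exactly two principal representations (the lower and upper), each using the minimal number of nodes, with the lower principal representation having all its nodes in the open interval $(a,b)$ and positive masses, and being exact for $u_0, \dots, u_{2n-1}$. This is the generalized Gaussian quadrature, and its uniqueness among positive-weight rules exact to order $2n-1$ with $n$ nodes is part of the classical statement. The fact that the weights sum to $1$ follows immediately from exactness applied to $u_0$, since $\varphi_0 = u_0$ is constant (equal to $1$) and $\int u_0\, d\mu = 1$ because $\mu$ is a probability measure. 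The reduction to the ordinary Gaussian quadrature in the polynomial case $u_i(t) = t^i$ is then just the classical fact that principal representations of the power moments are Gauss--Christoffel quadratures.

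Finally, for the variational formula \eqref{eq:RepRec}, I would argue that minimizing $\int_a^b u_{2n}(t)\, d\sigma(t)$ over the affine slice $V_{2n-1}(c)$ of probability measures matching the first $2n$ moments selects exactly the lower principal representation: this is the standard extremal characterization of principal representations as the measures attaining the minimum (resp. maximum) of the next moment functional $c_{2n} = \int u_{2n}\, d\sigma$ over all representing measures, a consequence of the supporting-hyperplane description of $\mathcal{M}_{2n+1}$ above $\mathcal{M}_{2n}$. I expect the \textbf{interiority verification in the first step} to be the main obstacle: it is the only place where the hypothesis $\mu \in \bound$ (rather than an arbitrary finitely-supported measure) is genuinely used, and care is needed to translate the continuity and strict positivity of $\rho$ into the statement that $c$ avoids the boundary stratum of the moment body. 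The remaining steps are essentially bookkeeping and direct citations of \cite{karlin1966t}.
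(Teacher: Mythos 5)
Your overall strategy coincides with the paper's: both proofs reduce everything to the Karlin--Studden theory of principal representations, the only substantive work being the verification that $c$ is an interior point of $\mathcal{M}_{2n}$; the existence and uniqueness of the lower principal representation, the location of its $n$ nodes in $(a,b)$, the extremal characterization \eqref{eq:RepRec}, and the reduction to Gauss quadrature in the polynomial case are then direct citations in both versions. Where you genuinely differ is in how interiority is established. The paper uses the dual characterization (Lemma 9.2 of Karlin--Studden): $c$ is interior if and only if $\int_a^b g\,d\mu>0$ for every nonzero nonnegative $u$-polynomial $g=\sum_{i=0}^{2n-1}a_iu_i$, which follows in three lines from the continuity and strict positivity of $\rho$. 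You argue on the primal side instead: boundary points of the moment body admit an essentially unique representing measure of deficient index, whereas $\mu$ is a representing measure with full support, so $c$ cannot lie on the boundary. That argument is valid and arguably more geometric, but the way you propose to make it rigorous --- via degenerate Hankel-type determinantal relations --- only makes sense for the power $T$-system $u_i(t)=t^i$; for a general $T$-system the boundary of $\mathcal{M}_{2n}$ is not cut out by such relations, and the correct way to close your argument is to invoke the uniqueness of the representing measure at boundary points (Karlin--Studden, Chapter II) and contradict the infinite support of $\mu$. The paper's dual argument avoids this detour and is also where the hypothesis $\mu\in\bound$ (strict positivity of $\rho$) enters most transparently.

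One small inaccuracy to fix: you justify $\sum_i w_i=1$ by asserting that $u_0$ is the constant function $1$. That is not part of the definition of a $T$-system (the order-one Vandermonde condition only forces $u_0>0$ on $[a,b]$); it holds for the Poincar\'e basis, which is the intended application, but not for the general $u$ of the Proposition. The normalization instead comes from the fact that the minimization \eqref{eq:RepRec} is taken over probability measures, so the lower principal representation returned by Theorem 1.1 of Karlin--Studden is itself a probability measure and its weights sum to $1$ by construction.
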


\begin{proof}[Proof of Proposition~\ref{prop:GaussQuadTsystem}]
The proof is based on different results given in \cite{karlin1966t}, that we will refer to. 
Let us denote $N=2n-1$ the quadrature order.\\
First, from Lemma 9.2., page 65, $c$ is an interior point of $\mathcal{M}_N$ if and only if for all non-zero $u$-polynomial $g = \sum_{i=0}^N a_i u_i$ such that $g(t) \geq 0$ for all $t$ in $[a, b]$, then $m_g = \sum_{i=0}^N a_i c_i >0$. Clearly, $m_g = \int_a^b g d\mu$ and thus $m_g \geq 0$. Assume that $m_g=0$. Let us write $m_g = \int_a^b g(x) \rho(x) dx$. As $g, \rho$ are continuous and non-negative, it implies that $g \rho = 0$ on $[a, b]$. As $\rho = e^{-V}$ is non-vanishing on $[a, b]$, it implies that $g$ is identically zero on $[a, b]$, which is contradictory. Finally $m_g > 0$, which shows that $c$ is an interior point of $\mathcal{M}_N$.\\
Now, follow \cite{karlin1966t}, \S 3, case (ii), page 46.
They use the notion of index of a sequence of nodes, defined by the number of nodes in $[a, b]$, with a half weight for the nodes equal to the endpoints $a, b$ (if any). Then, as $N$ is odd, they show that there are exactly two quadratures with positive weights and the smallest possible index, equal to $(N+1)/2 = n$. These quadrature are called \emph{principal representations} in this context. For one of them, called \emph{upper principal representation}, 
the nodes include the endpoints, and thus the quadrature involves $n+1$ nodes formed by the endpoints $a, b$ and $n-1$ nodes in $(a, b)$.
The other one, called \emph{lower principal representation}, involves $n$ nodes in the open interval $(a,b)$. Thus, it is the only quadrature with positive weights and containing the smallest number of nodes, equal to $n$.\\
Furthermore, it is shown in Theorem 1.1. page 80, that, if $u$ is a $T$-system, the solution of (\ref{eq:RepRec}) is unique and equal to the lower representation of $u$. In particular, the weights are positive and sum to one.\\
Finally, if $u_i(t)=t^i$, we have equality with the Gaussian quadrature by uniqueness of the quadrature, since the Gaussian quadrature has $n$ distinct nodes in $(a,b)$, positive weights summing to one, and is exact for $u_0, \dots, u_{2n-1}$ \citep[][Chapter IV]{karlin1966t}.
\end{proof}

\begin{rem}
Replacing the minimization problem in (\ref{eq:RepRec}) by maximization, we obtain another valid quadrature of oder $2n-1$, called \emph{upper principal representation}. However, it involves one more node, i.e. $n+1$ nodes, including the endpoints $a, b$. Equivalently, for a fixed number $n$ of nodes, this quadrature has order $2n-3$, compared to $2n-1$ for the Gaussian quadrature. It generalizes the Lobatto quadrature for polynomials. 
\end{rem}

\begin{defi}[Gaussian quadrature for $T$-systems]
\label{def:GaussQuadTsystem}
The unique quadrature of Prop.~\ref{prop:GaussQuadTsystem}
is called \emph{lower principal representation} of $u$.
By analogy with polynomials, and following \cite{oettershagenPhD2017},  we will  call it generalized Gaussian quadrature, or simply \emph{Gaussian quadrature} of the $T$-system $(u_n)_{n \in \N}$. \end{defi}

We now show that a Poincaré basis is a $T$-system. This is an immediate consequence of the example of eigenfunctions of Sturm-Liouville problems 
\cite[see e.g. Example 7 in][]{karlin1966t}.
A proof can be found in \cite{oscillationBook}. However, it is not easy to read as it is split in several parts. We recall below the main steps and give a roadmap for the interested reader.

\begin{prop} \label{prop:PoincareTsystem}
Consider the notations and the assumption of Theorem~\ref{thm:spectralThm}. Then the Poincaré basis $(\varphi_m)_{m \in \N}$ is a $T$-system.
\end{prop}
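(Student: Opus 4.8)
The plan is to identify the $\varphi_m$ as the eigenfunctions of a strictly totally positive integral kernel (an \emph{oscillation kernel} in the sense of Gantmacher--Krein) and then invoke the structural theorem that the eigenfunctions of such a kernel form a Markov system, which contains the $T$-system property as a special case.

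First I would pass, via Proposition~\ref{prop:PoincareAndSL}, to the regular Sturm--Liouville eigenproblem $L(f) = \beta\, r f$ with Neumann conditions $f'(a)=f'(b)=0$, where $L(f) = -(p f')' + q f$ and $p=q=r=e^{-V}>0$, whose eigenvalues are the simple numbers $\beta_m = 1+\lambda_m$ with $1 = \beta_0 < \beta_1 < \cdots$. Since $p,q>0$, the form $\int_a^b p (f')^2\,dx + \int_a^b q f^2\, dx$ is positive definite, so $L$ with Neumann conditions is invertible and admits a continuous Green's function $G$. The eigenproblem then becomes the integral equation $\varphi_m(x) = \beta_m \int_a^b G(x,y)\, r(y)\, \varphi_m(y)\, dy$, and after the symmetrizing substitution $\widetilde G(x,y) = \sqrt{r(x)}\, G(x,y)\, \sqrt{r(y)}$ the functions $\varphi_m$ (weighted by $\sqrt r$) become the eigenfunctions of the continuous symmetric kernel $\widetilde G$ with positive simple eigenvalues $1/\beta_m$, ordered decreasingly in $m$.

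The crux is to show that $\widetilde G$ is an oscillation kernel. Here I would use the classical one-term representation $G(x,y) = \kappa\, u(\min(x,y))\, v(\max(x,y))$, where $u$ solves $L u = 0$ with $u'(a)=0$, $v$ solves $L v = 0$ with $v'(b) = 0$, and $\kappa > 0$ is fixed by the constant weighted Wronskian $-p(u'v-uv')$. Because $q \ge 0$, the equation $L f = 0$ is disconjugate on $[a,b]$, so the principal solutions $u, v$ may be taken strictly positive; this gives $G>0$, and the standard determinantal evaluation for Green's functions of this split form yields strict positivity of every minor on increasing tuples of points, i.e. the oscillation-kernel property. Invoking then the Gantmacher--Krein theorem, the eigenfunctions of $\widetilde G$ form a Markov system: for each $n$ the family $(\varphi_0,\dots,\varphi_{n-1})$ is a $T$-system on $[a,b]$, and $\varphi_m$ has exactly $m$ nodal zeros in $(a,b)$. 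Since $\varphi_0 = 1$ and no $\varphi_m$ can vanish at an endpoint (as $f(a)=f'(a)=0$ forces $f\equiv 0$), the generalized Vandermonde determinants $V(\varphi_0,\dots,\varphi_{n-1};t_1,\dots,t_n)$ are nonzero for all increasing tuples in $[a,b]$; normalizing the sign of each $\varphi_m$ makes them positive, which is exactly the $T$-system property (equivalently, every nonzero $u$-polynomial in $\varphi_0,\dots,\varphi_{n-1}$ has at most $n-1$ zeros, cf. Theorem 4.1 of \cite{karlin1966t}).

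I expect the main obstacle to be the verification that $\widetilde G$ is a genuine oscillation kernel, i.e. the strict total positivity of all its minors. This rests on the disconjugacy of $L$ and, more delicately, on handling the Neumann endpoints: the relevant principal solutions there have vanishing \emph{derivative} rather than vanishing \emph{value}, so the positivity of $u, v$ and the sign of the Wronskian must be argued directly rather than quoted from the Dirichlet case. Assembling the several determinantal inequalities of \cite{oscillationBook} into the single oscillation-kernel statement---and thence into the Markov (hence $T$-system) conclusion---is precisely the part that is scattered across that reference.
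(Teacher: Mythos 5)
Your proposal follows essentially the same route as the paper's proof: reduce via Proposition~\ref{prop:PoincareAndSL} to the regular Sturm--Liouville problem, rewrite the eigenproblem as an integral equation whose kernel is the Green's function in single-pair form $\psi(\min(x,y))\chi(\max(x,y))$, establish that this kernel is oscillatory in the Gantmacher--Krein sense, and invoke their theorem that eigenfunctions of an oscillatory kernel form a Markov (hence $T$-) system. The paper's proof is exactly this three-step roadmap through \cite{oscillationBook}, so your argument is a correct, somewhat more detailed rendering of the same approach.
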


\begin{proof}
By Prop.~\ref{prop:PoincareAndSL}, the eigenfunctions of the Poincaré operator are eigenfunctions of the regular Sturm-Liouville problem (\ref{eq:PoincSturmLiouville}).
Then the result is a particular case of a more general result stating that the eigenfunctions of a regular Sturm-Liouville operator form a $T$-system. A proof can be found in \cite{oscillationBook}. We give here the three main steps, and pointers to the corresponding sections.\\
%item 
    Firstly, it is proved in IV.10.4, (pages 236 -- 238), that the eigenfunctions of a Sturm-Liouville operator under boundary constraints verify an integral equation of the form
    \begin{equation} \label{eq:SturmLiouville_IntEq}
    \varphi(x) = \lambda \int_a^b K(x,s) \varphi(s) d\sigma(s)
    \end{equation}
    where $\lambda >0$, $\sigma$ is a probability distribution, and $K$ is the so-called Green function of $L$, defined by:
    $$ K(x,s) = \psi(\min(x,s)) \chi(\max(x, s)),$$
    where $\psi, \chi$ are particular solutions of the homogeneous equations $L(f)=0$, such that $\frac{\psi}{\chi}$ is a non-decreasing function.\\
    %item
    Secondly, it is proved that $K$ is an \emph{oscillatory kernel}, in the sense of\footnote{see also Definition 1' page 179, and definitions 4 and 6, pages 74 and 76.} Definition 1 page 178. Roughly speaking, it means that every matrix extracted from $K$ \emph{in ascending order}, i.e. $(K(x_i, s_j))_{1 \leq i, j \leq n}$ with $x_1 < \dots < x_n$ and $s_1 < \dots < s_n$ is positive semidefinite.
    The proof starts at page 78 (Example 5, `Single-pair' matrices), continues at page 103 (Theorem 12), page 220 (Criterion A) and ends at page 238 (Theorem 16).\\
    %\item 
    Thirdly, if $K$ is an oscillatory kernel, then the solutions of the integral equation (\ref{eq:SturmLiouville_IntEq}) form a $T$-sytem, which is proved in Theorem 1, page 181.
\end{proof}

%-------------------------------
\subsection{Kernel quadrature}
\label{sec:kernel_quadrature}
%-------------------------------
\paragraph{RKHS.}
We first recall some facts on \emph{reproducing kernel Hilbert spaces} (RKHS), refering to \cite{berlinetRKHSbook} for more details.
For a given set $T$, let $\Ha$ be a Hilbert space of functions $T \to \R$, with norm $\Vert . \Vert$. 
We say that $\Ha$ is a \emph{RKHS} if for all $x \in T$, the evaluation functions 
$h \in \Ha \mapsto h(x)$ are continuous. 
It can be shown that a RKHS is in bijection with a semi-definite positive function, also called \emph{kernel}. 
If $K$ is a kernel associated to $\Ha$, we write $\Ha= \Hk$. 
The RKHS $\Hk$ is characterized by the so-called \emph{reproducing property}
$$ \forall x \in T, \forall h \in \Hk, \qquad 
\langle K(x, .), h \rangle = h(x).$$
In particular, choosing $h = K(y, .)$, we get 
$$ \forall x, y \in T, \qquad 
\langle K(x, .), K(y, .) \rangle = K(x, y).$$

\paragraph{Worst-case error in RKHS.}
In RKHS, worst-case quantities of linear functionals can be computed explicitly. 
Indeed, for instance, the Cauchy-Schwartz inequality gives for all $f \in \Ha$:
$$ \vert h(x) \vert = \vert \langle K(x, .), h \rangle \vert
\leq \Vert K(x, .) \Vert \Vert h \Vert,$$
from which it is deduced immediately $ \sup_{h \in \mathcal{H}_K, \Vert h \Vert \leq 1} \vert h(x) \vert = \Vert K(x, .) \Vert$. Furthermore, by the reproducing property, we have $\Vert K(x, .) \Vert^2 = K(x, x)$, and finally
$$ \sup_{h \in \mathcal{H}_K, \Vert h \Vert \leq 1} \vert h(x) \vert = \sqrt{K(x,x)}.$$
A similar computation can be done for linear functionals defined by quadrature formulas. Let us first define the worst-case error of a general quadrature.

\begin{defi}[worst-case error of a quadrature]
Let $(X,w)$ be a quadrature composed of a set of nodes $X = (x_1, \dots, x_n) \in [a,b]^n$ and a set of weights $w= (w_1, \dots, w_n) \in \R^n$. Let $\Ha$ be a set of functions on $[a,b] \to \R$. The worst-case error of $(X, w)$ on $\Ha$ is defined by
$$ \wce(X, w, \Ha) = \sup_{h \in \Ha, \Vert h \Vert \leq 1} \left\vert \int h(x) d\mu(x) - \sum_{i= 1}^n w_i h(x_i) \right\vert.$$
If $\Ha$ is a RKHS $\Hk$, we simply denote $\wce(X, w, K) = \wce(X, w, \Hk)$. 
\end{defi}

By a direct extension of the computation above, we have
$$ \wce(X, w, K) =  
\left\Vert \int K(x, .) d\mu(x) - \sum_{i = 1}^n w_i K(x_i, .) \right\Vert_{\mathcal{H}_K}.$$ 
Using the reproducing property, one obtains the analytical expression:
$$ \wce(X, w, K)^2 = \iint K(x, x')d\mu(x) d\mu(x') - 2 \sum_{i = 1}^n w_i \int K(x_i, x) d\mu(x) + \sum_{i, j} w_i w_j K(x_i, x_j) $$
which can be rewritten in the matricial form
\begin{equation} \label{eq:wceMatricial}
\wce(X, w, K)^2 = w^\top K(X, X) w - 2 \ell_K(X)^\top w + c_K   
\end{equation}
where $K(X,X) = (K(x_i, x_j))_{1 \leq i,j \leq n}$ is the Gram matrix, $\ell_K(X) = (\int K(x_i, x) d\mu(x))_{1 \leq i \leq n}$ is the column vector formed by the primitive function of the kernel at $x_i$ and $c_K = \iint K(x, x') d\mu(x)d\mu(x')$ is a constant.\\

\paragraph{Kernel quadrature.}
A kernel quadrature is obtained by minimizing the worst-case error. We need the following assumption:
\begin{assum} \label{state:GramInvert}
The Gram matrix $K(X,X)$ is invertible when the elements of $X$ are all different. 
\end{assum}
\noindent Under Assumption~\ref{state:GramInvert}, for a given set of nodes $X$ formed by different nodes, then (\ref{eq:wceMatricial}) defines a strictly convex function. Thus, it has a unique minimum, denoted by $w^\star(X, K)$. By solving the first order conditions, we immediately get the exact expression of the vector of optimal weights:
\begin{equation} \label{eq:optimalWeight}
w^\star(X, K) = K(X, X)^{-1} \ell_K(X).    
\end{equation}
After some algebra, we get the corresponding minimal value for the worst case error:
\begin{equation} \label{eq:wceOptimal}
wce(X, w^\star, K)^2 = c_K - \ell_K(X)^\top K(X, X)^{-1} \ell_K(X).
\end{equation}
Kernel quadrature and optimal kernel quadratures can then be defined as follows.
\begin{defi}[Kernel quadrature, optimal kernel quadrature]
Let $X$ be a set of nodes, and assume that Assumption~\ref{state:GramInvert} is verified. Then, the \emph{kernel quadrature} associated to $X$ on $\Hk$ is the quadrature $(X, w^\star(X, K))$ that minimizes the worst-case error $\wce(X, w, K)$ over all sets of weights in $\R^n$. \\
An \emph{optimal kernel quadrature}, if it exists, is a quadrature $(X, w)$ that minimizes the worst-case error $\wce(X, w, K)$ among all quadratures $(X, w)$, or equivalently, that minimizes $\wce(X, w^\star(X, w), K)$ over all sets of nodes $X$.
\end{defi}

\begin{rem}
Notice that the weights of a kernel quadrature are not constrained to be positive, and not constrained to sum to $1$.
\end{rem}

%%%%%%%%%%%%%%%%%%%%%%%%%%%%%%%%%%%%%%%%%%%%%%%%%%%%%%%%%%%%%%%%%%%%%%%
\section{Spectral decomposition of $H^1(\mu)$ with the Poincaré basis}
\label{sec:PoincRKHS}
%%%%%%%%%%%%%%%%%%%%%%%%%%%%%%%%%%%%%%%%%%%%%%%%%%%%%%%%%%%%%%%%%%%%%%%
We show our main result: when $\mu$ is a bounded perturbation of the uniform distribution, then $H^1(\mu)$ is a RKHS whose kernel eigenfunctions coincide with the Poincaré basis. We illustrate this on two examples where explicit computations can be made.

\subsection{Main result}
\begin{prop}[Mercer's representation of $H^1(\mu)$ with the Poincaré basis]
\label{prop:MercerH1}
Assume that $\mu$ is a probability distribution in $\bound$ with support $[a,b]$, and denote by $(\lambda_m, \varphi_m)_{m \in \N}$ the eigenvalues and (normalized) eigenfunctions of the Poincaré operator. Define $\alpha_m = (1+\lambda_m)^{-1}$.
Then $H^1(\mu)$, with its usual Hilbert norm 
$\Vert f \Vert_{H^1(\mu)}^2 = \Vert f \Vert^2 + \Vert f' \Vert^2$, 
is a RKHS. 
Its kernel $K$ is continuous on $[a,b]^2$ and verifies $\int_a^b K(x, y) d\mu(y) = 1$ for all $x \in [a,b]$. 
Its Mercer's decomposition is written
\begin{equation} \label{eq:H1kerMercer}
K(x, y) = \sum_{m=0}^{\infty} \alpha_m \varphi_m(x) \varphi_m(y), \end{equation}
where the convergence is uniform on $[a,b]^2$. Furthermore, $K$ can be computed as
\begin{equation} \label{eq:singlePairKernel}
    K(x, y) = \frac{1}{C} \psi(\min(x, y)) \chi(\max(x, y)),
\end{equation}
where $\psi, \chi$ are two linearly independent solutions of the homogeneous equation $f'' - f'V' -f = 0$ such that $\psi'(a)=0$ and $\chi'(b)=0$, and 
$C = \chi(b) \int_a^b \psi(x) d\mu(x) = \psi(a) \int_a^b \chi(y) d\mu(y)$ is a normalization constant.
\end{prop}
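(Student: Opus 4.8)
The plan is to proceed in three stages: first establish that $\Hone$ is a RKHS, then identify the (rescaled) Poincaré basis as an orthonormal basis of $\Hone$ to obtain the Mercer series, and finally recognise the reproducing kernel as the Green's function of the differential operator attached to the $\Hone$ inner product, which simultaneously delivers the closed form~(\ref{eq:singlePairKernel}) and the continuity needed to upgrade the series to uniform convergence.

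First I would show that $\Hone$ is a RKHS. Since $\mu \in \bound$, the space $\Hone$ coincides with $H^1(a,b)$ with an equivalent norm, and in dimension one $H^1(a,b)$ embeds continuously into $C([a,b])$: every $f \in \Hone$ has an absolutely continuous representative with $|f(x)-f(y)| \le \Vert f' \Vert_{L^2(a,b)}\,|x-y|^{1/2}$. Hence each evaluation functional $f \mapsto f(x)$ is bounded, so $\Hone$ is a RKHS with a reproducing kernel $K$. Next, to obtain~(\ref{eq:H1kerMercer}), I would exploit the weak form $\langle \varphi_m, g \rangle_{\Hone} = (1+\lambda_m)\langle \varphi_m, g\rangle$ already recorded in the proof of Proposition~\ref{prop:PoincareAndSL}. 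Taking $g=\varphi_n$ gives $\langle \varphi_m, \varphi_n\rangle_{\Hone} = (1+\lambda_m)\delta_{mn}$, so the functions $\sqrt{\alpha_m}\,\varphi_m$ form an orthonormal system in $\Hone$. Completeness follows because if $h\in\Hone$ is $\Hone$-orthogonal to every $\varphi_m$, the same weak form yields $(1+\lambda_m)\langle h,\varphi_m\rangle=0$, hence $\langle h,\varphi_m\rangle=0$ for all $m$ (as $1+\lambda_m>0$), and completeness of $(\varphi_m)$ in $L^2(\mu)$ forces $h=0$. For any orthonormal basis $(e_m)$ of a RKHS one has $K(x,\cdot)=\sum_m e_m(x)\,e_m$ in RKHS norm and thus $K(x,y)=\sum_m e_m(x)e_m(y)$ pointwise; with $e_m=\sqrt{\alpha_m}\,\varphi_m$ this is~(\ref{eq:H1kerMercer}). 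The normalisation $\int K(x,y)\,d\mu(y)=1$ is then the reproduction of the constant $\varphi_0=1$, i.e. $\langle K(x,\cdot),1\rangle_{\Hone}=\int K(x,y)\,d\mu(y)=1$.

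For the closed form~(\ref{eq:singlePairKernel}), I would characterise $k:=K(x,\cdot)$ by the reproducing identity $\langle k,h\rangle_{\Hone}=\int kh\,d\mu+\int k'h'\,d\mu=h(x)$ for all $h\in\Hone$. Integrating the second term by parts against $d\mu=e^{-V}\,dy$ on $[a,x]$ and $[x,b]$ separately shows that $k$ must solve the homogeneous equation $f''-V'f'-f=0$ on each subinterval, satisfy the Neumann conditions $k'(a)=k'(b)=0$ (from the endpoint boundary terms, valid for arbitrary $h(a),h(b)$), be continuous at $y=x$, and exhibit a prescribed jump of its derivative there producing the unit mass. This is exactly the Green's function construction, whose solution is the single-pair form $\frac{1}{C}\psi(\min(x,y))\chi(\max(x,y))$ with $\psi'(a)=0$, $\chi'(b)=0$. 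Existence and linear independence of $\psi,\chi$ — equivalently $C\neq 0$ — follow because any nontrivial solution of $f''-V'f'-f=0$ satisfying both Neumann conditions would be an eigenfunction of (P3) with eigenvalue $\lambda=-1$, contradicting $\lambda_m\ge 0$ from Theorem~\ref{thm:spectralThm} and Proposition~\ref{prop:PoincareAndSL}. The Wronskian identity $(\psi\chi'-\chi\psi')e^{-V}\equiv\text{const}$ then fixes the jump and hence $C$, while imposing $\int K(x,\cdot)\,d\mu=1$ at $x=a$ and at $x=b$ yields the two displayed expressions for $C$ (which must coincide, both equalling the single constant $C$). Continuity of $K$ on $[a,b]^2$ is then immediate from that of $\psi,\chi$.

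The main obstacle is the uniform convergence claim in~(\ref{eq:H1kerMercer}). I would obtain it by combining the previous stages: the partial sums $K_N(x,x)=\sum_{m\le N}\alpha_m\varphi_m(x)^2$ are continuous and increase pointwise to the continuous function $K(x,x)$, so Dini's theorem gives uniform convergence on the diagonal; the off-diagonal estimate $|K(x,y)-K_N(x,y)|\le\big(\sum_{m>N}\alpha_m\varphi_m(x)^2\big)^{1/2}\big(\sum_{m>N}\alpha_m\varphi_m(y)^2\big)^{1/2}$ from Cauchy--Schwarz then propagates uniform convergence to all of $[a,b]^2$. The delicate point is the logical ordering: Dini requires continuity of $K$ on the diagonal, which is cleanest to extract from the Green's-function stage, so I would establish the closed form \emph{before} completing the uniform-convergence argument, rather than invoking Mercer's theorem abstractly (which would demand the same continuity as a hypothesis).
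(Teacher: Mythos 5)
Your proof is correct, and it takes a genuinely different route from the paper's in several places. The paper establishes the RKHS property by citing the known kernel of $H^1(a,b)$ plus norm equivalence, identifies the spectral problem $\langle f,g\rangle_{\Hone}=\beta\langle f,g\rangle$ with the integral equation for the Hilbert--Schmidt operator of $K$ and with the regular Sturm--Liouville problem, and then imports the single-pair Green's function form and the continuity of $K$ from the oscillation-theory literature before invoking Mercer's theorem for uniform convergence. You instead (i) get the RKHS property directly from the one-dimensional Sobolev embedding, (ii) obtain the series \eqref{eq:H1kerMercer} from the orthonormal basis $e_m=\sqrt{\alpha_m}\,\varphi_m$ of $\Hone$ --- precisely the route the paper relegates to a Remark because it only yields pointwise convergence --- (iii) derive the single-pair form by a hands-on Green's function construction (integration by parts, Neumann endpoint conditions, derivative jump fixed by the Wronskian identity $(\psi\chi'-\chi\psi')e^{-V}=\mathrm{const}$), with a clean argument that $\psi,\chi$ are independent since $\lambda=-1$ cannot be an eigenvalue, and (iv) replace the appeal to Mercer's theorem by an explicit Dini-plus-Cauchy--Schwarz upgrade from pointwise to uniform convergence. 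Your version is more self-contained and elementary, and you correctly identify the one logical subtlety (continuity of $K$ must be secured from the closed form \emph{before} running Dini), which is exactly the deficiency the paper's Remark points out about the basis route; the paper's version is shorter but leans on three external references. The only point you pass over quickly is the regularity bootstrap needed to integrate by parts against $K(x,\cdot)\in H^1$ (one must first argue that the weak identity forces $K(x,\cdot)\in H^2$ on each subinterval), but this is standard and the paper glosses over the same step by citation.
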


\begin{proof} Under the assumptions on $\mu$, $L^2(\mu)=L^2(a,b)$ with an equivalent norm, and $\Hone = H^1(a,b)$ with an equivalent norm.
Now, it is well known that $H^1(a,b)$ is a RKHS, with an explicit kernel 
\cite[see e.g.][Example 1.4]{atteia}.
Thus, for all $x \in [a,b]$, the evaluation $f \in H^1(a,b) \mapsto f(x)$ is continuous. By equivalence of the norms, $f \in \Hone \to f \in H^1(a,b)$ is continuous. Hence by composition, $f \in \Hone \to f(x)$ is continuous. This shows that $\Hone$ is a RKHS. Let us denote by $K$ its kernel.

The link between $K$ and the Poincaré inequality is visible through the bilinear form $a(f, g) = \langle f, g \rangle_{\Hone}$. Consider the spectral problem: to find $f \in \Hone$ and $\beta$ such that for all $g \in \Hone$, 
\begin{equation} \label{eq:PoincWeakForm}
a(f, g) = \beta \langle f, g \rangle
\end{equation}
From \cite{PoincInterval} [Theorem 2 and its proof] under the assumptions on $\mu$, there exists a countable sequence of solutions, which is given by $\beta_m = 1+\lambda_m$ and $\varphi_m$ $(m \in \N)$. Notice that $\beta_m >0$ for all $m\in \N$.
Furthermore $\beta_m, \varphi_m$ are defined in an unique way (up to a change sign of the eigenfunctions) because the eigenvalues are simple and the eigenfunctions have norm $1$.\\
Now, since $\Hone$ is a RKHS, the functions $K(x, .)$ are dense in $\Hone$ ($x \in [a,b]$). Thus, Problem (\ref{eq:PoincWeakForm}) is equivalent to:
$$\forall x \in [a,b], \quad a(f, K(x, .)) = \beta \langle f, K(x, .) \rangle.$$
By the reproducing property, $a(f, K(x, .)) = \langle f, K(x, .) \rangle_{\Hone} = f(x)$. Hence, (\ref{eq:PoincWeakForm}) is equivalent to:
find $f \in \Hone$ and $\beta$ such that
\begin{equation} \label{eq:PoincGreen}
\forall x \in [a,b], \quad f(x) = \beta \int_a^b K(x, y) f(y) d\mu(y),
\end{equation}
which is equivalent to the spectral decomposition of the Hilbert-Schmidt operator associated to $K$ (recall that $\beta > 0$).\\
Moreover, by Prop.~\ref{prop:PoincareAndSL} and its proof, 
Problem (\ref{eq:PoincWeakForm}) is equivalent to the regular Sturm-Liouville problem~(\ref{eq:PoincSturmLiouville}).
Thus, from \cite[][Section 10, pages 234-238]{oscillationBook}, we also obtain that the solution of (\ref{eq:PoincSturmLiouville}) is equivalent to the solution of (\ref{eq:PoincGreen}).
In this context, $K$ is called \emph{Green function}. But this point of view gives more details, and tells that $K$ is equal to
\begin{equation*} %\label{eq:GreenKerForm}
    K(x, y) = \frac{1}{C} \psi(\min(x, y)) \chi(\max(x, y)),
\end{equation*}
where $\psi, \chi$ 
are two linearly independent solutions of the homogeneous equation $L(f) = 0$ such that $\psi'(a)=0$ and $\chi'(b)=0$ 
\cite[][section 7]{oscillationBook}.
The constant $C$ is determined such that 
$\int_a^b K(x,y) d\mu(y) = 1$ for all $x \in [a, b]$. Indeed, as the constant function $1$ belongs to $H^1(\mu)$, the RKHS reproducing property gives, for all $x \in [a,b]$:
$$ 1 = 1(x) = \langle 1, K(x, .) \rangle_{H^1(\mu)} = \int_a^b K(x, y) d\mu(y). $$
For instance, choosing $y=b$ or $x=a$, we obtain
$C = \chi(b) \int_a^b \psi(x) d\mu(x) = \psi(a) \int_a^b \chi(y) d\mu(y)$.\\
Now $\psi$ and $\chi$ are continuous, as elements of $\Hone$ (whose functions are equal to those of $H^1(a, b)$). As $\min, \max$ are continuous functions, we obtain, by composition, that $K$ is continuous on $[a,b]^2$. Hence by Mercer's theorem 
\cite[see e.g.][]{berlinetRKHSbook}, 
$K$ is written in terms of the solutions of \eqref{eq:PoincGreen} as
\begin{equation} \label{eq:kH1ortho}
K(x, y) = \sum_{m \in \N} \alpha_m \varphi_m(x) \varphi_m(y),
\end{equation}
with $\alpha_m = \frac{1}{\beta_m} = \frac{1}{1+\lambda_m}$, and the convergence is uniform on $[a,b]^2$.
\end{proof}

\begin{rem}
We mention another way to obtain the Mercer's representation of $K$. A property of the Poincar\'e basis is that it is an orthogonal basis of $\Hone$ with $\Vert \varphi_m \Vert_{\Hone}^2 = 1 + \lambda_m = 1/\alpha_m$ \citep{SparsePoincareChaos}.
Thus, the functions $e_m = \sqrt{\alpha_m} \varphi_m$ ($m \geq 0$) define an orthonormal basis of $H^1(\mu)$.
Then, representation  \eqref{eq:kH1ortho} is obtained with the usual representation of a kernel in a separable RKHS \citep{berlinetRKHSbook}:
$$
    K(x,y) = \sum_{m \in \N} \alpha_m \varphi_m(x) \varphi_m(y) = \sum_{m \in \N} e_m(x) e_m(y).
$$
However, by this way, the convergence is a priori only pointwise, and it is not clear whether $K$ is continuous on $[a,b]^2$. Thus, another argument has been used here, coming from the Green's function point of view, to prove the kernel continuity.
\end{rem}

\begin{rem}
At first look, it may be surprising that the Neumann conditions $f'(a) = f'(b) = 0$ do not appear in the RKHS, whereas all basis functions $\varphi_m$ satisfy it (while being dense in $L^2(\mu)$). Actually, it is not difficult to see that any function $f$ of $H^1(\mu)$ can be approximated by a function $f_\epsilon \in H^1(\mu)$ that verifies the Neumann condition, simply by truncating $f$ on $[a+\epsilon, b - \epsilon]$ and extending it continuously by a constant on $[a, a+\epsilon]$ and $[b-\epsilon, b]$. As functions of $H^1(\mu)$ are continuous on the compact interval $[a, b]$ (still under our assumption on $\mu$), the approximation error can be made as small as wanted. 
\end{rem}

\subsection{Examples}
\label{sec:MercerH1examples}
\begin{example}[Case of the uniform distribution]
\label{example:uniform_kernel}
Let $\mu$ be the uniform distribution on 
$ [a, b]$. Then $L$ is the Laplacian operator, and the spectral problem is written
$$ \varphi''(x) = - \lambda \varphi(x) \qquad \forall x \in [a,b],$$
with Neumann conditions $\varphi'(a) = \varphi'(b)=0$.
The solutions are given by $\lambda_0 = 0, \varphi_0 = 1$
and for $m \geq 1$,
$$ \lambda_m = m^2 \omega^2, \qquad \varphi_m(x) = \sqrt{2} \cos(m \omega (x - a)), $$
with $\omega = \pi/(b-a)$.
The kernel of $H^1(\mu)$ (with its usual norm) has been obtained by \cite{these_duc_jacquet_1973} in the 70's. English-written proofs can be found in \citet{atteia}, Example 1.4, or \citet{thomasAgnan_H1_kernel}). The kernel is written
$$K(x,y) = \frac{b-a}{\sinh(b-a)}
\,\,
\cosh [\min(x, y) - a] 
\, 
\cosh [b - \max(x, y)] 
$$
where $\cosh, \sinh$ denote the hyperbolic functions: $\cosh(x) = \frac{e^x + e^{-x}}{2}$, $\sinh(x) = \frac{e^x - e^{-x}}{2}$. Applying Prop.\ref{prop:MercerH1}, we deduce that for all $(x, y) \in [a, b]^2$ such that $x \leq y$,
$$ K(x, y) = \frac{\pi / \omega}{\sinh(\pi / \omega)}
\,\,
\cosh(x - a)
\, 
\cosh(b - y)  = 1 +
2 \sum_{m=1}^{+\infty} \frac{1}{1 + m^2 \omega^2} \cos[m\omega(x-a)]
\cos[m\omega(y-a)].
$$
As a by-product, we can derive the value of some `shifted' Riemann series. For instance, from $x = y = a$ and $x=a, y=b$ and using $r=1/\omega$, we get the formulas (with $\tanh(x) = \sinh(x)/\cosh(x)$), valid for all $r>0$:
$$ \sum_{n=1}^{+\infty} \frac{1}{n^2 + r^2} = \frac{1}{2r^2} \left(\frac{ \pi r}{\tanh(\pi r)} - 1 \right), \qquad
\sum_{n=1}^{+\infty} \frac{(-1)^{n-1}}{n^2 + r^2}  = \frac{1}{2r^2} \left(1 - \frac{ \pi r}{\sinh(\pi r)} \right).$$
It can be shown that these formulas are also valid when $r$ tends to zero. The limit case gives the well-known expression at $s=2$ of the Riemann and Dirichlet eta functions: $\zeta(2)=\pi^2/6$ and $\eta(2) = \pi^2/12$.
\end{example}
\bigskip
In addition to the standard space $H^1(a,b)$ associated to the uniform distribution, the kernel of $H^1(\mu)$ and its Mercer's representation can also be made explicit in the case of the truncated exponential distribution.
\begin{example}[Truncated exponential distribution]
Consider the exponential distribution, truncated on $[a, b] \subseteq \R^+$: $\displaystyle \frac{d\mu}{dx} = \exp(-V(x)) = \frac{e^{-x}}{e^{-a}-e^{-b}} \mathbb{1}_{[a, b](x)}$. 
Notice that $V'(x) = 1$ on $[a, b]$, leading to linear differential equations with constant coefficients. 
Following \citet[Proof of Proposition 5]{PoincInterval}, 
the spectral problem
$$ \varphi'' - \varphi' = - \lambda \varphi $$
with Neumann conditions $\varphi'(a) = \varphi'(b) = 0$, admits the solutions, $\lambda_0=0, \varphi_0 \equiv 1$ and for $m \geq 1$,
\begin{equation} \label{eq:truncExpSpectr}
\lambda_m = \frac{1}{4} + (m \omega)^2, \qquad
\varphi_m(x) = c_m e^{x/2}(2 m \omega \cos(m \omega (x - a)) - \sin(m \omega (x-a)))
\end{equation}
where $\omega = \pi/(b-a)$ and $c_m$ is a normalizing constant ensuring that $\varphi_m$ has $L^2(\mu)$ norm $1$, equal to:
$$ c_m = \left( \frac{e^{-a} - e^{-b}}{b-a} \frac{1}{2 \lambda_m} \right)^{1/2}.$$
From Prop.~\ref{prop:MercerH1} and following \cite{oscillationBook}, the Green function associated to this spectral problem can be computed by considering the linear homogeneous equation
$$ \varphi'' - \varphi' - \varphi = 0.$$
The solutions are spanned by 
$e^{r_0 x}, e^{r_1 x}$, 
where $r_0 = \frac{1-\sqrt{5}}{2}$ 
and $r_1 = \frac{1+\sqrt{5}}{2}$.
Solutions $\psi, \chi$ satisfying one-sided Neumann conditions $\psi'(a) = 0$ and $\chi'(b) = 0$ are given by
\begin{equation}
\psi(x) = (r_1 e^{r_1 a}) e^{r_0 x} - (r_0 e^{r_0 a})e^{r_1 x}, 
\qquad
\chi(x) = (r_1 e^{r_1 b}) e^{r_0 x} - (r_0 e^{r_0 b})e^{r_1 x}.
\end{equation}
Finally, the normalization constant is given by $C = \int_a^b \psi(x) \chi(b) d\mu(x)$.  After some algebra, we obtain
$$ C = \frac{(e^{r_0 a + r_1 b} - e^{r_0 b + r_1 a})(r_1 - r_0)}{e^{-a} - e^{-b}}. $$
Finally the kernel of $H^1(\mu)$ is given explicitly by its single-pair form (\ref{eq:singlePairKernel}) 
or by its Mercer's representation (\ref{eq:H1kerMercer}), where $\alpha_m = (1+\lambda_m)^{-1}$ and $\lambda_m, \varphi_m$ are given by (\ref{eq:truncExpSpectr}).
\end{example}

%%%%%%%%%%%%%%%%%%%%%%%%%%%%%%%%%%%%%%%%%%%%
\section{Poincaré quadrature and optimal kernel quadrature in $H^1(\mu)$}
%%%%%%%%%%%%%%%%%%%%%%%%%%%%%%%%%%%%%%%%%%%%
\label{sec:PoincQuadPropGen}

%-----------------------
%\subsection{Definitions}
%-----------------------
Proposition~\ref{prop:MercerH1} 
shows that the spectral decomposition associated to Poincaré inequalities is in correspondence with the spectral decomposition of the kernel of $H^1(\mu)$, viewed as a RKHS. 
However, the kernel of $H^1(\mu)$ is in general not directly available, which makes optimal kernel quadrature in $\Hone$ intractable.
This motivates us to focus on the quadratures defined from the Poincaré basis, as eigenfunctions of $H^1(\mu)$.

\subsection{Definitions and notations}

\begin{defi}[Poincaré quadrature]
We call \emph{Poincaré quadrature} the Gaussian quadrature of the $T$-system of the Poincaré basis of $\mu$, as defined in Definition~\ref{def:GaussQuadTsystem}. We denote it $(X_P, w_P)$.
\end{defi}

We now establish a connection between the Poincaré quadrature and the kernel quadrature spanned by the Poincaré basis functions. To reach this goal, we first check that these kernel quadratures are properly defined, by showing that Assumption~\ref{state:GramInvert} is verified both for $K$ and its finite-dimensional approximation $K_M$, defined below. 

\begin{defi}[Finite-dimensional kernel for $H^1(\mu)$]
Let $M$ a non-zero integer, and consider the notations of Proposition~\ref{prop:MercerH1} and its assumptions. We set 
\begin{equation} \label{eq:K_M}
K_M = \sum_{m=0}^M \alpha_m \varphi_m \otimes \varphi_m
\end{equation}
the truncated Mercer's representation of the kernel of $H^1(\mu)$. \end{defi}

\begin{prop}[Invertibility of the Gram matrix for truncated Mercer's representation of $H^1(\mu)$] 
\label{prop:GramInvertH1trunc}
Assumption~\ref{state:GramInvert} is verified for $K_M$ when $X$ contains at most $M+1$ distinct points.
\end{prop}
\begin{proof}
Let $x_0, \dots, x_n$ a set of distinct points with $n \leq M$.
For $i=0, \dots, n$, denote $e_i = K_M(x_i, .) \in H^1(\mu)$. Then $K_M(x_i, x_j) = \langle e_i, e_j \rangle_{H^1(\mu)}$. Thus $K_M(X, X)$ is the Gram matrix of the vectors $e_i$ for the dot product in $H^1(\mu)$. By a classical result, it is invertible if and only if $e_0, e_1, \dots, e_n$ are linearly independent. It is enough to prove that $e_0, e_1, \dots, e_M$ are linearly independent, since $n \leq M$.
Now, by definition, we have
$$ e_i = K_M(x_i, .) = \sum_{m=0}^M A_{i,m} \varphi_m $$
with $A_{i, m} = \alpha_m \varphi_m(x_i)$. Hence, if $e = (e_1, \dots, e_M)^\top$ and $\varphi = (\varphi_0, \dots, \varphi_M)^\top$ are column vectors whose elements are in $H^1(\mu)$, then we have  $e = A \varphi$.
Remember that the $\varphi_m$'s are linearly independent ($m \geq 0)$, as they form a basis of $L^2(\mu)$. Thus the coordinates of $e$ are linearly independent if and only if $A$ is invertible. Remarking that the $m^{\text{th}}$ column of $A$ is proportional to the vector $(\varphi_m(x_i))_{0 \leq i \leq M}$ with a non-zero multiplicative coefficient $\alpha_m$. Hence, the rank of $A$ is equal to the rank of the matrix $(\varphi_{m}(x_i))_{0 \leq m, i \leq M}$. As the $\varphi_m$'s form a $T$-system, this matrix is invertible, which completes the proof.
\end{proof}

\begin{prop}[Invertibility and form of the Gram matrix for $H^1(\mu)$]
\label{prop:GramInvertH1}
Let $K$ be the kernel of $H^1(\mu)$, where $\mu$ is a probability distribution in $\bound$. Then,
Assumption \ref{state:GramInvert} is verified for all set $X$ composed of distinct knots. Furthermore, in that case, the precision matrix $K(X, X)^{-1}$ is a one-band matrix (or Jacobi matrix) of the form:
$$\begin{pmatrix}
a_1 & b_1 & 0 & 0 &\dots & 0 \\
b_1 & a_2 & b_2 & 0 &  & 0 \\
0 & b_2 & a_3 & b_3 & \ddots & \vdots \\
\vdots & \ddots & \ddots &\ddots &\ddots & 0 \\
0 &  & 0 &b_{n-2} & a_{n-1} & b_{n-1} \\
0 & 0 & \dots & 0 & b_{n-1} & a_n
\end{pmatrix}$$
\end{prop}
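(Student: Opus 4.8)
The plan is to exploit the explicit single-pair form of the kernel established in Proposition~\ref{prop:MercerH1}, namely the representation~\eqref{eq:singlePairKernel}, $K(x,y)=\frac{1}{C}\psi(\min(x,y))\chi(\max(x,y))$, which reduces the whole statement to a classical fact about one-pair (Green's) matrices. First I would fix a set $X$ of distinct nodes and relabel them in increasing order $x_1<\dots<x_n$. Then for $i\le j$ one has $\min(x_i,x_j)=x_i$ and $\max(x_i,x_j)=x_j$, so $K(X,X)$ is the symmetric single-pair matrix with entries $K(x_i,x_j)=\frac{1}{C}\psi(x_i)\chi(x_j)$ for $i\le j$. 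Writing $p_i=\psi(x_i)$ and $q_i=\chi(x_i)$, the entire analysis is governed by the two finite sequences $(p_i)$, $(q_i)$ and by the sign of the increments $p_{i+1}q_i-p_iq_{i+1}$.

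Next I would pin down signs and monotonicity. Since $K$ is a genuine RKHS kernel, $K(x,x)=\frac{1}{C}\psi(x)\chi(x)>0$ on $[a,b]$, so after fixing the signs of the two linearly independent solutions we may take $\psi>0$, $\chi>0$ and $C>0$. The key input is Abel's identity: multiplying $f''-V'f'-f=0$ by $\rho=e^{-V}$ puts it in the self-adjoint form $(\rho f')'-\rho f=0$, for which $\rho(\psi\chi'-\psi'\chi)$ is constant; since $\psi,\chi$ are independent this constant is nonzero, so the Wronskian $W=\psi\chi'-\psi'\chi$ never vanishes and $(\psi/\chi)'=-W/\chi^2$ has constant sign. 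Hence $\psi/\chi$ is strictly monotone, and therefore each increment $p_{i+1}q_i-p_iq_{i+1}=q_iq_{i+1}\big(\tfrac{p_i}{q_i}-\tfrac{p_{i+1}}{q_{i+1}}\big)$ is nonzero and of constant sign in $i$.

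Invertibility then follows from the one-pair determinant identity
$$\det K(X,X)=\frac{1}{C^n}\,p_1q_n\prod_{i=1}^{n-1}\big(p_{i+1}q_i-p_iq_{i+1}\big),$$
all of whose factors are nonzero by the previous step; this verifies Assumption~\ref{state:GramInvert} for $K$. For the Jacobi form of the inverse I would argue directly. For fixed $j$, the column $i\mapsto K(x_i,x_j)$ is proportional to $p_i$ for $i\le j$ and to $q_i$ for $i\ge j$, with a single kink at $i=j$. A symmetric three-term (tridiagonal) operator has a two-dimensional solution space, so one can choose a tridiagonal matrix $T$ whose rows encode a recurrence annihilating \emph{both} discrete solution sequences $(p_i)$ and $(q_i)$ simultaneously; its off-diagonal entries are well defined and nonzero precisely because the increments $p_{i+1}q_i-p_iq_{i+1}$ do not vanish. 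This choice forces $(TK(X,X))_{kj}=0$ for $k\ne j$, while the jump of $i\mapsto K(x_i,x_j)$ at $i=j$ produces exactly the Kronecker delta on the diagonal, giving $T\,K(X,X)=I$ and hence $K(X,X)^{-1}=T$ tridiagonal with the displayed band pattern.

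The main obstacle is this last step: constructing and verifying the tridiagonal inverse. The positivity and monotonicity inputs are short, and invertibility is immediate once the single-pair determinant is available, but the tridiagonality carries the real content. I expect to carry it out either through the explicit identity $T\,K(X,X)=I$ sketched above, matching the three-term recurrence to the two solution sequences, or by citing the Gantmacher--Krein theory of one-pair oscillation matrices, in which the equivalence between single-pair matrices and tridiagonal inverses is standard; the symmetry and sign normalisation secured earlier guarantee that the resulting Jacobi matrix is symmetric.
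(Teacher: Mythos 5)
Your proposal is correct, and it reaches the same destination as the paper but by a more self-contained route. The paper's own proof is essentially two citations to Gantmacher--Krein: invertibility is obtained by noting that the single-pair kernel of Proposition~\ref{prop:MercerH1} is \emph{oscillatory} (Theorem 16, p.~238 of the reference), and the tridiagonal form of the inverse is read off from the theory of one-pair matrices (II.3, Example 6). You instead extract the two facts you actually need directly from the single-pair structure: positivity of $\psi$, $\chi$, $C$ from $K(x,x)>0$ (which holds since constants lie in the RKHS), strict monotonicity of $\psi/\chi$ from the Abel--Wronskian identity for the self-adjoint form $(\rho f')'-\rho f=0$, and then the classical one-pair determinant factorization to get invertibility, followed by the three-term-recurrence construction of the tridiagonal inverse. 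This buys a proof readable without the oscillation-kernel machinery, at the cost of having to verify the determinant identity and the annihilating recurrence (which, as you note, is where the real work sits; citing Gantmacher--Krein for the one-pair/tridiagonal equivalence at that point would make your argument coincide with the paper's second citation). Two small points to tidy up: the factorization $p_{i+1}q_i-p_iq_{i+1}=q_iq_{i+1}\bigl(\tfrac{p_i}{q_i}-\tfrac{p_{i+1}}{q_{i+1}}\bigr)$ has the wrong sign (the right-hand side equals $p_iq_{i+1}-p_{i+1}q_i$), though this does not affect the nonvanishing conclusion; and since $\rho$ is only piecewise $C^1$, the Wronskian argument should be phrased as $\rho(\psi\chi'-\psi'\chi)$ being absolutely continuous with derivative zero almost everywhere, hence constant.
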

\begin{proof} 
By  Proposition \ref{prop:MercerH1} and Theorem 16 (page 238) of \cite{oscillationBook}, the single-pair kernel $K$ is oscillatory in the sense of definition 1 page 178 of the same reference, implying in particular that for all $X$ composed of distinct knots, $K(X, X)$ is invertible. The form of the precision matrix is derived in 
section II.3., example 6, pages 79-82 of the same reference.
\end{proof}

%------------------------------
\subsection{Equivalence of Poincaré and optimal kernel quadratures in $H^1(\mu)$}
%------------------------------

In the previous section, we proved that kernel quadrature is well defined for the kernel $K$ of $H^1(\mu)$ and its finite-dimensional approximation $K_M$. 
Now, we show that the Poincaré quadrature can be viewed as an optimal kernel quadrature with positive weights for the finite dimensional approximation of the kernel of $H^1(\mu)$ in its Mercer's representation.

\begin{prop}[Equivalence of Poincaré quadrature and optimal kernel quadrature in $H^1(\mu)$]
\label{prop:Poinc_and_kernel_quad}
Let $(X_P, w_P)$ be the Poincaré quadrature of $H^1(\mu)$ with $n$ nodes and order $M=2n-1$. Then, $\wce(X_P, w_P, K_M) = 0$ and $(X_P, w_P)$ is an optimal kernel quadrature for $\HkM$, with positive weights.\\
Conversely, if $(X, w)$ defines a kernel quadrature for $\HkM$ such that $\wce(X, w, K_M) = 0$ and the weights are positive, then $X=X_P$ and $w=w_P$.\\
Furthermore, $w_P$ minimizes over all (possibly negative) weights the worst-case error given $X_P$:
$$ w_P = w^\star(X_P, K_M) = \mathrm{argmin}_{w \in \R^n} \wce(X_P, w, K_M).$$
\end{prop}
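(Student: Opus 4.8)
The plan is to exploit the explicit worst-case error formula \eqref{eq:wceMatricial} together with the Mercer structure \eqref{eq:K_M} of $K_M$, and the established fact (Prop.~\ref{prop:GaussQuadTsystem}) that the Poincaré basis admits a unique Gaussian quadrature with positive weights that is exact at order $2n-1$. First I would show that for \emph{any} quadrature $(X, w)$, the worst-case error on $\HkM$ measures exactly the failure to integrate the functions $\varphi_0, \dots, \varphi_M$. Concretely, since $K_M(x, \cdot) = \sum_{m=0}^M \alpha_m \varphi_m(x) \varphi_m$ and the $e_m = \sqrt{\alpha_m}\varphi_m$ form an orthonormal basis of $\HkM$, the error functional $h \mapsto \int h\,d\mu - \sum_i w_i h(x_i)$ has representer $\int K_M(x,\cdot)\,d\mu(x) - \sum_i w_i K_M(x_i,\cdot)$, whose squared $\HkM$-norm expands as
$$
\wce(X, w, K_M)^2 = \sum_{m=0}^M \alpha_m \left( \int \varphi_m\,d\mu - \sum_{i=1}^n w_i \varphi_m(x_i) \right)^2.
$$
Because every $\alpha_m = (1+\lambda_m)^{-1} > 0$, this vanishes \emph{if and only if} the quadrature is exact on $\varphi_0, \dots, \varphi_M$, i.e.\ exact at order $M = 2n-1$ for the $T$-system.

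With this equivalence in hand, the first claim follows directly: the Poincaré quadrature $(X_P, w_P)$ is by definition the Gaussian quadrature of the $T$-system, hence exact at order $2n-1$ with positive weights, so $\wce(X_P, w_P, K_M) = 0$. Since the worst-case error is non-negative, this is the global minimum over all quadratures, so $(X_P, w_P)$ is an optimal kernel quadrature for $\HkM$. For the converse, suppose $(X, w)$ achieves $\wce(X, w, K_M) = 0$ with positive weights. By the equivalence above, $(X, w)$ is then a quadrature with positive weights, exact at order $2n-1$, using $n$ nodes. The uniqueness part of Prop.~\ref{prop:GaussQuadTsystem} — which asserts that the lower principal representation is the \emph{unique} such quadrature with positive weights and minimal number of nodes — forces $X = X_P$ and $w = w_P$. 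The only point requiring mild care is that the converse hypothesis fixes $n$ nodes a priori, so one must confirm that a positive-weight quadrature exact at order $2n-1$ cannot use fewer than $n$ nodes (immediate, since exactness at order $2n-1$ requires at least $n$ nodes for a $T$-system, just as for polynomials) and hence lands exactly in the uniqueness regime of Prop.~\ref{prop:GaussQuadTsystem}.

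The final claim is essentially a corollary of the first together with Assumption~\ref{state:GramInvert}. By Prop.~\ref{prop:GramInvertH1trunc}, the Gram matrix $K_M(X_P, X_P)$ is invertible since $X_P$ consists of $n \le M+1$ distinct nodes, so the map $w \mapsto \wce(X_P, w, K_M)^2$ is strictly convex and admits the unique minimizer $w^\star(X_P, K_M) = K_M(X_P, X_P)^{-1} \ell_{K_M}(X_P)$ by \eqref{eq:optimalWeight}. But we have already shown $w_P$ drives the worst-case error to its absolute floor of $0$, which is \emph{a fortiori} the minimum over weights for the fixed node set $X_P$; by uniqueness of the strictly convex minimizer we conclude $w_P = w^\star(X_P, K_M)$.

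I expect the main obstacle to be the bookkeeping in the converse direction: one must be careful that ``positive weights and $\wce = 0$'' precisely matches the hypotheses of the uniqueness statement in Prop.~\ref{prop:GaussQuadTsystem} (positive weights, exactness at order $2n-1$, \emph{minimal} node count $n$), rather than some larger family such as the upper principal representation, which is also exact at order $2n-1$ with positive weights but uses $n+1$ nodes including the endpoints. Pinning down that the prescribed $n$ nodes cannot secretly realize a different representation is the one place where the argument is not purely mechanical.
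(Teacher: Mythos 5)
Your proposal is correct and follows essentially the same route as the paper: exactness on $\varphi_0,\dots,\varphi_M$ is equivalent to $\wce(X,w,K_M)=0$, the converse is settled by the uniqueness of the Gaussian quadrature for $T$-systems (Prop.~\ref{prop:GaussQuadTsystem}), and the last claim follows from the strict convexity guaranteed by Prop.~\ref{prop:GramInvertH1trunc}. Your explicit spectral identity $\wce(X,w,K_M)^2=\sum_{m=0}^M \alpha_m\bigl(\int\varphi_m\,d\mu-\sum_i w_i\varphi_m(x_i)\bigr)^2$ is a slightly tidier packaging of the equivalence that the paper proves in two separate directions, and your care about the minimal node count in the converse is a worthwhile precision, but the substance is the same.
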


\begin{rem} 
As a consequence of Prop.~\ref{prop:Poinc_and_kernel_quad}, the optimal weights $w^\star(X_P, K_M)$ are equal to $w_P$, and are thus positive and sum to $1$, which was not obvious a priori as they are defined by an optimization problem on $\R^n$. 
\end{rem}

\begin{proof} 
Let us first consider the Poincaré quadrature. We know that
\begin{equation} \label{eq:PoincQuadDef}
\int \varphi_m(x) d\mu(x) = \sum_{j=1}^n w_{j} \varphi_m(x^j), \qquad m=0, \dots, M.
\end{equation}
Let $x' \in [a,b]$. By linearity, we deduce from (\ref{eq:PoincQuadDef}) that
$$ \sum_{m=0}^M \alpha_m \varphi_m(x') \int \varphi_m(x) d\mu(x) = 
\sum_{m=0}^M \alpha_m \varphi_m(x')
\sum_{j=1}^n w_{j} \varphi_m(x^j),$$
or equivalently
$$ \int \sum_{m=0}^M \alpha_m \varphi_m(x') \varphi_m(x) d\mu(x) = \sum_{j=1}^n w_{j} \left(\sum_{m=0}^M \alpha_m \varphi_m(x')
 \varphi_m(x^j) \right),$$
i.e.
$$
\int K_M(x, x') d\mu(x) = \sum_{i = 1}^n w_i K(x^i, x').
$$
This proves that $wce(X_P, w_P, K_M) = 0$.\\
Conversely, let $(X, w)$ be a kernel quadrature for $\HkM$ such that $\wce(X_P, w_P, K_M) = 0$ and with positive weights. 
By definition of the worst-case error, this implies that for all $f$ in the unit ball of $\HkM$, $\int f(x) d\mu(x) = \sum_{i=1}^n w_i f(x_i)$.
By considering $f/\Vert f \Vert_{\HkM}$, this identity is true for all $f \in \HkM$. In particular for $f=\varphi_m$ with $m \leq M$, we deduce that the quadrature defined by $(X, w)$ is exact for all $\varphi_m$ such that $m \leq M$. Thus, by uniqueness of the Gaussian quadrature of the $T$-system $(\varphi_m)_{m \in \N}$ (Prop.~\ref{prop:GaussQuadTsystem}), we deduce that $X = X_P$ and $w=w_P$.\\
Now, denote $X_P = \{x_1, \dots, x_n \}$. Recall that the nodes $x_i$ are all different by a property of the (generalized) Gaussian quadrature.
By Proposition~\ref{prop:GramInvertH1trunc}, the Gram matrix $(K_M(x^i, x^j))_{0 \leq i, j \leq n}$ is then invertible. 
This implies that the minimization problem 
$$ \min_w \wce(X_P, w, K_M)$$
has a unique solution $w^\star(X_P, K_M)$. Since $\wce(X_P, w_P, K_M) = 0$ and $\wce(X_P, w, K_M) \geq 0$ for all $w$, we obtain $w^\star(X_P, K_M) = w_P$, which concludes the proof.
\end{proof}

% --------------------------
\subsection{Formulas for optimal weights}
% --------------------------
Exploiting the equivalence of Poincaré quadrature and kernel quadrature, we obtain several formulas for optimal weights.

\begin{prop}[Expression of the optimal weights and associated worst-case error for $H^1(\mu)$] 
\label{prop:weights_wce_H1mu}
Let $K$ be the kernel of $H^1(\mu)$. Then, for all set $X$ composed of distinct quadrature knots, we have
\begin{equation} \label{eq:optimalWeightH1}
w^\star(X, K) = K(X, X)^{-1} \mathbb{1}, 
\end{equation}
where $\mathbb{1}$ is the vector of ones of length $n$,
and 
\begin{equation}
\label{eq:wceOptimalH1}
 (\wce(X, w^\star(X, K), K))^2 = 1 - \mathbb{1}^\top K(X, X)^{-1} \mathbb{1} = 1 - \sum_{i=1}^n w_i^\star(X, K).
\end{equation}
Similarly, when $X$ is formed by at most $M+1$ distinct points, Equations (\ref{eq:optimalWeightH1}) and (\ref{eq:wceOptimalH1}) are true when replacing $K$ by $K_M$. 
In particular, 
\begin{equation} \label{eq:wP_KM}
    w_P = K_M(X_P, X_P)^{-1} \mathbb{1}.
\end{equation}
\end{prop}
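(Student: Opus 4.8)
The plan is to reduce everything to the general kernel-quadrature formulas \eqref{eq:optimalWeight} and \eqref{eq:wceOptimal} by exploiting the normalization property of $K$ established in Proposition~\ref{prop:MercerH1}, namely $\int_a^b K(x,y)\,d\mu(y) = 1$ for all $x \in [a,b]$. The crucial observation is that this single property trivializes both the primitive vector $\ell_K(X)$ and the constant $c_K$.

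First I would compute $\ell_K(X)$. For each node $x_i$, its $i$-th entry is $\int_a^b K(x_i, x)\,d\mu(x) = 1$ by the normalization property, so $\ell_K(X) = \mathbb{1}$. Since $X$ is composed of distinct knots, Proposition~\ref{prop:GramInvertH1} guarantees that $K(X,X)$ is invertible, so the general formula \eqref{eq:optimalWeight} applies and immediately yields $w^\star(X, K) = K(X,X)^{-1}\mathbb{1}$, which is \eqref{eq:optimalWeightH1}. Next I would compute $c_K$ by applying the normalization property a second time: $c_K = \int_a^b \left( \int_a^b K(x, x')\,d\mu(x') \right) d\mu(x) = \int_a^b 1\,d\mu(x) = 1$. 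Substituting $\ell_K(X) = \mathbb{1}$ and $c_K = 1$ into \eqref{eq:wceOptimal} then gives $\wce(X, w^\star(X,K), K)^2 = 1 - \mathbb{1}^\top K(X,X)^{-1} \mathbb{1}$. The last rewriting in \eqref{eq:wceOptimalH1} follows by recognizing that $\mathbb{1}^\top K(X,X)^{-1}\mathbb{1} = \mathbb{1}^\top w^\star(X,K) = \sum_{i=1}^n w_i^\star(X,K)$.

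For the truncated kernel $K_M$, the only thing that needs checking is that the same normalization holds, after which the derivation above carries over verbatim (using Proposition~\ref{prop:GramInvertH1trunc} for invertibility of $K_M(X,X)$ when $X$ has at most $M+1$ distinct points). I would verify this directly from the truncated Mercer decomposition \eqref{eq:K_M}: since $\int_a^b \varphi_m\,d\mu = \langle \varphi_m, \varphi_0 \rangle = \delta_{m,0}$ by orthonormality of the Poincaré basis together with $\varphi_0 = 1$, only the $m=0$ term survives, giving $\int_a^b K_M(x,y)\,d\mu(y) = \alpha_0 = (1+\lambda_0)^{-1} = 1$. Finally, the identity \eqref{eq:wP_KM} follows by taking $X = X_P$: the Poincaré quadrature has $n \le M+1$ distinct nodes, so the $K_M$ version of \eqref{eq:optimalWeightH1} applies, and by Proposition~\ref{prop:Poinc_and_kernel_quad} its weights satisfy $w_P = w^\star(X_P, K_M) = K_M(X_P, X_P)^{-1}\mathbb{1}$.

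There is no genuine obstacle here: the entire argument is a direct substitution into already-established formulas once the normalization is recognized. The one point deserving a moment of care is the verification that $K_M$ inherits the same normalization as $K$, which hinges precisely on $\lambda_0 = 0$, hence $\alpha_0 = 1$; without this the constant term $c_{K_M}$ would no longer equal $1$ and the clean formulas would break.
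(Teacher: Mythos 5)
Your proposal is correct and follows essentially the same route as the paper: both reduce the claim to the general formulas \eqref{eq:optimalWeight} and \eqref{eq:wceOptimal} by showing $\ell_K(X)=\mathbb{1}$ and $c_K=1$ via $\int_a^b \varphi_m\,d\mu=\delta_{m,0}$, and both obtain \eqref{eq:wP_KM} from $w_P=w^\star(X_P,K_M)$ in Proposition~\ref{prop:Poinc_and_kernel_quad}. The only cosmetic difference is that you cite the normalization $\int_a^b K(x,y)\,d\mu(y)=1$ already recorded in Proposition~\ref{prop:MercerH1}, whereas the paper re-derives it by term-by-term integration of the (uniformly convergent) Mercer series.
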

\begin{proof} 
First recall that Assumption~\ref{state:GramInvert} is verified both for $K$ and $K_M $ (Prop.~\ref{prop:GramInvertH1} and \ref{prop:GramInvertH1trunc}), in the latter case when $X$ has at most $M+1$ points.
Let us first consider the case of the kernel of $H^1(\mu)$. Consider the Mercer representation of $K$, 
$$ K(x,y) = \sum_{m=0}^\infty \alpha_m \varphi_m(x) \varphi_m(y).$$
Recall that, as $K$ is continuous, the convergence is uniform on the compact set $[a, b]^2$. Furthermore, the $\varphi_m$ are also continuous on $[a, b]$. Thus, for all $y$ in $[a,b]$, we have
$$ \int_a^b K(x,y) d\mu(x) = \sum_{m=0}^\infty \alpha_m \varphi_m(y) \int_a^b \varphi_m(x) d\mu(x).$$
Now, as $\int_a^b \varphi_m d\mu = \delta_{0, m}$. Thus, 
$$ \ell_K(y) =  \int_a^b K(x, y) d\mu(x) = 1.$$
From (\ref{eq:optimalWeight}), we then deduce (\ref{eq:optimalWeightH1}). Finally, from (\ref{eq:wceOptimal}), we deduce (\ref{eq:wceOptimalH1}), using that $c_K = \int_a^b \ell_K(y) d\mu(y) = 1$.\\
The same proof applies when $K=K_M$, replacing the full Mercer's representation by a partial sum. In that case, we recover the fact that $wce(X_P, w_P, K_M) = 0$, since the weights $w_P$ sum to one. From Proposition~\ref{prop:Poinc_and_kernel_quad}, we have $w_P = w^\star(X_P, K_M)$. As $X_P$ contains $n \leq M+1$ distinct points, we deduce (\ref{eq:wce_H1_2}).\\
Finally, the fact that the precision matrix $K(X, X)^{-1}$ is one-band has been proved in Prop. \ref{prop:GramInvertH1}. 
\end{proof}

%----------------------------
\subsection{Quadrature error}
%----------------------------
\label{sec:quadError}
The quadrature error can be quantified using the \emph{radius of information}, which is defined as the smallest worst-case error of the optimal kernel quadrature with $n$ nodes:
\begin{equation} \label{eq:radius}
r(n) = \inf_{X, w} \wce(X, w, \Hone).
\end{equation}
In our case, the radius of information can hardly be computed.
However, we can consider the Poincaré quadrature, which is the optimal kernel quadrature with positive weights of the finite-dimensional approximation of $H^1(\mu)$, and compute the corresponding worst-case error in $\Hone$. More precisely, if $(X_P, w_P)$ denotes the Poincaré quadrature with $n$ nodes and order $M=2n-1$, we set:
\begin{equation} \label{eq:wce}
\wce(n) = \wce(X_P, w_P, \Hone).
\end{equation}
By definition we have $\wce(n) \geq r(n)$. Furthemore, when $n$ is large, $K_M$ tends to $K$ and we can hope that $\wce(n)$ is a good approximation of $r(n)$. We now provide formulas for $\wce(n)$.

\begin{prop}[Quadrature error]
\label{prop:wceH1}
The worst-case error of the Poincaré quadrature with $n$ nodes and order $M=2n-1$ can be expressed with the Mercer's representation of $H^1(\mu)$, by:
\begin{equation}
\wce(n)^2 = \sum_{m \geq M+1} \alpha_m 
\left(\sum_{i=1}^n w_i \varphi_m(x_i)\right)^2,
\label{eq:wce_spectral}
\end{equation}
or with formulas involving the kernel of $H^1(\mu)$:
\begin{eqnarray}
\wce(n)^2 &=& w_P^\top (K(X_P, X_P) - K_M(X_P, X_P)) w_P \label{eq:wce_H1_1} \\
&=& \mathbb{1}^\top K_M(X_P, X_P)^{-1} 
(K(X_P, X_P) - K_M(X_P, X_P)) K_M(X_P, X_P)^{-1} \mathbb{1} \label{eq:wce_H1_2} 
\end{eqnarray}
Furthermore, 
we have, for all $n \in \N$,
$$ \wce(n) \leq \sqrt{\Vert K - K_{2n-1} \Vert_{\infty}}, $$
which goes to zero when $n$ tends to infinity. 
\end{prop}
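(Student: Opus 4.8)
The plan is to start from the matricial worst-case formula \eqref{eq:wceMatricial} applied to the full kernel $K$ and to exploit three facts already established: the Mercer expansion \eqref{eq:H1kerMercer}, the orthonormality relations $\int_a^b \varphi_m \, d\mu = \delta_{0,m}$ (which, as in the proof of Prop.~\ref{prop:weights_wce_H1mu}, give $\ell_K(X) = \mathbb{1}$ and $c_K = 1$), and the exactness of the Poincaré quadrature on $\mathrm{span}(\varphi_0,\dots,\varphi_M)$ recorded in \eqref{eq:PoincQuadDef}. First I would substitute the Mercer series into the finite double sum $\sum_{i,j} w_i w_j K(x_i,x_j)$ and interchange the (finite) $i,j$-sum with the (uniformly convergent) $m$-sum to obtain $\sum_m \alpha_m \big(\sum_i w_i \varphi_m(x_i)\big)^2$. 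Combined with $\ell_K(X) = \mathbb{1}$, $c_K = 1$ and $\sum_i w_i = 1$, formula \eqref{eq:wceMatricial} reduces to $\wce(n)^2 = -1 + \sum_m \alpha_m \big(\sum_i w_i \varphi_m(x_i)\big)^2$. The $m=0$ term equals $1$ (since $\varphi_0 = 1$, $\alpha_0 = 1$ and the weights sum to one) and cancels the constant $-1$, while for $1 \le m \le M$ the exactness \eqref{eq:PoincQuadDef} forces $\sum_i w_i \varphi_m(x_i) = \int_a^b \varphi_m \, d\mu = 0$. Only the tail $m \ge M+1$ survives, which is exactly \eqref{eq:wce_spectral}.

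For the two kernel formulas I would write the same matricial expression for $K_M$. Because $\int_a^b K_M(x_i,x)\,d\mu(x) = 1$ and $\iint K_M\,d\mu\,d\mu = 1$ (only the $m=0$ term survives, all others integrating to zero), the identity $\wce(X_P,w_P,K_M) = 0$ from Prop.~\ref{prop:Poinc_and_kernel_quad} becomes $w_P^\top K_M(X_P,X_P)\, w_P = 1$. Subtracting this from the analogous relation $\wce(n)^2 = w_P^\top K(X_P,X_P)\, w_P - 1$ for the full kernel yields \eqref{eq:wce_H1_1}. Substituting $w_P = K_M(X_P,X_P)^{-1}\mathbb{1}$ from \eqref{eq:wP_KM} and using the symmetry of $K_M(X_P,X_P)^{-1}$ then gives \eqref{eq:wce_H1_2}.

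For the upper bound I would read \eqref{eq:wce_H1_1} as $\wce(n)^2 = \sum_{i,j} w_i w_j\, R(x_i,x_j)$ with residual kernel $R = K - K_M$. The crucial structural input is that the Poincaré weights are non-negative and sum to one (Prop.~\ref{prop:Poinc_and_kernel_quad}), so bounding each entry by $R(x_i,x_j) \le \Vert K - K_M \Vert_\infty$ gives $\wce(n)^2 \le \Vert K - K_M\Vert_\infty \big(\sum_i w_i\big)^2 = \Vert K - K_{2n-1}\Vert_\infty$. Convergence to zero then follows from the uniform convergence of the Mercer series on $[a,b]^2$ established in Prop.~\ref{prop:MercerH1}: the sup-norm of the tail $K - K_M$ tends to $0$ as $M = 2n-1 \to \infty$.

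The main obstacle I anticipate is obtaining the clean constant $1$ (rather than a crude factor) in the bound. A naive estimate through the total variation of the signed measure $\mu - \sum_i w_i \delta_{x_i}$ would only produce $\wce(n)^2 \le 4\Vert R\Vert_\infty$. Securing the sharp $\Vert R\Vert_\infty$ hinges on writing the error purely as the quadratic form in the \emph{probability} weights $w_P$ (which is legitimate precisely because $R$ has no $\varphi_0$ component, so that $\ell_R = 0$ and $c_R = 0$) and then invoking $\sum_i w_i = 1$ together with $w_i \ge 0$. Everything else is routine manipulation of the Mercer expansion, justified termwise by its uniform convergence.
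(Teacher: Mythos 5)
Your proposal is correct; every identity you invoke ($\ell_K(X)=\mathbb{1}$, $c_K=1$, exactness on $\varphi_0,\dots,\varphi_M$, $\sum_i w_i=1$ with $w_i\ge 0$, uniform convergence of the Mercer series justifying the interchange of sums) is established in the paper, and the cancellations you describe do yield \eqref{eq:wce_spectral}, \eqref{eq:wce_H1_1}, \eqref{eq:wce_H1_2} and the sup-norm bound. The route differs from the paper's in a small but genuine way. The paper works inside the RKHS: it introduces the error functional $L(f)=\int f\,d\mu-\sum_i w_i f(x_i)$ and the residual kernel $R_M=K-K_M$, shows $L$ kills $K_M(\cdot,x')$ by exactness, identifies $\wce(n)$ with the $\mathcal{H}_K$-norm of the representer $\sum_i w_i R_M(x_i,\cdot)$, and evaluates that norm using the $H^1(\mu)$-orthogonality of the Poincar\'e basis together with $\Vert\varphi_m\Vert_{H^1(\mu)}^2=1+\lambda_m=\alpha_m^{-1}$. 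You instead stay entirely at the level of the closed-form quadratic expression \eqref{eq:wceMatricial}, substitute the Mercer expansion, and cancel the $m=0$ term against $c_K-2\ell_K^\top w$ and the $1\le m\le M$ terms by exactness. Your version avoids the extra ingredient $\Vert\varphi_m\Vert_{H^1(\mu)}^2=\alpha_m^{-1}$ (cited by the paper from an external reference) at the cost of a slightly longer bookkeeping computation; the paper's version makes the structural fact $\wce(X_P,w_P,K)=\wce(X_P,w_P,R_M)$ explicit, which is conceptually illuminating. Your derivation of \eqref{eq:wce_H1_1} by subtracting the two quadratic forms is a clean shortcut, and your final sup-norm bound, including the observation that positivity of the weights is what secures the sharp constant $\bigl(\sum_i w_i\bigr)^2=1$, is exactly the paper's argument.
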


\begin{proof}[Proof of Proposition \ref{prop:wceH1}] Denote by $L$ the linear form on $\mathcal{H}_K$ defined by
$$L(f; X, w) = \int f(x) d\mu(x) - \sum_{i=1}^n w_i f(x_i)$$
and $$R_M(x, x') = \sum_{m \geq M+1} \alpha_m \varphi_m(x) \varphi_m(x').$$
Notice that $ K = K_M + R_M.$
For the Poincaré quadrature, we have:
$$ L(K_M(., x'); X_P, w_P) =  \sum_{m = 0}^M \alpha_m \varphi_m(x')
\left( \int \varphi_m(x) d\mu(x) 
- 
\sum_{i=1}^n w_i \varphi_m(x_i)
\right) = 0$$
as the quadrature is exact for the eigenfunctions up to order $M$.
Hence, by linearity, it holds for all $x'$: 
$$L(K(., x'); X_P, w_P) =  L(R_M(., x'); X_P, w_P).$$
Now, remark that for all quadrature formula $(X, w)$, 
$$ \wce(X, w, K) = \Vert x' \mapsto L(K(., x')) \Vert_{\mathcal{H}_K}.$$ Therefore, we have $\wce(X_P, w_P, K_M) = 0$ and  $\wce(X_P, w_P, K) = \wce(X_P, w_P, R_M) $. Thus, the quantity of interest reduces to:
$$\wce(n) := \left \vert \wce(X_P, w_P, K_M) - \wce(X_P, w_P, K) \right \vert = \wce(X_P, w_P, R_M).$$
Now, when $m \geq M+1$, we have $\int \varphi_m d\mu=0$. Thus,
\begin{eqnarray*}
\wce(n)^2 &=& 
\Vert x' \mapsto L(R_M(., x'); X_P, w_P) \Vert_{\mathcal{H}_K}^2 \\
&=& \left\Vert \sum_{m \geq M+1}\alpha_m \left(\sum_{i=1}^n w_i \varphi_m(x_i)\right) \varphi_m  \right\Vert_{\mathcal{H}_K}^2
\end{eqnarray*}
As $(\varphi_m)$ is an orthogonal basis of $H^1(\mu)$ with $\Vert \varphi_m \Vert_{\mathcal{H}_K}^2 
= 1+\lambda_m=\alpha_m^{-1}$, we immediately obtain
$$ \wce(n)^2 = \sum_{m \geq M+1} \alpha_m 
\left(\sum_{i=1}^n w_i \varphi_m(x_i)\right)^2.$$
Remarking that $R_M(x_i, .) = \displaystyle \sum_{m \geq M+1} \alpha_m \varphi_m(x_i) \varphi_m(.)$, we get
$$ \wce(n)^2 = \left\Vert 
\sum_{i=1}^n w_i R_M(x_i, .) \right\Vert_{\mathcal{H}_K}^2$$
which gives, using the reproducing property (as $R_M(x_i, .) \in \Hk$),
$$ \wce(n)^2 = \sum_{1 \leq i, j \leq n} w_i w_j R_M(x_i, x_j)$$
which gives (\ref{eq:wce_H1_1}).
Using (\ref{eq:wP_KM}), we deduce (\ref{eq:wce_H1_2}).
By Mercer's theorem, as $K$ is continuous, the series $\sum_m \alpha_m \varphi_m(x) \varphi_m(y)$ converges to $K(x, y)$ uniformly on $[a, b]\times [a,b]$. 
Thus $R_M$ goes to zero uniformly on $[a, b]^2$, and, using the positivity of the weights, 
$$ \wce(n)^2 \leq \left(\sum_{1 \leq i,j \leq n} w_i w_j \right) \Vert R_M \Vert_\infty.$$
The results follows by remarking that 
$\sum_{1 \leq i,j \leq n} w_i w_j = 
\left( \sum_{i=1}^n w_i \right)^2 = 1.$
\end{proof}

%%%%%%%%%%%%%%%%%%%%%%%%%%%%%%%%
\section{The case of $H^1(a,b)$}
%%%%%%%%%%%%%%%%%%%%%%%%%%%%%%%%
\label{sec:PoincQuadPropUnif}

\subsection{Nodes coincide with zeros of a basis function} %are zeros for the Poincar\'e basis of the uniform distribution}

For polynomials, the nodes of the Gaussian quadrature coincide with the zeros of an orthogonal polynomial. The main result of this section can be viewed as an extension of this property to certain $T$-systems. The key argument is that, in the case of the uniform distribution, the quadrature is not only exact for the functions of the $T$-system (up to some order) but also for their products. This guarantees that the quadrature nodes coincide with the zeros of an element of the $T$-system.

\begin{lem}[Stability under multiplication] \label{lem:PoincareUnifOrtho}
Let $\mu$ be the uniform distribution on $[a,b]$. 
Let $n, j, k \in \N$ such that $j<n$ and $k \leq n$. Then, the Poincaré quadrature with $n$ nodes is exact for the product of eigenfunctions $\varphi_j \varphi_k$ and for the product of their derivatives at any order $\varphi_j^{(\ell)} \varphi_k^{(\ell)} (\ell \geq 1)$. In particular, if $j \neq k$, it preserves the orthogonality of $\varphi_j^{(\ell)}$ and $\varphi_k^{(\ell)}$ for all $\ell \in \N$.
\end{lem}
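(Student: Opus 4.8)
The plan is to exploit the explicit trigonometric form of the Poincaré basis for the uniform distribution, recalled in Example~\ref{example:uniform_kernel}: $\varphi_0 \equiv 1$ and, for $m \geq 1$, $\varphi_m(x) = \sqrt{2}\cos(m\omega(x-a))$ with $\omega = \pi/(b-a)$. The core observation is that products of such cosines linearize: writing $\theta = \omega(x-a)$, the product-to-sum identity gives for $j,k \geq 1$
$$\varphi_j(x)\varphi_k(x) = \cos\bigl((j+k)\theta\bigr) + \cos\bigl(|j-k|\theta\bigr),$$
so that $\varphi_j\varphi_k$ lies in the span of $\{\varphi_0, \varphi_{|j-k|}, \varphi_{j+k}\}$ (with the convention that the $|j-k|=0$ term is $\varphi_0$). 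The key arithmetic point is that under the hypotheses $j \leq n-1$ and $k \leq n$ one has $j+k \leq 2n-1$, so every index appearing is at most $2n-1$. Since the Poincaré quadrature with $n$ nodes is, by Proposition~\ref{prop:GaussQuadTsystem}, exact on $\mathrm{span}(\varphi_0, \dots, \varphi_{2n-1})$, it is exact for $\varphi_j\varphi_k$.

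For the products of derivatives I would differentiate the cosine form: for $m \geq 1$, the derivative $\varphi_m^{(\ell)}(x)$ equals $\pm\sqrt{2}\,(m\omega)^\ell$ times $\cos(m\theta)$ when $\ell$ is even and times $\sin(m\theta)$ when $\ell$ is odd. Hence $\varphi_j^{(\ell)}\varphi_k^{(\ell)}$ is a constant multiple of either a product of two cosines (for even $\ell$) or two sines (for odd $\ell$); in both cases the product-to-sum formulas for $\cos A\cos B$ and $\sin A\sin B$ express it again as a linear combination of $\cos(|j-k|\theta)$ and $\cos((j+k)\theta)$, i.e., of $\varphi_0, \varphi_{|j-k|}, \varphi_{j+k}$. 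The same index bound $j+k \leq 2n-1$ then yields exactness. The degenerate cases $j=0$ or $k=0$ are immediate: $\varphi_0^{(\ell)}=0$ for $\ell\geq 1$ annihilates the derivative products, while $\varphi_0\varphi_k = \varphi_k$ with $k \leq n \leq 2n-1$ is directly in the exactness range.

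For the orthogonality claim with $j \neq k$ (both $\geq 1$), I would note that then $j+k \geq 1$ and $|j-k| \geq 1$, so the linearization of $\varphi_j^{(\ell)}\varphi_k^{(\ell)}$ carries no $\varphi_0$ component; its true integral against $\mu$ therefore vanishes, using $\int \varphi_m\,d\mu = \delta_{0,m}$. Exactness then forces
$$\sum_{i=1}^n w_i\,\varphi_j^{(\ell)}(x_i)\varphi_k^{(\ell)}(x_i) = \int \varphi_j^{(\ell)}\varphi_k^{(\ell)}\,d\mu = 0,$$
which is exactly the preservation of orthogonality.

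I do not anticipate a genuine obstacle: once the closed-form cosine eigenfunctions are in hand, the argument is a short linearization. The only thing to watch is the bookkeeping — verifying that $j+k \leq 2n-1$ matches precisely the exactness order $2n-1$ of the quadrature, and treating the $j=0$, $k=0$, and even/odd $\ell$ cases without gaps. It is worth emphasizing that this lemma is special to the uniform distribution exactly because the explicit trigonometric eigenfunctions make products close up within the span of low-order eigenfunctions; for a general $\mu$ no such linearization is available.
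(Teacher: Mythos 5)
Your proof is correct and follows essentially the same route as the paper's: linearize $\varphi_j\varphi_k$ (and the products of derivatives) via the product-to-sum identities, observe that all resulting indices are at most $j+k\leq 2n-1$, and invoke exactness of the Gaussian quadrature up to order $2n-1$. Your explicit treatment of the degenerate cases $j=0$, $k=0$ and the even/odd $\ell$ split is slightly more careful bookkeeping than the paper's, but it is not a different argument.
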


\begin{proof}
Recall that when $\mu$ is the uniform distribution on $[a,b]$, we have $\varphi_m(x) 
=\sqrt{2} \cos(m \omega (x - a))$ 
with $\omega = \pi/(b-a)$.
Using the trigonometric identity,  
$$ \cos(u) \cos(v) = \frac{1}{2} \left( \cos(u+v) + \cos(u-v) \right)$$
it follows that 
$$ \varphi_n(x) \varphi_m(x) 
= \frac12 \left( \varphi_{n+m}(x) + \varphi_{n-m}(x)\right).
$$
Now, by Definition~\ref{def:GaussQuadTsystem}, the Gaussian quadrature of the $T$-system $(\varphi_m)_{m \in \N}$ with $n$ nodes is exact for $\varphi_m$ for $0 \leq m \leq 2n-1$.
Let $j,k \in \N$ such that $j <n$, $k \leq n$.  
Without loss of generality, assume $k \geq j$.
This implies that both $k+j$ and $k-j$ are non-negative and lower or equal than $2n-1$. Thus, the quadrature is exact for $\varphi_{k+j}$ and $\varphi_{k-j}$ and we have:
\begin{align*}
    \delta_{kj} = \int \varphi_k(x) \varphi_j(x) d\mu(x) 
    &= \frac12 \int \varphi_{k+j}(x) d\mu(x) + \frac12 \int \varphi_{k-j}(x) d\mu(x) \\
    &= \frac12 \sum_{i=1}^n w_i \varphi_{k+j}(x_i) + \frac12 \sum_{i=1}^n w_i \varphi_{k-j}(x_i) \\
    &= \sum_{i=1}^n w_i \frac12 \left( \varphi_{k+j}(x_i) + \varphi_{k-j}(x_i) \right) \\
    &= \sum_{i=1}^n w_i \varphi_k(x_i) \varphi_j(x_i).
\end{align*}
To prove the result for the derivatives, it is enough to consider the first-order derivatives, as the higher order derivatives of cosine is either proportional to the function itself or to its first derivative. Now, by a property of the Poincaré basis,
$$\langle \varphi'_j, \varphi'_k \rangle = \lambda_j  \langle \varphi_j, \varphi_k \rangle = \lambda_j \delta_{k,j}.$$
Then, using the trigonometric identity $\sin(u)\sin(v) = \frac{1}{2} (\cos(u-v) - \cos(u+v))$, one can check that 
$ \varphi'_j \varphi'_k = A (\varphi_{j-k} - \varphi_{j+k})$
for some constant $A$. Then, in a similar way as above, we have: 
\begin{eqnarray*}
\sum_{i=1}^n \omega_i \varphi_j'(x_i) \varphi_k'(x_i) 
&=& \sum_{i=1}^n \omega_i A (\varphi_{j-k}(x_i) - \varphi_{j+k}(x_i)) \\
&=& A \int \varphi_{j-k}(x)d\mu(x) - A \int \varphi_{j+k}(x) d\mu(x) \\
&=& \int \varphi'_j(x) \varphi'_k(x) d\mu(x) =  \lambda_j \delta_{j,k}.
\end{eqnarray*}
\end{proof}

\begin{prop} \label{prop:gaussQuadZeros}
Let $\mu$ be a probability distribution in $\bound$. 
Let $u=(u_m)_{m \in \N}$ be a $T$-system formed by orthogonal functions in $L^2(\mu)$, such that
for all $j,k \in \N$ such that $j < n$, $k \leq n$, the Gaussian quadrature of $u$ with $n$ nodes is exact for the product of functions $u_j u_k$.
Then the quadrature nodes $x_1, \dots, x_n$ coincide with the zeros of $u_n$.
\end{prop}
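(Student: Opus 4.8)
The plan is to combine the orthogonality of the $u_m$ with the product-exactness hypothesis to show directly that $u_n$ vanishes at every quadrature node, and then to invoke the $T$-system bound on the number of zeros of a generalized polynomial to conclude that these nodes are exactly the zeros of $u_n$. The argument is essentially the $T$-system analogue of the classical fact that the Gaussian quadrature nodes for orthogonal polynomials are the roots of the degree-$n$ orthogonal polynomial, but it avoids any Euclidean-division step.

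First I would record the two structural facts provided by Proposition~\ref{prop:GaussQuadTsystem}: the nodes $x_1, \dots, x_n$ are pairwise distinct and lie in $(a,b)$, and the weights $w_1, \dots, w_n$ are strictly positive. The core of the proof is then the following linear-algebra observation. For each $j$ with $0 \le j \le n-1$, the pair $(j,k) = (j,n)$ satisfies the index constraints of the hypothesis ($j < n$ and $k \le n$), so the quadrature integrates $u_j u_n$ exactly; combined with orthogonality, $\int u_j u_n \, d\mu = 0$, this gives
\[ \sum_{i=1}^n w_i \, u_n(x_i)\, u_j(x_i) = 0, \qquad j = 0, \dots, n-1. \]
Reading these $n$ equations as a homogeneous linear system in the vector $v$ with entries $v_i = w_i u_n(x_i)$, the coefficient matrix is exactly the generalized Vandermonde matrix $V(u_0, \dots, u_{n-1}; x_1, \dots, x_n)$, whose determinant is positive since $u$ is a $T$-system and the nodes are distinct. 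Hence $v = 0$, that is $w_i u_n(x_i) = 0$ for every $i$; as the weights are positive, $u_n(x_i) = 0$ for all $i = 1, \dots, n$.

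It remains to close the argument with the defining property of $T$-systems: being a nontrivial generalized polynomial in $\mathrm{span}(u_0, \dots, u_n)$, the function $u_n$ has at most $n$ zeros on $[a,b]$. Since we have exhibited $n$ distinct zeros, namely the nodes, $u_n$ has exactly $n$ zeros and the set $\{x_1, \dots, x_n\}$ coincides with the zero set of $u_n$, as claimed. I expect the only delicate point to be bookkeeping rather than a genuine obstacle: one must check that the product-exactness hypothesis really does cover all pairs $(u_j, u_n)$ for $j \le n-1$, and that the strict positivity of the weights is available so that $u_n(x_i) = 0$ can be deduced from $w_i u_n(x_i) = 0$. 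Everything else is the invertibility of the generalized Vandermonde matrix together with the $T$-system zero count.
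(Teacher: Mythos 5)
Your proof is correct, and it takes a genuinely different (essentially dual) route to the paper's. The paper constructs the auxiliary generalized polynomial $p_n(x) = \det V(u_0,\dots,u_{n-1},u_n;x_1,\dots,x_n,x)$, which vanishes at the nodes by construction, then uses the product-exactness for \emph{all} pairs $(j,k)$ with $j<n$, $k\le n$ to show $\int p_n u_j\,d\mu = \sum_i w_i\,p_n(x_i)u_j(x_i)=0$, and finally identifies $p_n$ with a nonzero multiple of $u_n$ via the orthogonality of the system. You instead use product-exactness only for the pairs $(j,n)$ with $j<n$, obtain the homogeneous system $\sum_i u_j(x_i)\,\bigl(w_i u_n(x_i)\bigr)=0$, and kill the vector $\bigl(w_i u_n(x_i)\bigr)_i$ by invertibility of the same generalized Vandermonde matrix; positivity of the weights then yields $u_n(x_i)=0$ directly. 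Your version is more economical in hypotheses (it never touches the products $u_ju_k$ with $k<n$) and avoids introducing $p_n$, at the price of needing the strict positivity of the weights, which the paper's argument does not use at this stage; both positivity and distinctness of the nodes are indeed guaranteed by Proposition~\ref{prop:GaussQuadTsystem}, so this is not a gap. You also close the statement more carefully than the paper does: the paper stops at ``$u_n$ vanishes at the nodes,'' whereas you add the $T$-system zero count (a nontrivial element of $\mathrm{span}(u_0,\dots,u_n)$ has at most $n$ zeros) to conclude that the node set \emph{coincides} with the zero set of $u_n$, which is what the proposition actually asserts.
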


\begin{proof}
Following \citep[p.~20]{karlin1966t}, as $u$ is a $T$-system, there exists a generalized polynomial $p_n = \alpha_0 u_0 + \ldots + \alpha_n u_n$ that vanishes at $x_1, \dots, x_n$. 
It can be defined from the generalized Vandermonde matrix by:
$$ p_n(x) 
= \det V(u_0, \dots, u_{n-1}, \varphi_n; x_1, \dots, x_n, x).$$
Let $j \in \N$ with $j < n$. By hypothesis, for all $k \in \N$ such that $k \leq n$, the Gaussian quadrature of $u$ is exact for $u_j u_k$. 
This implies that $p_n$ is orthogonal to $u_j$:
\begin{align*}
    \int p_n(x) u_j(x) d\mu(x) &= \sum_{k=0}^n \alpha_k \int u_k(x) u_j(x) d\mu(x) \\
    &= \sum_{k=0}^n \alpha_k \sum_{i=1}^n w_i u_k(x_i) u_j(x_i) \\
    &= \sum_{i=1}^n w_i  \underbrace{\sum_{k=0}^n \alpha_k u_k(x_i)}_{= p_n(x_i) = 0} u_j(x_i) = 0.
\end{align*}
By orthogonality of the $u_i$'s, we deduce that $p_n$ is proportional to $u_n$. Hence, $u_n$ is zero at the quadrature nodes, which was to prove.
\end{proof}

Applying Prop.~\ref{prop:gaussQuadZeros} to the $T$-system of orthogonal polynomials, we recover the well-known link between the zeros and the nodes for the Gaussian quadrature of polynomials (\cite{Szego1959}). In that case indeed, the exactness of the quadrature for $u_ju_k$
is a consequence of the stability by multiplication of polynomials, as $u_ju_k$ is a polynomial of degree less than $2n-1$.
Coming back to the Poincaré basis, we immediately deduce from Lemma~\ref{lem:PoincareUnifOrtho} and Prop.~\ref{prop:gaussQuadZeros} the announced result of the section:

\begin{cor} \label{prop:gaussQuadUnifCase}
The nodes of the Poincaré quadrature of $H^1(a,b)$ with $n$ nodes are equal to the zeros of the Poincaré basis function $\varphi_n$.
\end{cor}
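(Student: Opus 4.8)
The plan is to obtain this corollary as a direct application of Proposition~\ref{prop:gaussQuadZeros} to the $T$-system formed by the Poincaré basis, so that the entire task reduces to checking that the three hypotheses of Proposition~\ref{prop:gaussQuadZeros} hold for $u = (\varphi_m)_{m \in \N}$ in the uniform case. First I would note that $(\varphi_m)$ is orthogonal in $L^2(\mu)$ (indeed orthonormal, by the definition of the Poincaré basis), and that it is a $T$-system by Proposition~\ref{prop:PoincareTsystem}. This secures two of the three requirements without any further work.

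The third requirement, namely exactness of the $n$-node Poincaré (Gaussian) quadrature on every product $\varphi_j \varphi_k$ with $j < n$ and $k \leq n$, is supplied verbatim by Lemma~\ref{lem:PoincareUnifOrtho}. Here the specifically uniform structure is what makes the argument go through: because $\varphi_m(x) = \sqrt{2}\cos(m\omega(x-a))$, the product-to-sum identity collapses $\varphi_j \varphi_k$ into a combination of $\varphi_{k+j}$ and $\varphi_{k-j}$, whose orders satisfy $k+j \leq 2n-1$ and $|k-j| \leq 2n-1$ and hence fall within the exactness range $0, \dots, 2n-1$ of the Gaussian quadrature. Thus each such product lies in the span of eigenfunctions for which the quadrature is exact, which is precisely the exactness-on-products hypothesis demanded by Proposition~\ref{prop:gaussQuadZeros}.

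With all three hypotheses verified, Proposition~\ref{prop:gaussQuadZeros} immediately yields that the nodes $x_1, \dots, x_n$ coincide with the zeros of $\varphi_n$, which is the claim. I expect no genuine obstacle at this stage, since all the substantive content was already established in Lemma~\ref{lem:PoincareUnifOrtho} (the trigonometric reduction of products to admissible-order basis functions) and in Proposition~\ref{prop:gaussQuadZeros} (where the vanishing generalized polynomial interpolating the nodes is shown, via exactness on products, to be orthogonal to $\varphi_0, \dots, \varphi_{n-1}$ and hence proportional to $\varphi_n$). The only point to verify with care is that the index ranges align exactly: Lemma~\ref{lem:PoincareUnifOrtho} is stated for $j < n$, $k \leq n$, which is precisely the range Proposition~\ref{prop:gaussQuadZeros} requires, so the specialization is clean.
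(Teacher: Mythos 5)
Your proposal is correct and follows exactly the paper's own route: the corollary is obtained by applying Proposition~\ref{prop:gaussQuadZeros} to the Poincaré $T$-system, with orthonormality and the $T$-system property coming from the definitions and Proposition~\ref{prop:PoincareTsystem}, and the exactness-on-products hypothesis supplied by Lemma~\ref{lem:PoincareUnifOrtho}. The index ranges match as you note, so nothing further is needed.
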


\subsection{Explicit quadrature formulas and quadrature error}

\begin{lem} 
\label{lem:trigoSum}
For all $m \in \mathbb{Z}$ and all $n \in \N^\star$,
$$\sum_{i=1}^{n} \cos \left(
\left(i-\frac{1}{2} \right) \frac{m \pi}{n} \right) = 
\begin{cases}
0 & \textrm{if $m$ is not a multiple of $2n$}\\
n (-1)^p & \textrm{if $m=(2n)p$, for all $p \in \mathbb{Z}$}.
\end{cases}
$$
\end{lem}

\begin{proof}
The proof is standard in computing trigonometric sums.
Let $x = (m \pi) / n$. \\
If $m$ is a multiple of $2n$, then $x=2p\pi$ for some $p \in \mathbb{Z}$. Then 
$$\sum_{i=1}^{n} \cos \left(
\left(i-\frac{1}{2} \right) x \right)
= \sum_{i=1}^n \cos(-p\pi) = \sum_{i=1}^n (-1)^p = n (-1)^p.
$$
Now, assume that $m$ is not a multiple of $2n$. Then $x/2$ is not a multiple of $\pi$ and $\sin(x/2) \neq 0$. Using the trigonometric identity
$$ 2\cos a \sin b =  \sin(a+b) - \sin(a-b),$$
we have
$$ 2 \cos \left(
\left(i-\frac{1}{2} \right) x \right) \sin \left( \frac{x}{2} \right) =
\sin(ix) - \sin((i-1)x).$$
Summing with respect to $i$ then gives, by telescoping, 
$$ 
2\sin \left( \frac{x}{2} \right)
\times 
\sum_{i=1}^{n} \cos \left(
\left(i-\frac{1}{2} \right) x \right)  = \sin( nx) = \sin(m\pi) = 0.
$$
As we are in the case where $\sin(x/2)\neq 0$, this concludes the proof.
\end{proof}

\begin{prop}
The Poincaré quadrature of $H^1(a,b)$ with $n$ nodes corresponds to the midpoint (or rectangle) quadrature rule
$$ \int_a^b f(x) dx  = \frac{b-a}{n} \sum_{i=1}^n  f \left( 
a + \left(i-\frac{1}{2} \right) \frac{b-a}{n}  \right) . $$
Thus, the optimal weights are equal to $1/n$, and the optimal nodes are evenly spaced on $[a, b]$ and located at the middle of each interval $\displaystyle \left[a + (i-1)\frac{b-a}{n}, a+i\frac{b-a}{n}
\right]$, $i=1, \dots, n$.
This quadrature has order $2n-1$ with respect to the generalized polynomials: it is exact for all $\varphi_m \propto \cos \left( m \pi \frac{x-a}{b-a} \right)$ with $m \leq 2n-1$. Furthermore it is also exact for all $\varphi_m$ such that $m$ is not a multiple of $2n$, and for polynomials of order 1.\\
\end{prop}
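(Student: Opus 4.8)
The plan is to identify the nodes explicitly from Corollary~\ref{prop:gaussQuadUnifCase}, and then pin down the weights by combining exactness with the uniqueness of the Gaussian quadrature of a $T$-system. First I would compute the zeros of $\varphi_n$. Recall from Example~\ref{example:uniform_kernel} that, for the uniform distribution, $\varphi_n(x) = \sqrt{2}\cos(n\omega(x-a))$ with $\omega = \pi/(b-a)$. Setting $\cos(n\omega(x-a)) = 0$ gives $n\omega(x-a) = (k+\tfrac12)\pi$, i.e. $x = a + (k+\tfrac12)\frac{b-a}{n}$, and for $k = 0, \dots, n-1$ these are exactly the $n$ midpoints $x_i = a + (i-\tfrac12)\frac{b-a}{n}$ lying in the open interval $(a,b)$. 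By Corollary~\ref{prop:gaussQuadUnifCase}, these are the Poincaré quadrature nodes, which settles the statement about the nodes.

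Next I would determine the weights. Rather than solving the moment system directly, I would verify that the rule with the nodes above and uniform weights $1/n$ (in the $\mu$-normalized sense, consistent with $d\mu = dx/(b-a)$, so that $w_i = 1/n$ translates to the factor $\frac{b-a}{n}$ against $dx$) is exact for $\varphi_m$, $0 \le m \le 2n-1$, and invoke uniqueness. For $m=0$ both sides equal $1$. For $1 \le m \le 2n-1$, the left-hand side $\int_a^b \varphi_m\, d\mu = 0$ by orthogonality to $\varphi_0$, while substituting the nodes gives the right-hand side $\frac{\sqrt{2}}{n}\sum_{i=1}^n \cos\!\big((i-\tfrac12)\frac{m\pi}{n}\big)$. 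Since no $m$ with $1 \le m \le 2n-1$ is a multiple of $2n$, Lemma~\ref{lem:trigoSum} shows this sum vanishes, so the rule is exact at order $2n-1$. As it has $n$ distinct nodes in $(a,b)$ and positive weights summing to one, the uniqueness part of Proposition~\ref{prop:GaussQuadTsystem} forces it to coincide with the Poincaré quadrature; hence the weights are all $1/n$.

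The supplementary claims come from the same computation. For any $m \ge 1$ that is not a multiple of $2n$, Lemma~\ref{lem:trigoSum} again yields $\sum_{i=1}^n \cos\!\big((i-\tfrac12)\frac{m\pi}{n}\big) = 0 = \int_a^b \varphi_m\, d\mu$, so the quadrature is exact for these $\varphi_m$ even beyond order $2n-1$. Exactness for degree-one polynomials is a short direct check: the constant case is immediate, and for $f(x)=x$ one evaluates $\frac{1}{n}\sum_{i=1}^n x_i$ using $\sum_{i=1}^n(i-\tfrac12)=\tfrac{n^2}{2}$, obtaining the midpoint $\frac{a+b}{2}=\int_a^b x\, d\mu$. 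The only conceptual step is recognizing that exactness at order $2n-1$ together with the uniqueness of the $T$-system Gaussian quadrature fixes the weights with no further computation; I expect no genuine obstacle, the main thing being to keep the indexing consistent so that the zeros of $\varphi_n$ match the midpoints and $m$ stays in the range where $2n \nmid m$.
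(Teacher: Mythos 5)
Your proof is correct and follows essentially the same route as the paper: identify the nodes as the zeros of $\varphi_n$ via Corollary~\ref{prop:gaussQuadUnifCase}, compute them to be the midpoints, and then use Lemma~\ref{lem:trigoSum} to show that the rule with weights $1/n$ reproduces all the required moments. The only cosmetic difference is in how the weights are pinned down: you invoke the global uniqueness of the $T$-system Gaussian quadrature from Proposition~\ref{prop:GaussQuadTsystem}, whereas the paper observes that, the nodes being fixed, the weights are already the unique solution of the first $n$ moment equations because the matrix $(\varphi_m(x_i))_{0 \leq m, i \leq n-1}$ is invertible by the $T$-system property; both arguments are valid.
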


\begin{proof}
From the previous section, the nodes of the Poincaré quadrature of $H^1(a,b)$ coincide to the zeros of $\varphi_n = \sqrt{2} \cos \left( n \pi \frac{x-a}{b-a} \right)$ on $[a, b]$. Hence they are equal to $x_i = a + \left(i-\frac{1}{2} \right) \frac{b-a}{n}$, for $i=1, \dots, n$.\\
Now, 
$\varphi_m(x_i) 
=\sqrt{2} \cos(m \omega (x_i - a)) 
= \sqrt{2} \cos \left(
\left( i - \frac{1}{2} \right) \frac{m \pi}{n}
\right).$
Thus, by Lemma \ref{lem:trigoSum}, 
$\sum_{i=1}^{n} \varphi_m(x_i) = 
0$ if $m$ is not a multiple of $2n$.
Hence, if we set $w_i=\frac{1}{n}$ and if $m$ is not a multiple of $2n$, then
$$ \int_a^b \varphi_m(x) \frac{dx}{b-a} = \delta_{m, 0} = \sum_{i=1}^{n} w_i \varphi_m(x_i),$$
where the case $m=0$ is equivalent to $w_1 + \dots + w_n = 1$.
Recall that the quadrature weights are uniquely determined by the first $n$ equations above, corresponding to $m=0, \dots, n-1$. Indeed, the matrix of the linear system is $(\varphi_m(x_i))_{0 \leq m,i \leq n-1}$, which is invertible by the T-system property.
This proves that the optimal weights are equal to $1/n$. Furthermore, the same equations show that the quadrature rule is exact for all $\varphi_m$ such that $m$ is not a multiple of $2n$.
Finally, the quadrature is interpreted as the midpoint quadrature rule, which is exact for all polynomials of order $1$.
\end{proof}

\begin{prop}[Quadrature error]
\label{prop:quadError}
Consider the quadrature error defined as the worst-case error of the Poincaré quadrature of $H^1(a, b)$ with $n$ nodes (see Section~\ref{sec:quadError}). We have:
\begin{equation}
\wce(n) = 
\left(\frac{\frac{b-a}{2n}}{\tanh \left(\frac{b-a}{2n} \right)} - 1\right)^{1/2}
\end{equation}
and goes to zero at a linear speed when $n$ tends to infinity:
$$ \wce(n) \sim  \frac{b-a}{2\sqrt{3}}\frac{1}{n}.$$
\end{prop}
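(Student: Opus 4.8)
The plan is to evaluate the spectral formula \eqref{eq:wce_spectral} explicitly in the uniform case and then to recognize the resulting series as one of the shifted Riemann series already computed in Example~\ref{example:uniform_kernel}. Recall that here $M = 2n-1$, so the sum runs over $m \geq 2n$, with $\alpha_m = (1+m^2\omega^2)^{-1}$ and $\varphi_m(x) = \sqrt{2}\cos(m\omega(x-a))$, where $\omega = \pi/(b-a)$.

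First I would compute the quadrature sum $\sum_{i=1}^n w_i \varphi_m(x_i)$ for each $m \geq 2n$. Using $w_i = 1/n$ and the nodes $x_i = a + (i-\tfrac12)\tfrac{b-a}{n}$ established in the previous proposition, one has $\varphi_m(x_i) = \sqrt{2}\cos\!\left((i-\tfrac12)\tfrac{m\pi}{n}\right)$, so Lemma~\ref{lem:trigoSum} gives $\sum_{i=1}^n w_i\varphi_m(x_i) = 0$ unless $m$ is a multiple of $2n$, in which case $m = 2np$ and the sum equals $\sqrt{2}(-1)^p$. Hence only the indices $m = 2np$, $p \geq 1$, survive in \eqref{eq:wce_spectral}, and
\[
\wce(n)^2 = \sum_{p=1}^\infty \alpha_{2np}\left(\sqrt{2}(-1)^p\right)^2 = 2\sum_{p=1}^\infty \frac{1}{1 + (2np)^2\omega^2}.
\]

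Next I would bring this into the form of the Riemann series. Setting $r = \tfrac{1}{2n\omega} = \tfrac{b-a}{2n\pi}$, one has $1 + (2np)^2\omega^2 = (p^2 + r^2)/r^2$, so $\wce(n)^2 = 2r^2\sum_{p\geq 1}(p^2+r^2)^{-1}$. Plugging in the first identity of Example~\ref{example:uniform_kernel}, namely $\sum_{p\geq1}(p^2+r^2)^{-1} = \tfrac{1}{2r^2}\left(\tfrac{\pi r}{\tanh(\pi r)} - 1\right)$, and observing that $\pi r = \tfrac{b-a}{2n}$, the $r^2$ factors cancel and yield the announced closed form
\[
\wce(n)^2 = \frac{(b-a)/(2n)}{\tanh\!\left((b-a)/(2n)\right)} - 1.
\]

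Finally, for the asymptotics I would set $t = \tfrac{b-a}{2n} \to 0$ and Taylor-expand: since $\tanh t = t - t^3/3 + O(t^5)$, we get $\tfrac{t}{\tanh t} - 1 = \tfrac{t^2}{3} + O(t^4)$, whence $\wce(n)^2 \sim \tfrac{(b-a)^2}{12\,n^2}$ and $\wce(n) \sim \tfrac{b-a}{2\sqrt3}\tfrac1n$. None of the steps presents a serious obstacle; the only points requiring care are the bookkeeping that isolates the surviving indices $m = 2np$ through Lemma~\ref{lem:trigoSum}, and matching the parameter $r$ correctly so that the Riemann series identity of Example~\ref{example:uniform_kernel} applies verbatim.
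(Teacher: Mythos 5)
Your proposal is correct and follows essentially the same route as the paper's proof: evaluate \eqref{eq:wce_spectral} with $w_i=1/n$ and the midpoint nodes, use Lemma~\ref{lem:trigoSum} to isolate the indices $m=2np$, and identify the resulting series with the shifted Riemann sum of Example~\ref{example:uniform_kernel} to get the closed form. The only (immaterial) difference is in the final asymptotics, where you Taylor-expand $t/\tanh t$ directly while the paper passes to the limit $r\to 0$ in the series via dominated convergence to recover $\zeta(2)=\pi^2/6$; both give $\wce(n)^2\sim (b-a)^2/(12n^2)$.
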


\begin{proof}
Recall that the optimal weights are $w_i= 1/n$, and the optimal nodes are $x_i = a + \left(i - \frac{1}{2} \right) \frac{b-a}{n}$.
Then, using (\ref{eq:wce_spectral}), we have:
$$ \wce(n)^2 = \sum_{m=2n}^{\infty} \alpha_m \frac{1}{n^2} \left( \sum_{i=1}^n \varphi_m(x_i) \right)^2.$$
Now, 
$\varphi_m(x_i) 
=\sqrt{2} \cos(m \omega (x_i - a)) 
= \sqrt{2} \cos \left(
\left( i - \frac{1}{2} \right) \frac{m \pi}{n}
\right),$
with $\omega = \frac{\pi}{b-a}$. By Lemma \ref{lem:trigoSum}, 
$$\sum_{i=1}^{n} \varphi_m(x_i) = 
\begin{cases}
0 & \textrm{if $m$ is not a multiple of $2n$}\\
\sqrt{2} n (-1)^p & \textrm{if $m=(2n)p$, for all positive integer $p$}.
\end{cases}
$$
Thus in this sum above, the non-zeros terms are such that $m=2pn$. Observe that $m \geq 2n$ is then equivalent to $p \geq 1$. Hence, reparameterizing by $p$, we obtain
$$\wce(n)^2 = 2 \sum_{p = 1}^\infty \alpha_{2np} = 2 \sum_{p=1}^\infty \frac{1}{1+p^2 / r^2}.
$$
with $r=(2n\omega)^{-1}$. Following the computations of Example 1, we have 
$$ \wce(n)^2 = 2 \sum_{p=1}^\infty \frac{1}{1 + p ^2 / r^2} 
= 2 r^2 \sum_{p=1}^\infty \frac{1}{p^2 + r^2} 
=  
\frac{\pi r}{\tanh(\pi r)} - 1.$$
This gives the explicit form of $\wce(n)$. To obtain the speed of convergence, notice that by an immediate application of Lebesgue theorem,  
$\sum_{p=1}^\infty \frac{1}{p^2 + r^2}$ tends to $\zeta(2)=\frac{\pi^2}{6}$ when $r$ tends to zero. Hence, when $n$ tends to infinity,
$$\wce(n)^2 \sim  \frac{1}{2n^2} \frac{1}{\omega^2} \frac{\pi^2}{6} = \frac{(b-a)^2}{12} \frac{1}{n^2}.$$
\end{proof}

\subsection{Asymptotical optimality of the Poincaré quadrature}
In the particular case of the uniform distribution, the kernel of $H^1(a, b)$ is given explicitly (see Section~\ref{sec:MercerH1examples}). Thus it is possible to derive the optimal kernel quadrature for $H^1(a, b)$, which has been done in \cite{these_duc_jacquet_1973}. For $a=0, b=1$, it is proved that the optimal nodes are $x_i^\star = \frac{2i-1}{2n}$, thus corresponding to the nodes of the rectangle quadrature, and the optimal weights are 
$w_i^\star = 2 \tanh \left( \frac{1}{2n} \right).$
For large $n$, $w_i^\star \sim \frac{1}{n}$. Thus, the Poincaré quadrature, here equal to the rectangle quadrature, is asymptotically equivalent to the optimal kernel quadrature. Furthermore, the radius of information (worst-case error for the optimal quadrature, see Section~\ref{sec:quadError}) verifies
$$ r(n)^2 \sim \frac{1}{12n^2}.$$
This is the same convergence speed as the worst-case error of the Poincaré quadrature, which we derived in Prop.~\ref{prop:quadError}:
$$ \frac{\wce(n)}{r(n)} \underset{n \to \infty}{\rightarrow} 1.$$
Therefore, we can conclude that the Poincaré quadrature is asymptotically optimal for $H^1(0, 1)$. This result is intuitive as the finite-dimensional kernel $K_M$, for which the Poincaré quadrature is optimal, tends to the kernel of $H^1(\mu)$.

%%%%%%%%%%%%%%%%%%%%%%%%%%%%%%%
\section{Numerical experiments}
%%%%%%%%%%%%%%%%%%%%%%%%%%%%%%%
\label{sec:NumExp}
%-------------------------------------------------------------------------
\subsection{Numerical computation of the Poincar\'e quadrature}
%-------------------------------------------------------------------------

\paragraph{Computation of the spectral decomposition.}
The first step to compute numerically the Poincaré quadrature is to obtain the spectral decomposition of Theorem~\ref{thm:spectralThm}.
This was investigated by \citet[][Section 4.2.]{PoincInterval}, who proposed a finite element technique. It consists of solving Problem (P2) in the finite-dimensional space spanned by $N$ piecewise linear functions whose knots are evenly spaced, which then boils down to a matrix diagonalisation problem. 
The theory of finite elements quantifies the speed of convergence  when $N$ tends to infinity, depending on the regularity of the probability density function $\rho$ of $\mu \in \bound$. If $\rho$ is of class $C^{\ell}$, the Poincar\'e basis functions are of class $C^{\ell+1}$, and the order of convergence is $O(N^{-2\ell})$ for the eigenvalues and $O(N^{-\ell})$ for the eigenfunctions.  
Notice that the value of $N$, controlling the mesh size, should be must larger than the order of the eigenvalue (or eigenfunction) to estimate. In practice we choose $N = 1\,000$.
Figure~\ref{fig:PoincareBasis} illustrates the result for the uniform distribution on $(0,1)$ and the exponential distribution truncated on $(0,3)$, for which the spectral decomposition is known in closed-form (as detailed in Section~\ref{sec:MercerH1examples}). We can see that the numerical approximation is accurate.

\begin{figure}[h!]
    \centering
    \includegraphics[width=0.8\linewidth, trim=0cm 1cm 0cm 0.5cm, clip=TRUE]{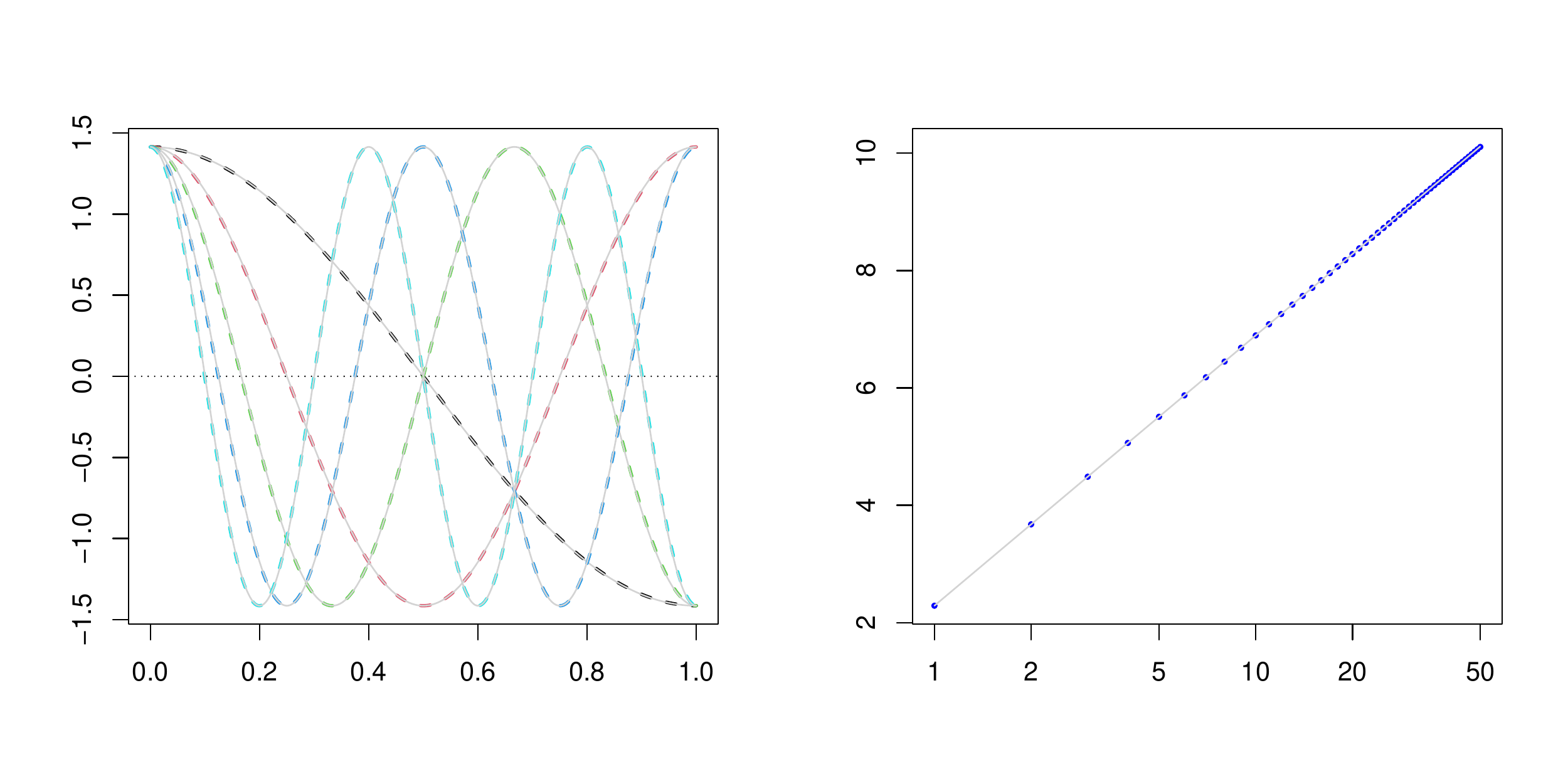}
    \includegraphics[width=0.8\linewidth, trim=0cm 1cm 0cm 1.5cm, clip=TRUE]{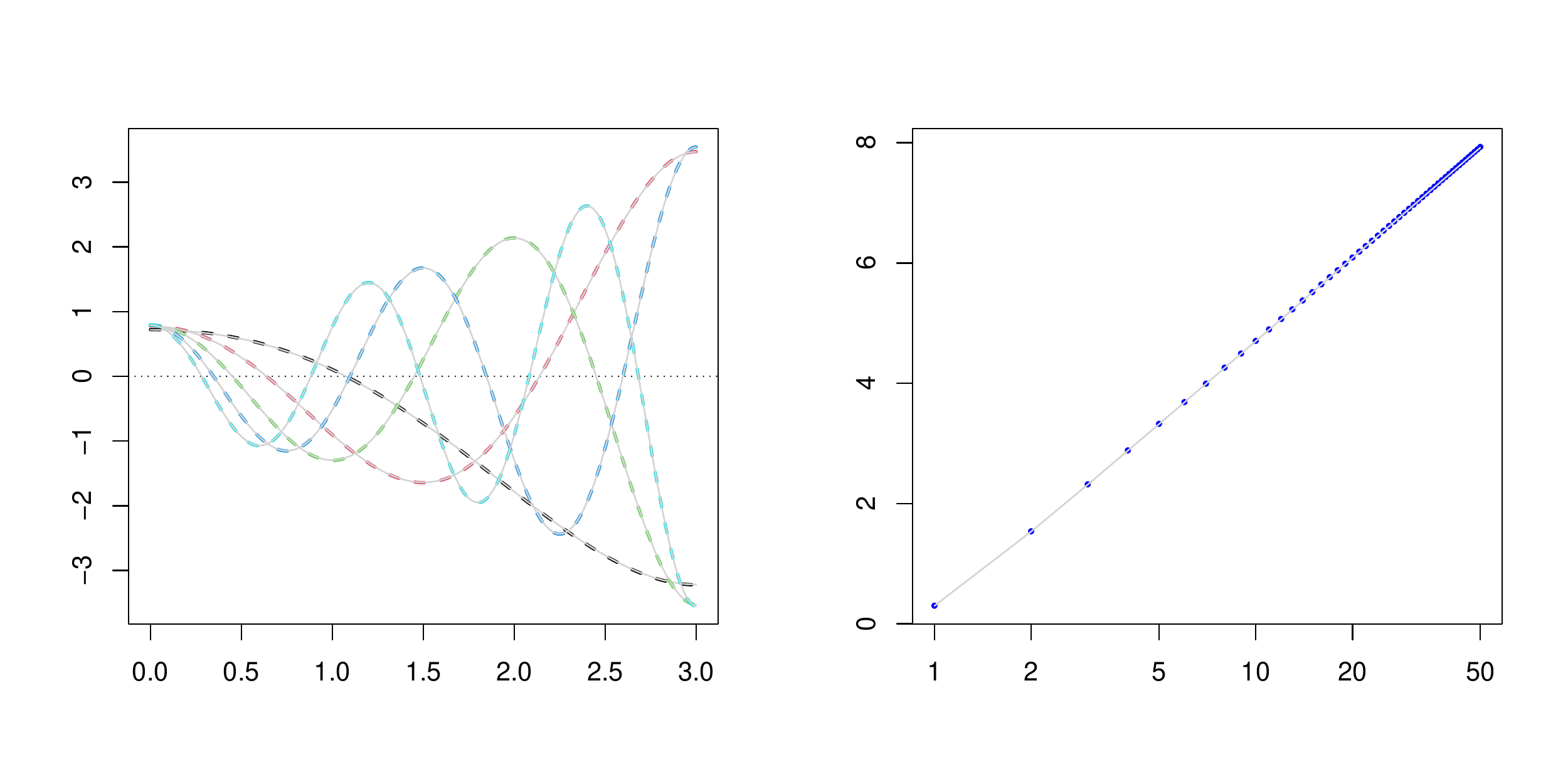}
    \caption{Poincar\'e spectral decomposition for the uniform distribution on $(0,1)$ (top) and the exponential distribution $d\mu(x) = e^{-x} 1_{\R^+}(x)$, truncated on $(0,3)$ (bottom).
    The left panel represent the first five eigenfunctions (Poincar\'e basis) and the right panel the first 50 eigenvalues in log scale. The colored dotted lines are the approximations computed by finite elements, and the (superimposed) grey solid lines are the true expressions.}
    \label{fig:PoincareBasis}
\end{figure}

\paragraph{Computation of the Poincar\'e quadrature.}
We now assume that the Poincar\'e basis has been computed numerically, as explained in the previous paragraph.
We aim at computing the Poincar\'e quadrature with $n$ nodes.
By definition, the Poincar\'e quadrature is the (generalized) Gaussian quadrature of the Poincar\'e basis. Thus, it can be obtained by solving the minimization problem~(\ref{eq:RepRec}) of Proposition~\ref{prop:GaussQuadTsystem} over probability distributions $\sigma$ subject to moment conditions. More precisely, inspired by the work of \cite{Gauss_quad_LP_Ryu_Boyd_2015} for the usual Gaussian quadrature, we search $\sigma$ as a discrete mesure supported on a fine uniform grid. 
We thus choose a large integer $N \gg n$ and consider the grid formed by evenly spaced points $z_j = a+jh$ ($j=0, \dots, N-1)$ where $h = \frac{b-a}{N-1}$ is the grid size. Searching for $\sigma$ of the form $\sigma=\sum_{j=0}^{N-1} w_j \delta_{z_j}$, 
(\ref{eq:RepRec}) is then rewritten as the following linear programming (LP) problem:
\begin{align} \label{eq:RepRecDiscrete}
    w^* = \arg{\min_{w \in [0, 1]^N}} \sum_{j=0}^{N-1} w_j \varphi_{2n}&(z_j) \\
    \nonumber
    \text{subject to} \quad & \sum_{j=0}^{N-1} w_j = 1 \\ \nonumber
    \text{and} \quad &  \sum_{j=0}^{N-1} w_j \varphi_i(z_j) = \int_a^b \varphi_i d\mu = \delta_{i, 0} \qquad (i = 0, \ldots, 2n-1).
\end{align}
The problem can be solved numerically by standard LP solvers. 
However, as the grid points may not contain exactly the unknown nodes, the solution $\sigma^\star = \sum_{j=0}^{N-1} w_j^\star \delta_{z_j}$ is generally supported on more than $n$ points. Thus, as a postprocessing step, we follow \cite{Gauss_quad_LP_Ryu_Boyd_2015} and apply a clustering technique with $n$ clusters (typically the $k$-means algorithm) to approximate the support points of the distribution. In each cluster $\mathcal{C}_i$ ($i=1, \dots, n$), a node $x_i$ is defined as a convex combination of its elements $z_j$ with weights proportional to $w^\star_j$ ($j \in \mathcal{C}_i$).
Finally, the weight $w_i$ associated to this node is defined as the sum of the weights in $\mathcal{C}_i$.

Due to the finite grid and the heuristic clustering and averaging technique, the moment conditions are in general not fulfilled exactly.
To improve the accuracy of the obtained solution $(x,w)$ further, we include a second optimization step, again following \citet{Gauss_quad_LP_Ryu_Boyd_2015}, in which we minimize the sum of squared moment conditions
\begin{align}
    (X^P, w^P) = \arg \min_{x, w} \sum_{i = 0}^{2n-1} & \left( \delta_{i,0} - \sum_{j=1}^{n} w_j \varphi_i(x_j) \right)^2 \label{eq:second_opt}\\
    \text{subject to} \quad & a \leq x_j \leq b \qquad (j = 1, \ldots, n) \nonumber\\
    \text{and} \quad & 0 \leq w_j \leq 1 \qquad (j = 1, \ldots, n) \nonumber
\end{align}
over both $x$ and $w$ using the interior-point algorithm (since we compute lower principal representations, which are always in $(a,b)$). The nodes and weights obtained from postprocessing the solution to \eqref{eq:RepRecDiscrete} are used as the starting point for \eqref{eq:second_opt}.
The objective function value of the solution $(X^P, w^P)$ of \eqref{eq:second_opt} is usually in the order of $10^{-7}$. For the uniform distribution, it is in the order of $10^{-16}$.

To evaluate the accuracy of the numerical quadrature, we compute the Poincar\'e quadrature of the uniform distribution where the analytical result is known (see Section~\ref{sec:PoincQuadPropUnif}). We have used the exact expression of the Poincar\'e basis.
Figure~\ref{fig:PoincQuadUnif} shows the results for $n=3$ and  $n=5$ nodes. We can see that the nodes found coincide with the zeros of $\varphi_n$, as expected by the theory. Furthermore, the weights are equal to $1/n$ (up to machine precision), which is also expected.

\begin{figure}[htbp]
\label{fig:PoincQuadUnif}
    \centering
    {\includegraphics[width=.49\textwidth, trim=0cm 0cm 0cm 0.8cm, clip=TRUE]{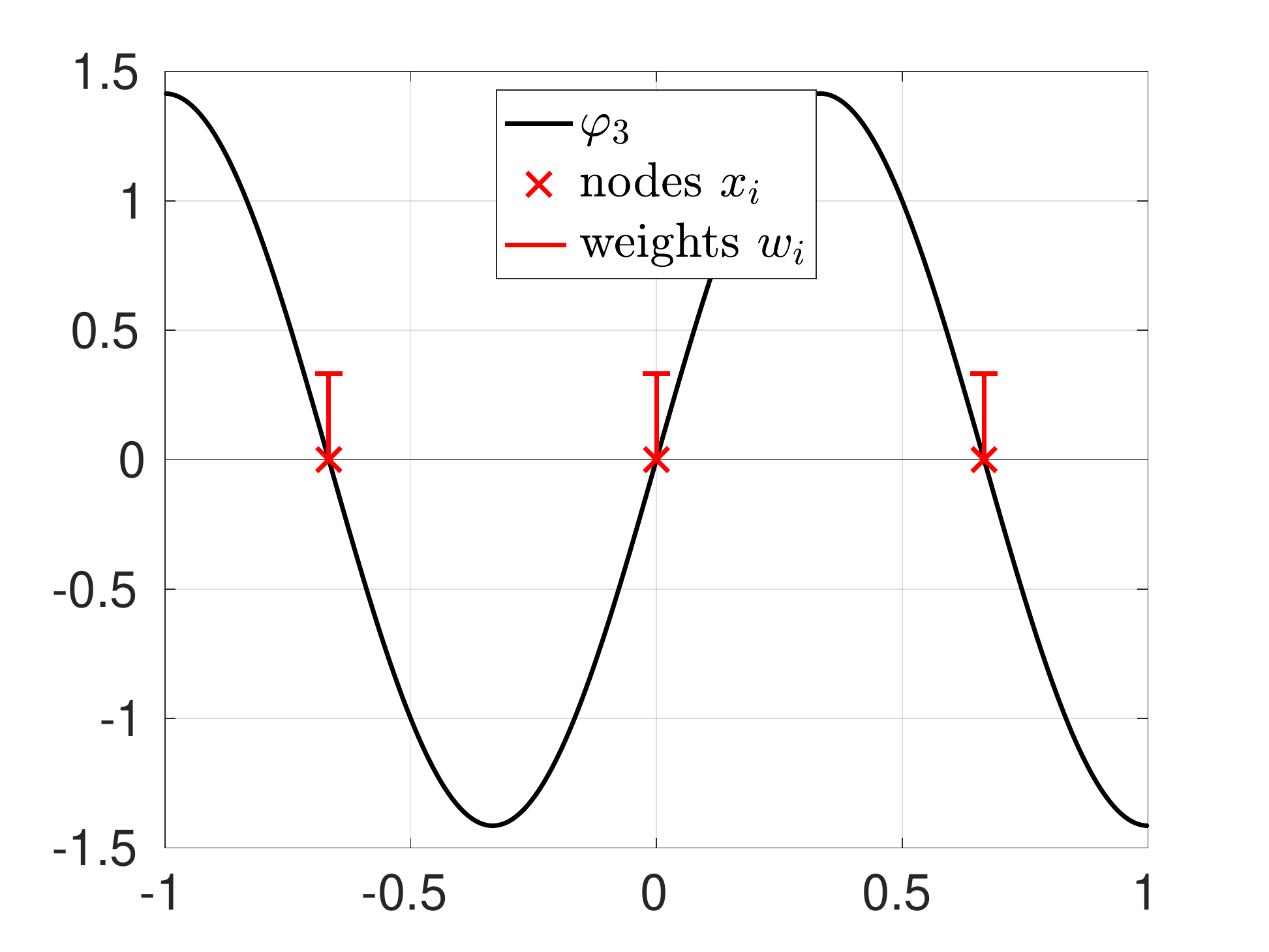}}
    \hfill
    {\includegraphics[width=.49\textwidth, trim=0cm 0cm 0cm 0.8cm, clip=TRUE]{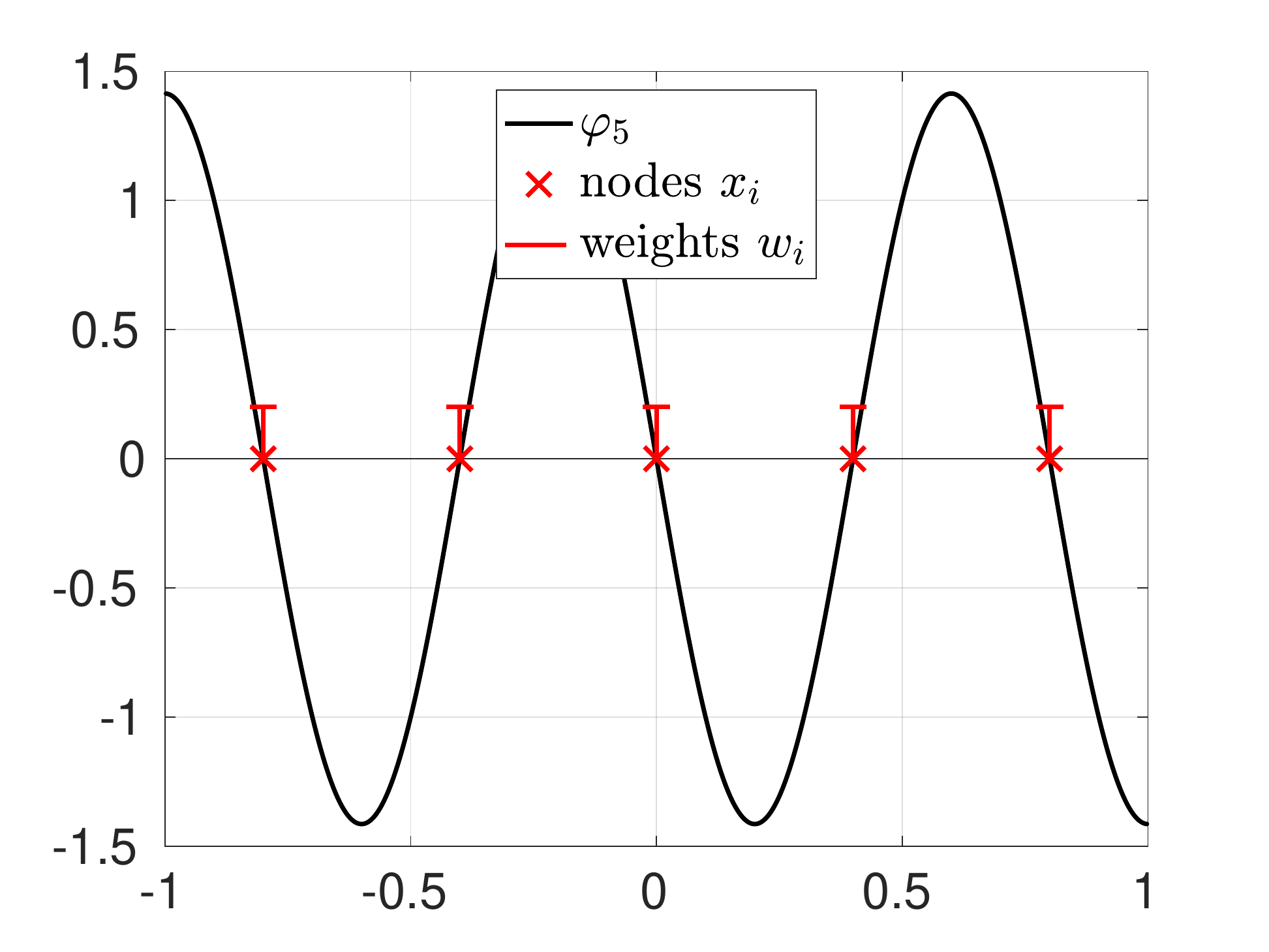}}
    \caption{Poincar\'e quadrature for the uniform distribution on $(0,1)$, with $n=3$ nodes (left) and $n=5$ nodes (right). The curve represents the Poincar\'e basis function with $n$ roots, and the red crosses and lines the quadrature nodes and weights obtained by the numerical procedure.}
\end{figure}

Although the numerical computation of the Poincar\'e basis has been found to be accurate (see above in this section), we also investigate its influence by replacing the exact expression of the Poincar\'e basis in the previous experiments by its numerical approximation. The results are almost the same (the difference is in the order of $10^{-12}$), showing that the whole procedure gives accurate results for the uniform distribution.\\

\subsection{Further properties of Poincar\'e quadratures} 

Empirically, we find that the nodes and weights of a $n$-point Poincar\'e quadrature have the following properties, independently of the density:
\begin{enumerate}
    \item The nodes are almost -- but in general not exactly -- equal to the zeros of $\varphi_n$. The difference is not due to numerical error. It is present even if the basis functions are known analytically, such as in the case of the truncated exponential.
    \item The nodes are almost evenly spaced, but slightly skewed towards the concentration of probability mass.
    \item The weights mimic the shape of the probability density function. 
\end{enumerate}
These three observations are illustrated in Fig.~\ref{fig:nodes_weights} for the truncated exponential distribution and for a nonparametric density.

\begin{figure}[htbp]
\centering
\subfloat[Truncated exponential ($n = 8$)]
{\includegraphics[width=.49\textwidth]{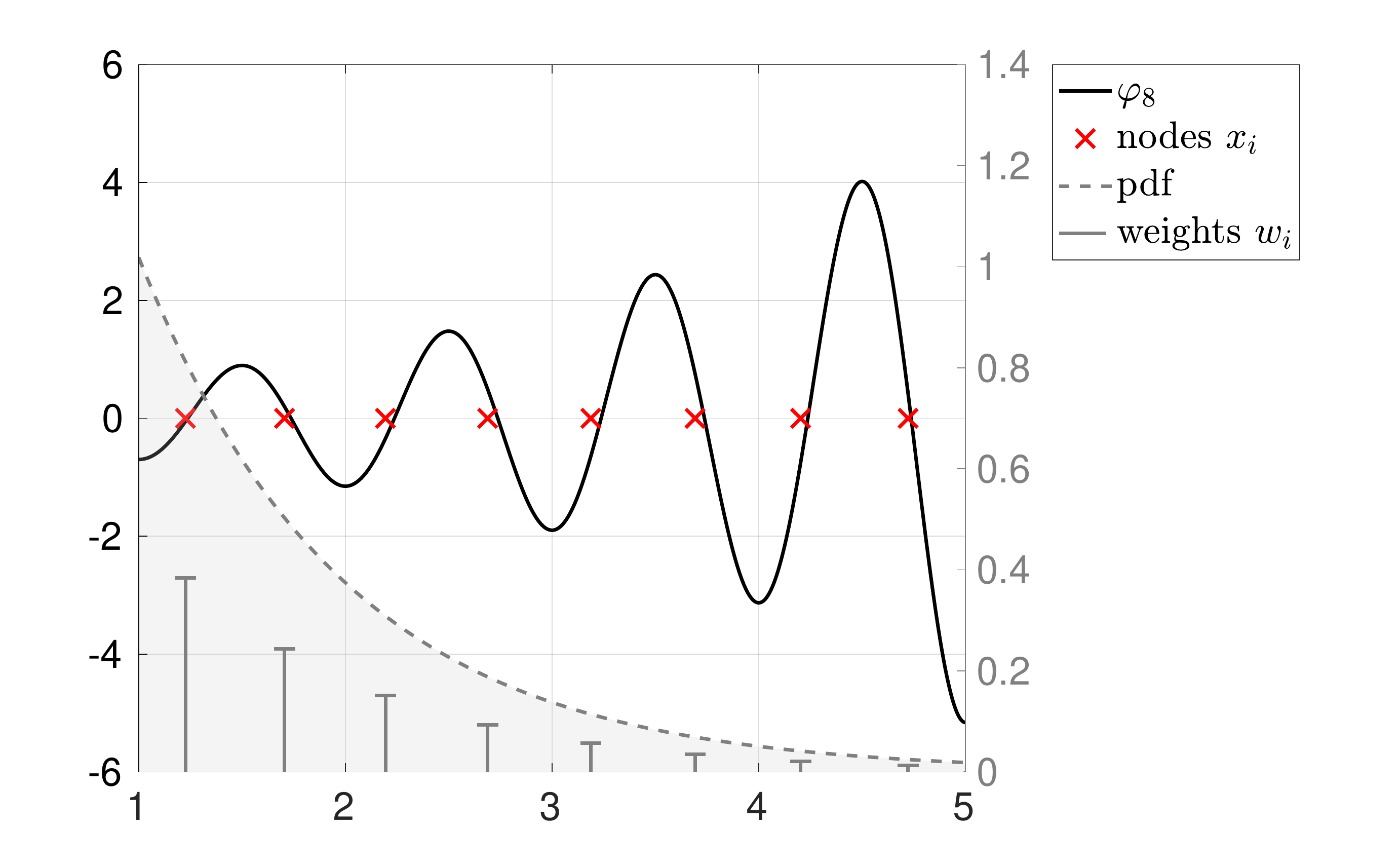}}
\hfill
\subfloat[Nonparametric density ($n = 8$)]
{\includegraphics[width=.49\textwidth]{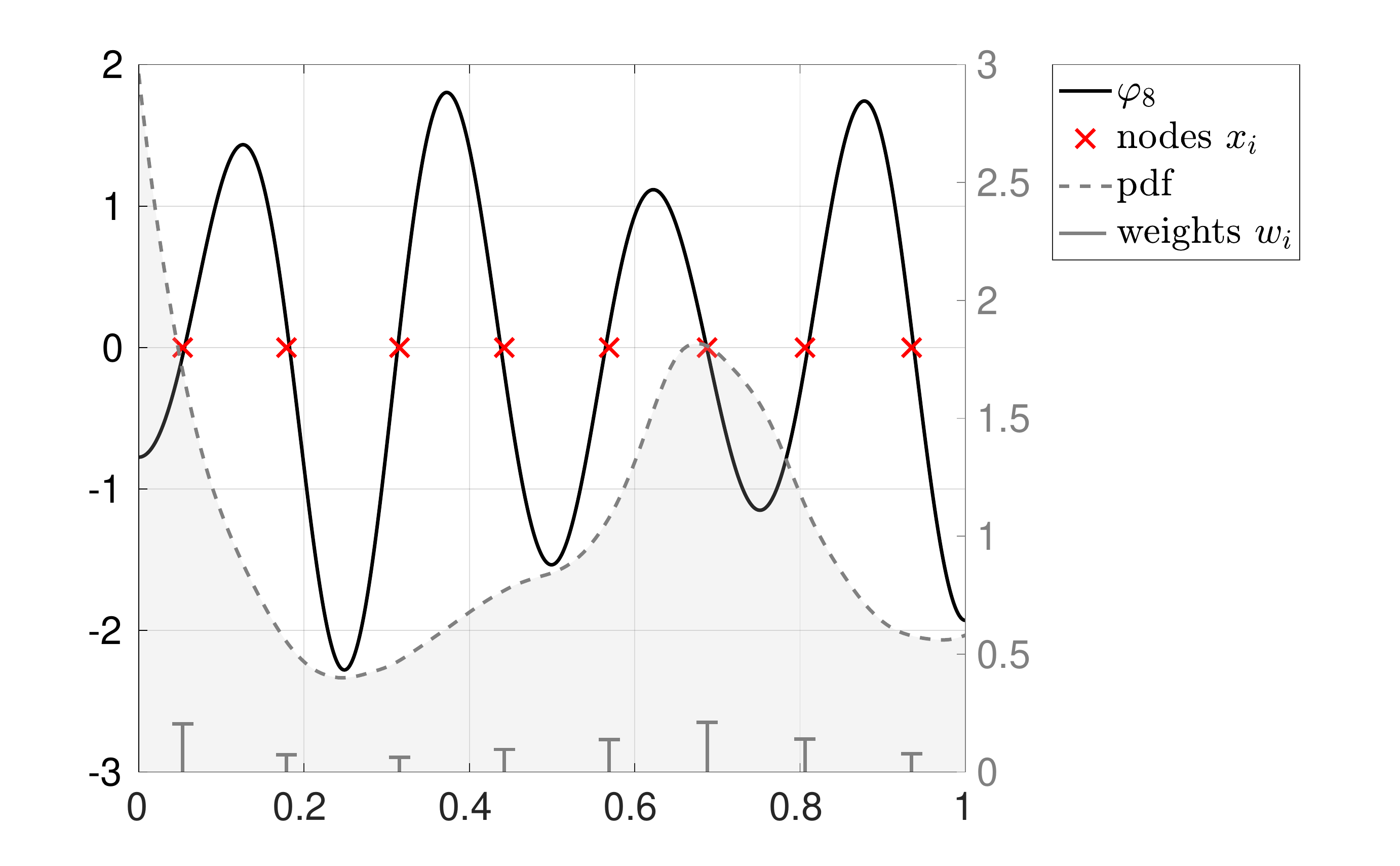}\label{fig:nodes_weights_random}}
\caption{Poincar\'e nodes and weights for the truncated exponential distribution (interval $[1,5]$, left) and a nonparametric density (right). Note that each plot has two $y$-axes with different scaling: the left one (in black) is for the basis function $\varphi_n$ and for the nodes $x_i$, while the right one (in gray) is used for the probability density and the weights $w_i$.}
\label{fig:nodes_weights}
\end{figure}

\paragraph{Experiments with random densities.}
\label{sec:quantization_experiments}
To better understand the properties of the Poincar\'e quadrature, we investigate it for a number of randomly generated continuous probability distributions. Their probability density functions (pdfs) are generated as follows.
On the interval $[0,1]$, we sample independent realizations of a Gaussian process with mean zero and Matern-$\frac{5}{2}$ covariance kernel with parameter $\theta = 0.3$. 
Denote one such realization with $g(x)$. Then we define a pdf by $\exp(g(x))$ (up to a normalization constant).
In case the minimal value on $[0,1]$ is smaller than $0.05$, we reject this pdf, in order to avoid numerical issues (recall that $\mu$ must be a bounded perturbation of the uniform distribution, and thus its pdf does not vanish on the support interval). 
One example together with its Poincar\'e quadrature for $n=8$ is shown in Fig.~\ref{fig:nodes_weights_random}.
A set of 100 such pdfs is visualized in Figure~\ref{fig:random_densities_illustration}. As expected, the configurations are various, and often provide multimodal pdfs.

\paragraph{Ratio of pdf and weights.}
To further investigate the second and third observations mentioned above, namely, that the weights of the Poincar\'e quadrature mimic the associated density, we analyse the nodes and weights associated to 100 random densities.

In Fig.~\ref{fig:random_densities_nodes}, we display boxplots of the locations of the $n=5$ nodes. We see that the nodes are nearly evenly spaced, which would correspond to the locations $(0.1, 0.3, 0.5, 0.7, 0.9)$. 

Furthermore, in Fig.~\ref{fig:random_densities_weights} we display boxplots of the ratio $\frac{n w_i}{\rho(x_i)}$ for the $n=5$ quadrature nodes, where the weights are scaled by $n$ for convenience. As already guessed from Fig.~\ref{fig:nodes_weights}, we see that this ratio is quite close to 1.
This suggests that the Poincar\'e quadrature might be a good quantization for the density $\rho$, which we investigate in the following.

\begin{figure}[htb]
\centering
\subfloat[100 randomly generated densities]
{\includegraphics[width=.44\textwidth]{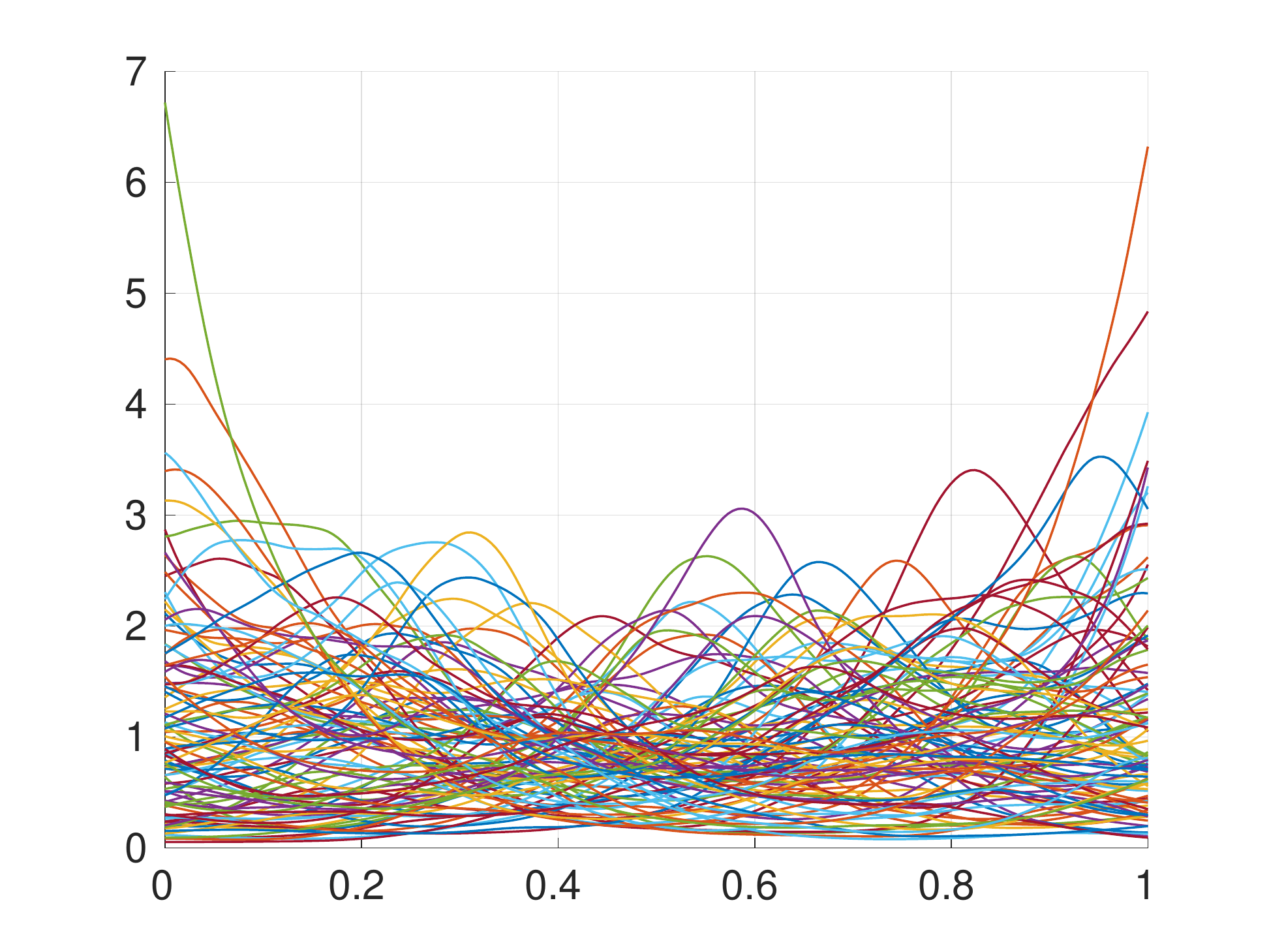}\label{fig:random_densities_illustration}}
\hfill
\subfloat[Distribution of nodes]
{\includegraphics[width=.44\textwidth]{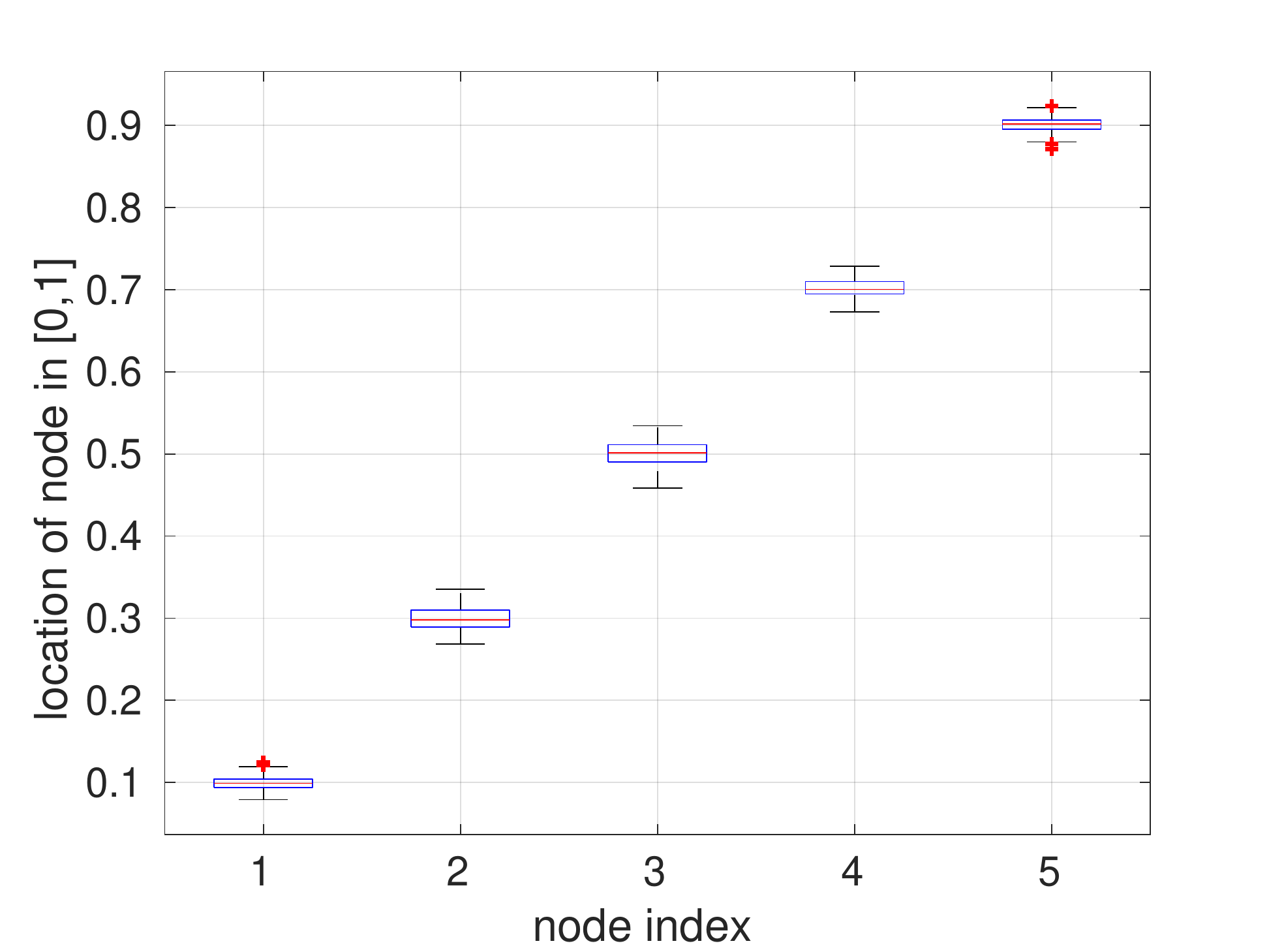}\label{fig:random_densities_nodes}}
\\
\subfloat[Ratio between weights and pdf]
{\includegraphics[width=.44\textwidth]{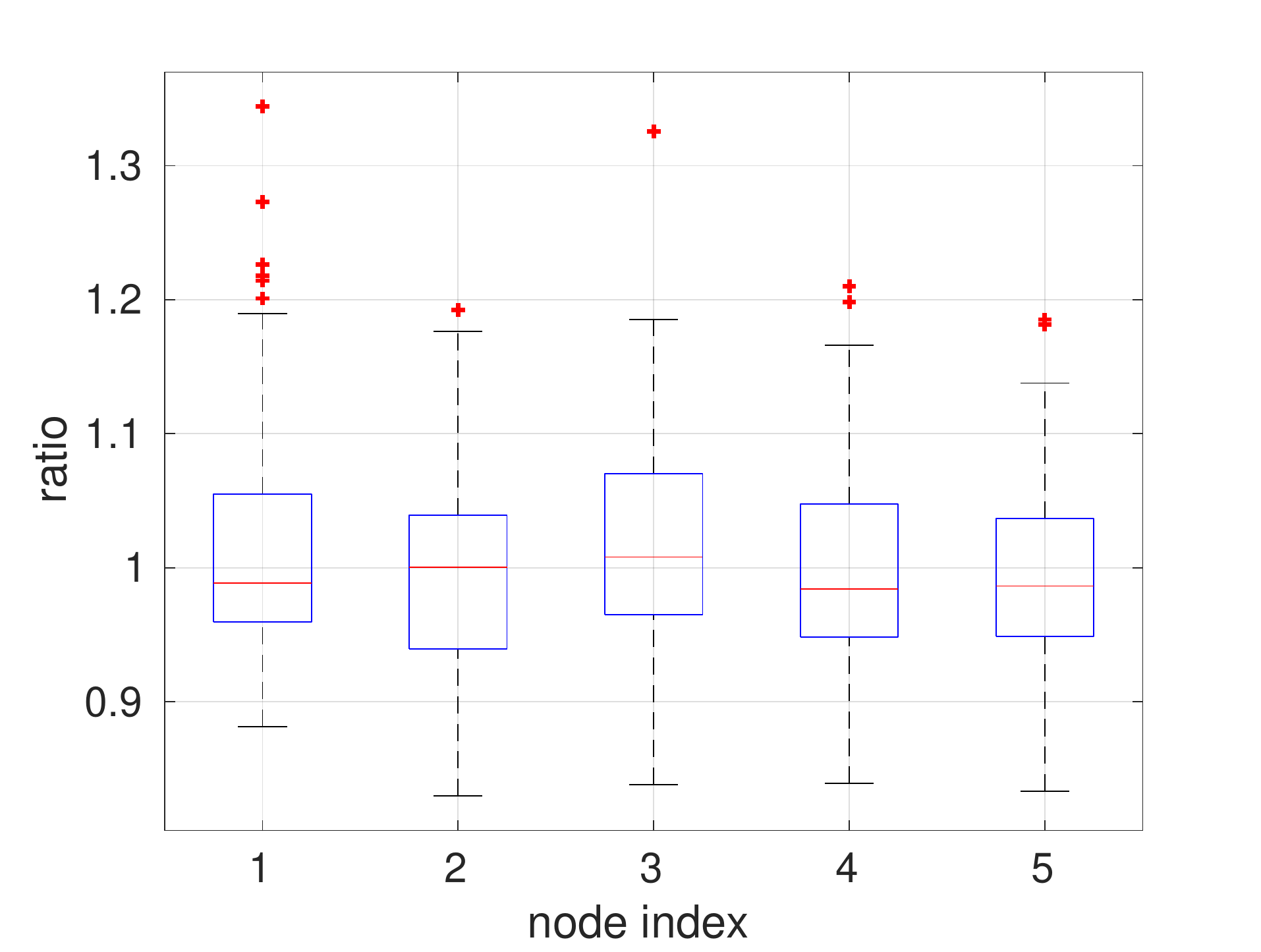}\label{fig:random_densities_weights}}
\caption{Left panel: 100 random pdfs generated by the procedure described in Section~\ref{sec:quantization_experiments}. 
Right panel: Location of nodes for the Poincar\'e quadratures associated to the same densities, with $n=4$.
Bottom panel: Ratio $\frac{w_i n}{\rho(x_i)}$, where $\rho$ is the pdf associated to $\mu$.}
\label{fig:random_densities1}
\end{figure}

\paragraph{Wasserstein-optimal quantization.}
It is interesting to compare the Poincar\'e quadrature to other standard quantization procedures, where quantization means an approximation of a continuous probability distribution by a discrete one.
Here, we will focus on the optimal quantization associated to the Wasserstein distance, called Wasserstein-optimal quantization. The Wasserstein distance between two cumulative density functions (cdf) $F, G$ is defined by
\begin{equation}
    W(F, G) = \left( \int_0^1 (F^{-1}(p) - G^{-1}(p))^2 dp \right)^{1/2}. 
\end{equation}
Then, the corresponding optimal quantization with $n$ points is the discrete probability distribution that has the smallest Wasserstein distance to the density associated to the measure $\mu$. It can be computed efficiently using Lloyd's algorithm \citep{Graf2007}.

For each random pdf, for a fixed number of nodes $n=5$, we compute the Poincar\'e quadrature, the standard Gaussian quadrature (associated to ordinary polynomials), and the Wasserstein-optimal quantization. We report the location of the nodes, as well as the zeros of $\varphi_n$, in Figure~\ref{fig:random_densities_locations}.
We observe that the Poincar\'e nodes are quite evenly spaced with small variability, and close (but not equal) to the zeros of the Poincar\'e basis function $\varphi_n$ (denoted by red crosses). 
Furthermore, the Poincar\'e nodes are more evenly spaced than the support points of the Wasserstein-optimal quantization (denoted by yellow triangles).
Finally, we observe that the Gaussian nodes are more spread out than the Poincar\'e nodes: the outermost Gaussian nodes are closer to the boundary than the outermost Poincar\'e nodes.

\begin{figure}[htb]
    \centering
     \subfloat[Locations of nodes and zeros]{\includegraphics[width=.55\textwidth, trim = 0 1.5cm 0 1.8cm, clip]{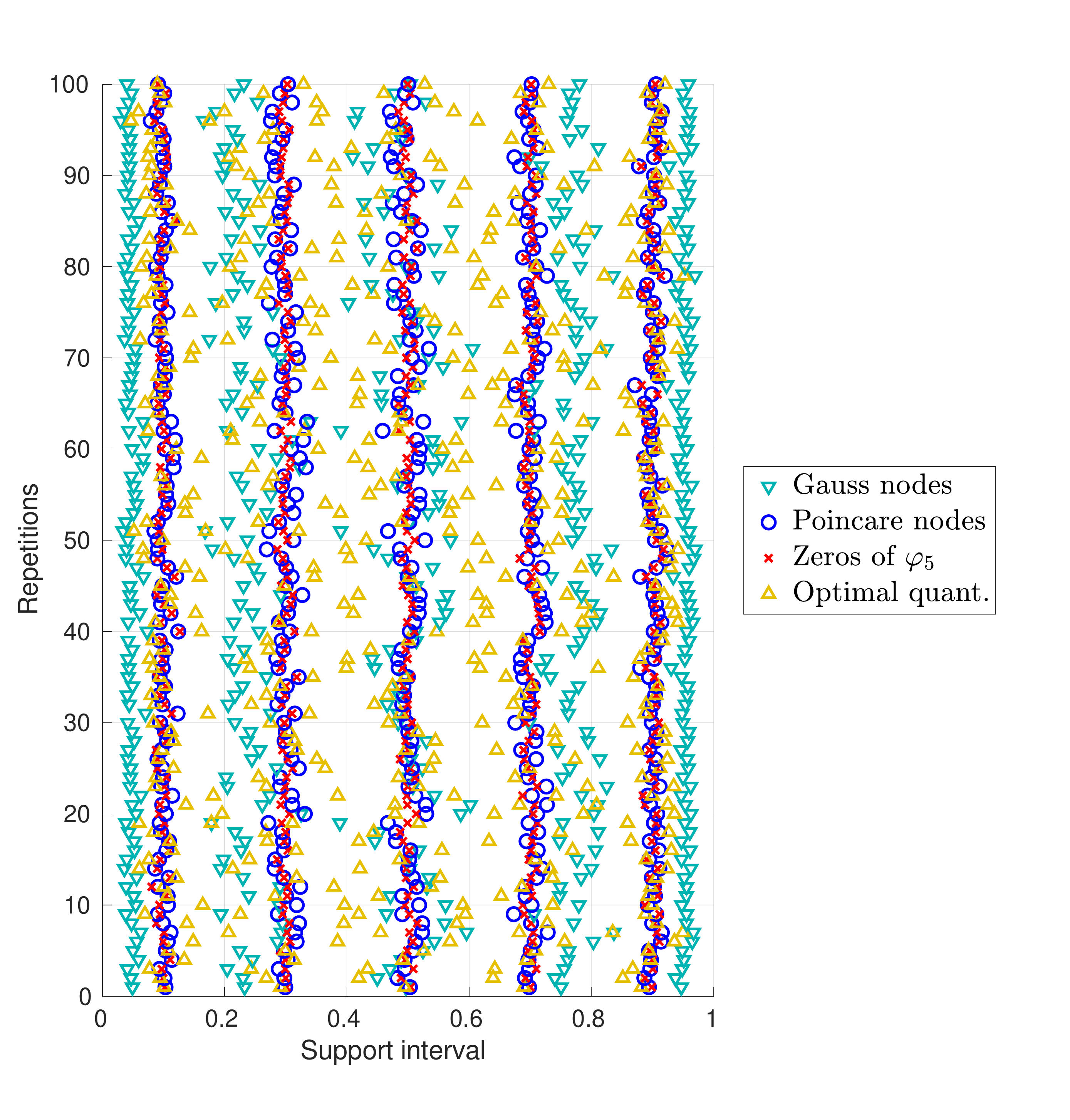} \label{fig:random_densities_locations}}
     \hfill
     \subfloat[Comparison of Wasserstein distances]{\includegraphics[width=.44\textwidth]{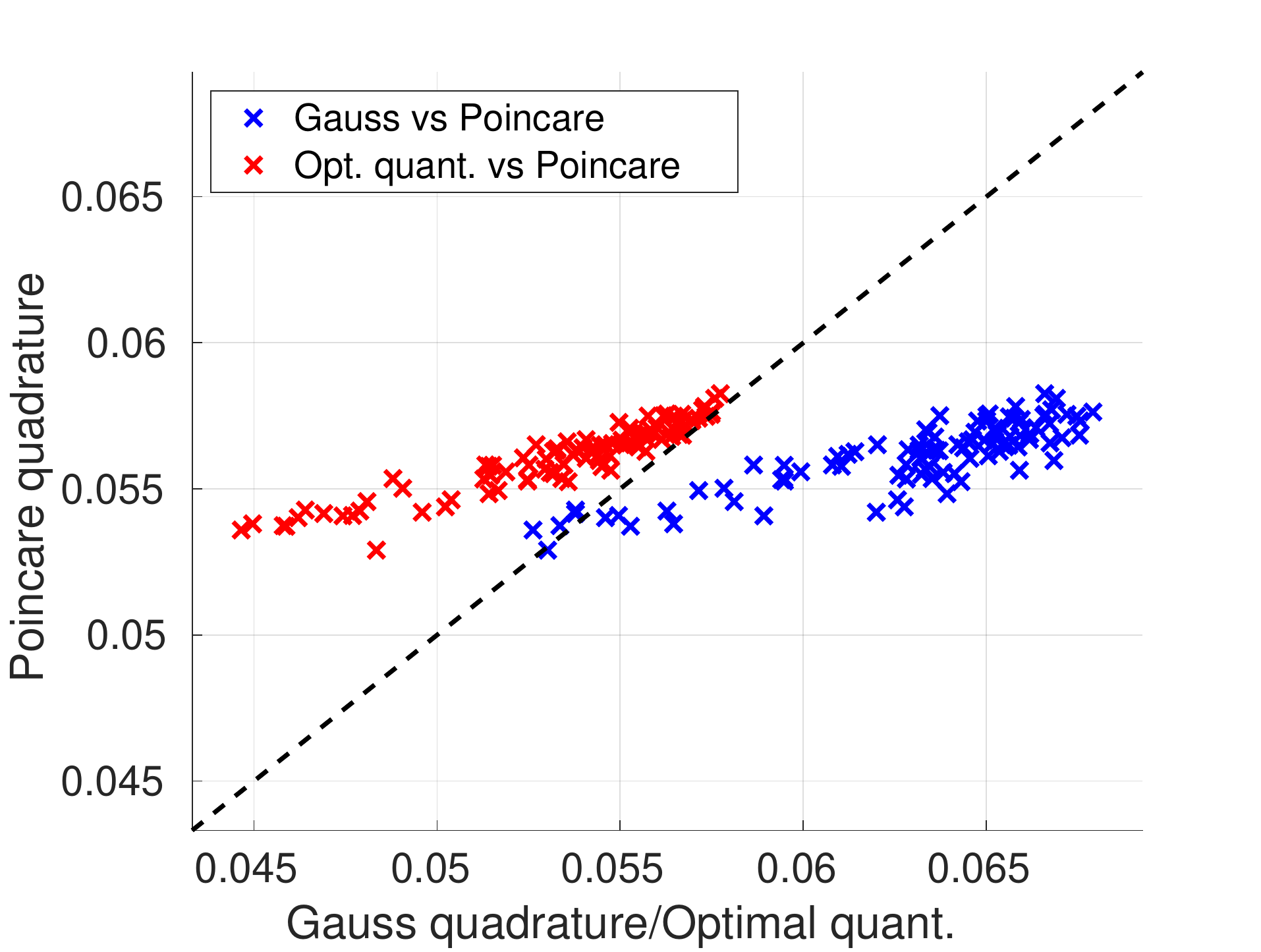}\label{fig:wasserstein_distances}}
    \caption{Analysis of the properties of Poincar\'e quadrature rules with $n=5$ nodes for the random probability distributions displayed in Fig.~\ref{fig:random_densities_illustration}.
    The left panel shows the corresponding nodes for the (usual) Gaussian quadrature and the Poincar\'e quadrature, as well as the support points for the Wasserstein-optimal quantization. The red crosses indicate the zeros of the Poincar\'e basis function $\varphi_n$.
    Right panel: Wasserstein distances between the continuous probability distribution $\mu$ and three different quadrature rules: Gaussian, Poincar\'e, and Wasserstein-optimal. Each blue point corresponds to a random density and shows the Wasserstein distances of Gaussian vs Poincar\'e quadratures. Similarly, each red point corresponds to a random density and shows the Wasserstein distances of Wasserstein-optimal vs Poincar\'e quadratures. In this way, each random density corresponds to two points in the plot. The black dashed line visualizes the identity $x = y$.
}
    \label{fig:random_densities}
\end{figure}

To further quantify the comparison, we measure the Wasserstein distance of the standard Gaussian quadrature and the Poincar\'e quadrature to $\mu$. Results with $n=5$ are presented in Figure~\ref{fig:wasserstein_distances}
We see that in most cases, the Poincar\'e quadrature has a smaller Wasserstein distance than the Gaussian one. The optimal quantization is by construction better than both, but often actually not much better than the Poincar\'e quadrature.

%------------------------------------------------------------------
\subsection{Worst-case error}
%------------------------------------------------------------------
We end this section by a brief analysis of the behaviour of the worst-case error $\wce(n)$ as a function of $n$. We restrict ourselves to the probability distributions for which both the kernel $K$ and the Poincar\'e basis are given explicitly, so that the only numerical error comes from the computation of the quadrature. 
We have used Eq.~\eqref{eq:wceMatricial} to compute $\wce(n)$, plugging in the Poincar\'e quadrature for the nodes $X$ and the weights $w$. Alternatively, we could have used formula \ref{eq:wce_H1_2}, which depends only on the nodes.
The curve of the worst-case error is represented in Figure~\ref{fig:wce} for the uniform distribution on $(0, 1)$ and the exponential distribution with parameter $\lambda = 1$ truncated on $(1, 5)$. 
For the uniform distribution, the result is known explicitly (Prop.~\ref{prop:quadError}), and the plot can be viewed as a validation of the numerical procedure. For the two cases, the worst-case error seems to converge at the speed of $n^{-1}$.\\

\begin{figure}[h!]
    \centering
    \includegraphics[width=.48\textwidth]{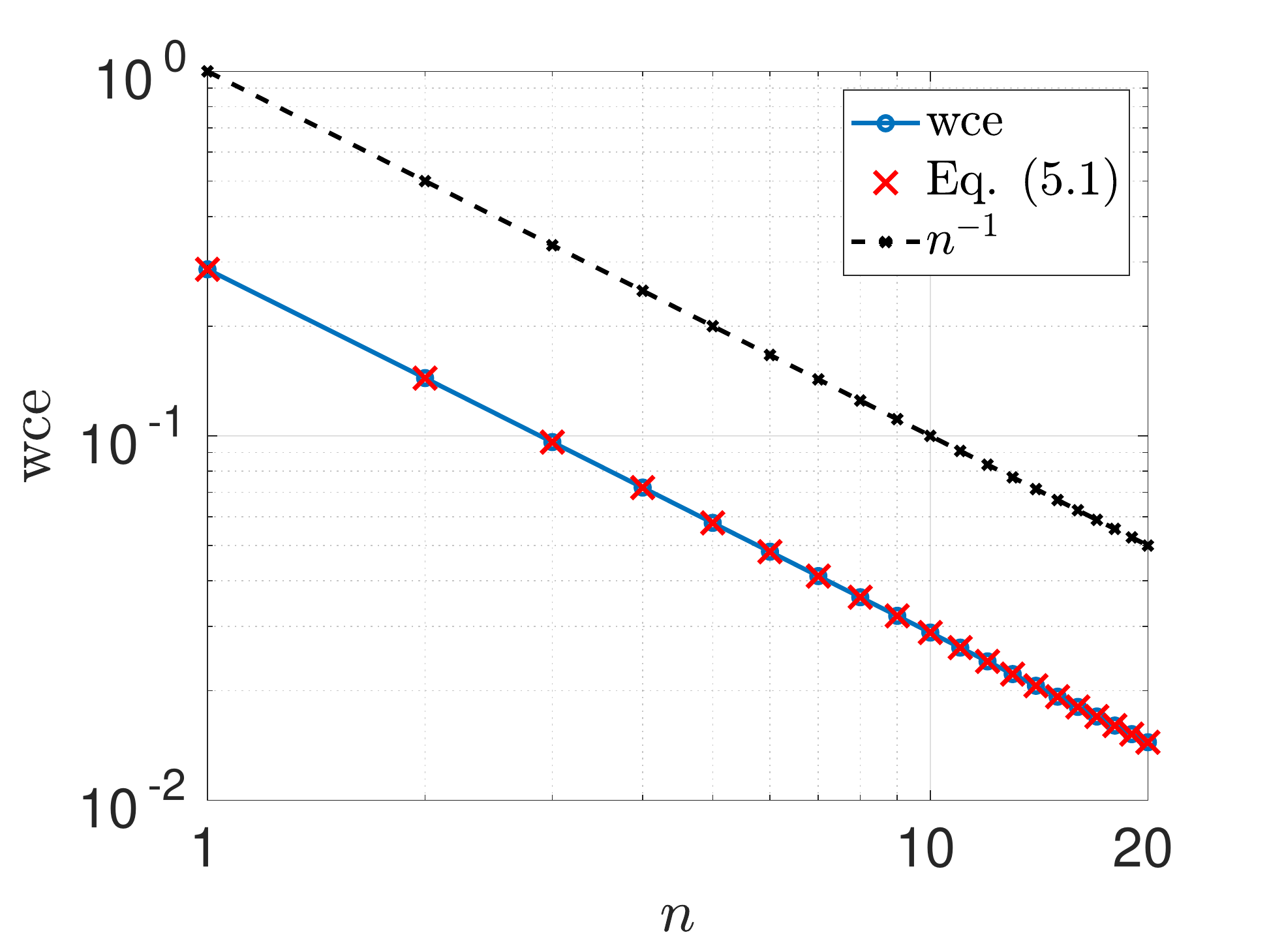} \hfill
    \includegraphics[width=.48\textwidth]{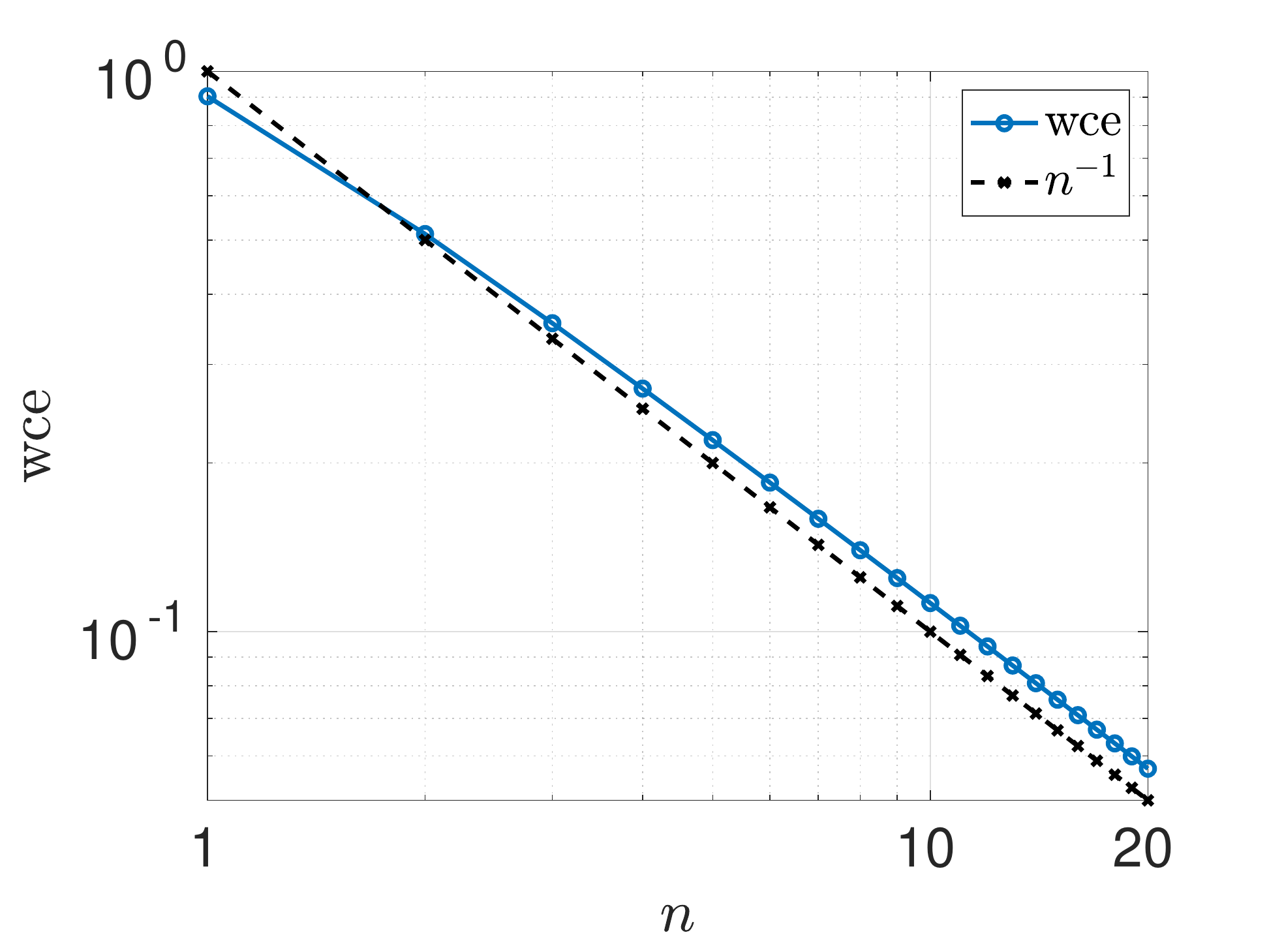} \label{fig:conv_Q_truncexp}
    \caption{Curve of the squared worst-case error $\wce(n)$, for the uniform distribution (left) and the exponential distribution with parameter $\lambda=1$, truncated on $(1,5)$ (right).}
    \label{fig:wce}
\end{figure}

\subsection*{Acknowledgement}
This research was conducted with the support of the consortium in Applied Mathematics CIROQUO, gathering partners in technological and academia in the development of advanced methods for Computer Experiments (\href{https://doi.org/10.5281/zenodo.6581217}{doi:10.5281/zenodo.6581217}) and the LabEx CIMI in the frame of the research project {\it Global sensitivity analysis and Poincar\'e inequalities}. Support from the ANR-3IA Artificial and Natural Intelligence Toulouse Institute is also gratefully
acknowledged. 
\bibliography{bibi.bib}

\end{document}